\newtheorem{theo}{Theorem}[section]
\newtheorem{prop}[theo]{Proposition}
\newtheorem{coro}[theo]{Corollary}
\newtheorem{lemm}[theo]{Lemma}
\theoremstyle{definition}
\theoremstyle{remark}
\newtheorem{rema}[theo]{Remark}
\newcommand{\Op}{\operatorname{Op}}
\newcommand{\nwc}{\newcommand}
\nwc{\eps}{\epsilon}
\nwc{\ep}{\epsilon}
\nwc{\vareps}{\varepsilon}
\nwc{\Oph}{\operatorname{Op}_\hbar}
\nwc{\la}{\langle}
\nwc{\ra}{\rangle}
\nwc{\mf}{\mathbf} 
\nwc{\blds}{\boldsymbol} 
\nwc{\ml}{\mathcal} 
\nwc{\defeq}{\stackrel{\rm{def}}{=}}
\nwc{\cE}{\ml{E}}
\nwc{\cN}{\ml{N}}
\nwc{\cO}{\ml{O}}
\nwc{\cP}{\ml{P}}
\nwc{\cU}{\ml{U}}
\nwc{\cV}{\ml{V}}
\nwc{\cW}{\ml{W}}
\nwc{\tU}{\widetilde{U}}
\nwc{\IN}{\mathbb{N}}
\nwc{\IR}{\mathbb{R}}
\nwc{\IZ}{\mathbb{Z}}
\nwc{\IC}{\mathbb{C}}
\nwc{\IT}{\mathbb{T}}
\nwc{\IS}{\mathbb{S}}
\nwc{\tP}{\widetilde{P}}
\nwc{\tPi}{\widetilde{\Pi}}
\nwc{\tV}{\widetilde{V}}
\nwc{\supp}{\operatorname{supp}}
\nwc{\rest}{\restriction}
\let \d \relax
\nwc{\d}{\partial}
\nwc{\Cor}{\mathscr{C}}
\nwc{\todo}[1]{$\clubsuit$ {\tt #1}}
\begin{document}

\title[Quantum limits of perturbed sub-Riemannian Laplacians in dimension $3$]{Quantum limits of perturbed sub-Riemannian contact Laplacians in dimension $3$}

\author{V\'ictor Arnaiz}
\address{Laboratoire de Math\'ematiques Jean Leray, Nantes Universit\'e, UMR CNRS 6629, 2 rue de la Houssini\`ere, 44322 Nantes Cedex 03, France}

\email{victor.arnaiz@univ-nantes.fr}

\author{Gabriel Rivi\`ere}

\address{Laboratoire de Math\'ematiques Jean Leray, Nantes Universit\'e, UMR CNRS 6629, 2 rue de la Houssini\`ere, 44322 Nantes Cedex 03, France}

\address{Institut Universitaire de France, Paris, France}

\email{gabriel.riviere@univ-nantes.fr}

\begin{abstract} On the unit tangent bundle of a compact Riemannian surface, we consider a natural sub-Riemannian Laplacian associated with the canonical contact structure. In the large eigenvalue limit, we study the escape of mass at infinity in the cotangent space of eigenfunctions for hypoelliptic selfadjoint perturbations of this operator. Using semiclassical methods, we show that, in this subelliptic regime, eigenfunctions concentrate on certain quantized level sets along the geodesic flow direction and that they verify invariance properties involving both the geodesic vector field and the perturbation term.
\end{abstract}

\maketitle

\section{Introduction}

Let $(M,g)$ be a smooth, compact, oriented, and boundaryless Riemannian \emph{surface} and denote by $K(m)$ its sectional curvature at a given point $m\in M$. The unit tangent bundle of $M$ is defined by 
$$\mathcal{M}:=SM=\left\{q=(m,v)\in TM:\ \|v\|_{g(m)}=1\right\}.$$
There are two natural vector fields on $SM$: the geodesic vector field $X$ and the vertical vector field $V$, \emph{i.e.} the vector field corresponding to the action by rotation in the fibers of $SM$. One can then define $X_\perp:=[X,V]$ and these vector fields verify the following commutation relations~\cite[\S3.5.1]{PaternainSaloUhlmann22}:
\begin{equation}\label{e:commutators}
 [X,X_\perp]= -KV,\quad [X,V]=X_\perp,\quad\text{and}\quad [X_\perp,V]=-X,
\end{equation}
where $K$ is understood as a function on $SM$ (via pullback). The manifold $\mathcal{M}$ is naturally endowed with a Riemannian metric $g_S$ (the Sasaki metric) which makes $(X,X_\perp,V)$ into an orthonormal basis. The corresponding volume form that we will denote by $d\mu_L$ makes these three vector fields divergence free and we can define the sub-Riemannian Laplacian associated with this geodesic frame:
$$-\Delta_{\text{sR}}:=X_\perp^*X_\perp+V^*V=-X_\perp^2-V^2.$$
More precisely, we consider the Friedrichs extension of this formally selfadjoint operator (see Appendix~\ref{a:spectral} for a brief reminder) which is hypoelliptic by H\"ormander's Theorem~\cite[Th.~1.1]{Hormander67}. In the context of contact geometry, $-\Delta_{\text{sR}}$ is referred as the Rumin Laplacian for the Sasaki metric~\cite{Rumin1994}. See also~\S\ref{ss:general-contact} for a discussion on the case of general H\"ormander (contact) operators in dimension $3$.

We now let $Q, W\in\ml{C}^{\infty}(\ml{M},\IR)$. Our goal is to study, in the semiclassical limit $h\rightarrow 0^+$, the eigenfunctions of the following (formally selfadjoint) operator:
\begin{equation}
\label{e:main_operator}
\widehat{P}_h:=-h^2\Delta_{\text{sR}}-ih^2QX -\frac{ih^2X(Q)}{2}+W, \quad h \in (0,1].
\end{equation}
Under the assumption $\|Q\|_{\ml{C}^0}<1$, one can again consider the Friedrichs extension of this operator and, from the Rothschild-Stein Theorem~\cite[Th.~16]{RothschildStein76}, this still defines an \emph{hypoelliptic} operator. Combining this last Theorem with classical tools from spectral theory~\cite{ReedSimon80, ReedSimon75}, one can find $h_0 > 0$ such that, for all $0<h\leq h_0$, there exists a nondecreasing sequence 
$$\text{min}\ W+\ml{O}_Q(h)\leq \lambda_h(0)\leq \lambda_h(1)\leq\ldots \leq \lambda_h(j)\ldots \rightarrow +\infty,\quad\text{as}\quad j\rightarrow +\infty,$$ 
and an orthonormal basis $(\psi_h^j)_{j\geq 0}$ of $L^2(\mathcal{M})$ verifying, for all $j\geq 0$,
\begin{equation}\label{e:semiclassical-basis}
 \widehat{P}_h \, \psi_h^j =\lambda_h(j) \,\psi_h^j. 
\end{equation}
We refer to Lemma~\ref{l:spectrum} in Appendix~\ref{a:spectral} for details. Moreover, any solution $\psi_h^j$ to this eigenvalue problem belongs to the space $\ml{C}^{\infty}(\ml{M})$ and, thanks to Lemma~\ref{l:apriori-estimate}, it satisfies the a priori estimate for $h>0$ small enough:
$$\|hX_\perp\psi_h^j\|_{L^2}^2+\|hV\psi_h^j\|_{L^2}^2+\|h^2X\psi_h^j\|_{L^2}^2\leq C_{Q,W}(1+|\lambda_h(j)|)^2,$$
where $C_{Q,W}>0$ is a constant depending only on $(Q,W)$. Here, the fact that there is a factor $h^2$ in front of the derivatives of $X$ is a manifestation of the lack of ellipticity of the operator along the $X$ direction. In the following, we aim precisely at analyzing the structure of the eigenfunctions in the subelliptic regime where formally speaking one has $h^{-1}\ll |X|\lesssim h^{-2}$.

\subsection{Quantum limits and semiclassical measures}
We are interested in describing the asymptotic properties of the semiclassical eigenmodes satisfying\footnote{All along the article, we use the standard conventions from semiclassical analysis to write $h\rightarrow 0^+$ instead of writing a sequence $h_n\rightarrow 0$ as $n\rightarrow\infty.$}:
\begin{equation}\label{e:semiclassical-eigenvalue}
 \widehat{P}_h \, \psi_h =\lambda_h \, \psi_h,\quad\|\psi_h\|_{L^2}=1,\quad\lambda_h\rightarrow\lambda_0\in\IR,\ \text{as}\ h\rightarrow 0^+. 
\end{equation}
When $W\equiv 0$, a natural choice is to pick $\lambda_h=1$ that would correspond to studying the large eigenvalue limit for the hypoelliptic operator 
$\mathcal{L}=\Delta_{\text{sR}}+iQX +\frac{iX(Q)}{2}$. Yet, as we want to emphasize the semiclassical nature of this spectral problem, we keep a general $W$ and thus some general value $\lambda_0\geq\min W$. Still from Lemma~\ref{l:apriori-estimate}, one finds that, for any sequence $\lambda_h\rightarrow \lambda_0$, there exists some $h_0>0$ such that, for all $0<h\leq h_0$ and for any solution to~\eqref{e:semiclassical-eigenvalue},
\begin{equation}\label{e:apriori-semiclassical}
 \|hX_\perp\psi_h\|_{L^2}^2+\|hV\psi_h\|_{L^2}^2+\|h^2X\psi_h\|_{L^2}^2\leq C_{Q,W}(1+2|\lambda_0|)^2.
\end{equation}

One says that a probability measure $\nu$ is a \emph{quantum limit} for this spectral problem if, for every $a\in\ml{C}^0(\ml{M})$, 
$$\lim_{h\rightarrow 0^+}\int_{\ml{M}}a|\psi_h|^2d\mu_L=\int_{\ml{M}}a \, d\nu,$$
where $(\psi_h)_{h\rightarrow 0^+}$ is a sequence verifying~\eqref{e:semiclassical-eigenvalue}. Up to extraction of a subsequence, one can always find such an accumulation point. Given $\lambda_0\geq\min W$, we denote by $\ml{N}_{\lambda_0}$ the set of quantum limits associated with the spectral parameter $\lambda_0$. In view of describing the regularity properties of $\nu$, one lifts the problem to the cotangent bundle $T^*\ml{M}$ by introducing
$$w_h:a\in\ml{C}^{\infty}_c(T^*\ml{M})\mapsto\langle \Op_h(a)\psi_h,\psi_h\rangle_{L^2},$$
where $\text{Op}_h(a)$ is a $h$-pseudodifferential operator with principal symbol $a$~\cite[Th.~14.1]{Zworski12} and $(\psi_h)_{h\rightarrow 0^+}$ is the sequence used to generate $\nu$. Thanks to the Calder\'on-Vaillancourt Theorem~\cite[Th.~5.1]{Zworski12}, $(w_h)_{h\rightarrow 0^+}$ is a bounded sequence in $\ml{D}^\prime(T^*\ml{M})$. Hence, up to extraction, it converges to some limit $w$ which is referred as a semiclassical measure for the sequence $(\psi_h)_{h\rightarrow 0^+}$. The theory of semiclassical pseudodifferential operators allows to prove that any such $w$ is a \emph{finite nonnegative measure} on $T^*\ml{M}$ that is supported on
$$\ml{E}^{-1}(\lambda_0):=\{(q,p)\in T^*\ml{M}: \mathcal{E}(q,p) := H_2(q,p)^2+H_3(q,p)^2+W(q)=\lambda_0\},$$
and that satisfies the following invariance property
$$\left\{H_2^2+H_3^2+W,w\right\}=0,$$
where $$H_2(q,p):=p(X_\perp),\quad\text{and}\quad H_3(q,p):=p(V).$$
See for instance~\cite[\S5.2]{Zworski12} for proofs of these facts in the case of $\IR^{2d}$. We emphasize that, contrary to the case of eigenvalue problems of elliptic nature, the energy layer $\ml{E}^{-1}(\lambda_0)$ is not compact and there may be some escape of mass at infinity. In particular, $w$ could be equal to $0$. See for instance~\S\ref{s:torus} for concrete examples in the case of the flat torus. Due to this escape of mass at infinity, it is natural to study the measure
$$\nu_\infty:=\nu-\pi_*w,$$
where $\pi:(q,p)\in T^*\ml{M}\rightarrow q\in \ml{M}$, and this is the main purpose of the present work.

\subsection{Decomposition of the measure $\nu_\infty$ and invariance properties}

In~\cite[Thm. B]{ColindeVerdiereHillairetTrelat15}, Colin de Verdi\`ere, Hillairet and Tr\'elat proved that $X(\nu_\infty)\equiv0$ when $Q\equiv0$ and $W\equiv 0$. 
Our results generalize this Theorem in two directions. First, we will provide a refined description of $\nu_\infty$, showing that the measure $\nu_\infty$ decomposes into a discrete sum of non-negative Radon measures covering different asymptotic regimes $h^{-1}\ll |X|\lesssim h^{-2}$ across the non-compact part of $\mathcal{E}^{-1}(\lambda_0)$. Second, we will prove that each of these measures satisfies a new invariance property, different from each-other, as soon as $\nabla(Q)$ does not vanish. In view of formulating our results, we associate to each smooth function $f$ on $\ml{M}$ a natural vector field lying in the contact plane $D:=\text{Span}(X_\perp,V)$ given by
$$\Omega_f:=V(f)X_\perp-X_\perp(f)V.$$
Our main Theorem then reads:
\begin{theo}
\label{t:simplified_theorem}
Let $Q,W\in\ml{C}^\infty(\ml{M},\IR)$ such that $\|Q\|_{\ml{C}^0}<1$, let $\lambda_0 > \max_{q \in \mathcal{M}} W(q)$ and set
\begin{equation}\label{e:good-vectorfield}
 Y_W:=X+\Omega_{\ln(\lambda_0-W)}.
\end{equation}
Then, for every $\nu\in\ml{N}_{\lambda_0}$, the measure $\nu_\infty$ decomposes as 
\begin{equation}
\label{e:decomposition_nu}
\nu_\infty = \overline{\nu}_\infty + \sum_{k=0}^\infty \big( \nu^+_{k,\infty} + \nu_{k,\infty}^- \big),
\end{equation}
where $\overline{\nu}_\infty$ and $\nu_{k,\infty}^\pm$ are  non-negative Radon measures on $\mathcal{M}$ verifying, for all $a\in\ml{C}^1(\ml{M})$ and for all $k\in\mathbb{Z}_+$,
\begin{equation}\label{e:invariance-base}
\int_{\ml{M}}Y_W(a) \, d\overline{\nu}_\infty=0,\quad\text{and}\quad\int_{\ml{M}}Y_{W,Q,k}^\pm(a) \, d\nu_{k,\infty}^\pm=0,
\end{equation}
with 
$$Y^\pm_{W,Q,k} := \big( \pm (2k + 1) + Q  \big) Y_W-\Omega_Q.$$
\end{theo}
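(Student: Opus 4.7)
The plan is to perform a second microlocal analysis along the characteristic cone
$$\Sigma:=\{(q,p)\in T^*\ml{M}\setminus 0:\,H_2(q,p)=H_3(q,p)=0\},$$
following the strategy initiated in~\cite{ColindeVerdiereHillairetTrelat15}. First I show that the escape of mass is concentrated near $\Sigma$: outside any conic neighborhood of $\Sigma$, $-h^2\Delta_{\text{sR}}$ is semiclassically elliptic on $\{|p|\gtrsim h^{-1}\}$, and combined with~\eqref{e:apriori-semiclassical} a parametrix argument shows that the microlocal mass of $\psi_h$ there is carried at bounded momenta and captured by $\pi_*w$. Consequently $\nu_\infty$ lives in an arbitrarily small conic neighborhood of $\Sigma$, which naturally splits into two sheets $\Sigma_\pm$ according to $\mathrm{sgn}(H_1)=\pm 1$; this accounts for the $\pm$ indexing in~\eqref{e:decomposition_nu}.

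Near each $\Sigma_\pm$ the identity $[hX_\perp,hV]=-h^2X$ is the key algebraic ingredient: at the second microlocal scale $h^2|H_1|\sim 1$, the pair $(hX_\perp,hV)$ is an effective Weyl pair with Planck constant $\tilde h=(h^2|H_1|)^{-1}$, and the transverse operator $-h^2(X_\perp^2+V^2)$ admits a Landau-type decomposition with eigenvalues $(2k+1)h^2|H_1|$, $k\in\IN$. I construct microlocal projectors $\Pi_k^\pm$ onto the $k$-th Landau level on $\Sigma_\pm$, a complementary projector $\overline{\Pi}$ for the subcritical regime $h^2|H_1|\to 0^+$ (where no specific Landau index is pinned and $k$ must diverge), and a cutoff $\chi_\infty$ at infinity in $|H_1|$. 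The a priori bound $\|h^2X\psi_h\|_{L^2}\lesssim 1$ ensures summability in $k$, and the weak-$*$ limits
$$\int_{\ml{M}}a\,d\nu_{k,\infty}^\pm := \lim_{h\to 0^+}\langle\Op_h(a)\Pi_k^\pm\chi_\infty\psi_h,\psi_h\rangle_{L^2},\qquad \int_{\ml{M}}a\,d\overline{\nu}_\infty := \lim_{h\to 0^+}\langle\Op_h(a)\overline{\Pi}\chi_\infty\psi_h,\psi_h\rangle_{L^2},$$
taken along suitable subsequences, deliver the decomposition~\eqref{e:decomposition_nu}.

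The invariance~\eqref{e:invariance-base} follows from the identity $\langle[\widehat{P}_h,A_h]\psi_h,\psi_h\rangle=0$ applied to $A_h=\Op_h(a)\Pi_k^\pm\chi_\infty$ (respectively $A_h=\Op_h(a)\overline{\Pi}\chi_\infty$). The effective symbol of $\widehat{P}_h$ reduced on the $k$-th Landau level of $\Sigma_\pm$ is, to leading order,
$$\sigma_k^\pm(q,H_1)=\bigl((2k+1)\pm Q(q)\bigr)h^2|H_1|+W(q),$$
so on $\supp(\nu_{k,\infty}^\pm)$ the constraint $\sigma_k^\pm=\lambda_0$ fixes $h^2|H_1|=(\lambda_0-W)/((2k+1)\pm Q)$. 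The reduced Hamiltonian dynamics of $\sigma_k^\pm$, obtained after symplectic reduction by the transverse $S^1$-action underlying the Landau decomposition, projects onto $\ml{M}$ as follows: the $|H_1|$-derivative contributes the longitudinal part $\pm((2k+1)\pm Q)X$; substituting the quantized value $h^2|H_1|=(\lambda_0-W)/((2k+1)\pm Q)$ into the Berry-type connection form on the Landau bundle (whose horizontal part is the gradient of $\ln|H_1|$) produces the $\Omega_{\ln(\lambda_0-W)}$ correction that converts $X$ into $Y_W$; finally, the horizontal gradient of $Q$, inherited from the $h^2Q|H_1|$ term together with the subprincipal $X(Q)$ symmetrization in~\eqref{e:main_operator}, yields the $-\Omega_Q$ correction. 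Collecting terms gives exactly $Y^\pm_{W,Q,k}=(\pm(2k+1)+Q)Y_W-\Omega_Q$. For $\overline{\nu}_\infty$, the averaging inherent in the subcritical regime $k\to\infty$ cancels the $k$- and $Q$-dependent contributions, leaving only the pure $Y_W$-invariance already recovered in~\cite{ColindeVerdiereHillairetTrelat15} when $Q=W=0$.

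The main obstacle is constructing a second microlocal pseudodifferential calculus adapted to the Landau decomposition which remains effective uniformly across the whole subelliptic window $h^{-1}\ll|H_1|\lesssim h^{-2}$, and executing the commutator computation with enough precision to identify the Berry-type $\Omega_{\ln(\lambda_0-W)}$ and the $-\Omega_Q$ corrections; controlling the subprincipal contributions inherited from the non-commutativity of the frame $(X,X_\perp,V)$, including the curvature term $[X,X_\perp]=-KV$ in~\eqref{e:commutators}, is at the heart of the argument.
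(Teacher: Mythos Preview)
There is a scale error that, as written, breaks the argument. With the paper's convention $H_1(q,p)=p(X)$ one has $[\Op_h^w(H_2),\Op_h^w(H_3)]=ih\Op_h^w(H_1)+\ml{O}(h^2)$, equivalently $\Op_h^w(H_2^2+H_3^2)=A_h^*A_h+h\Op_h^w(H_1)+\ml{O}(h^2)$ with $[A_h,A_h^*]=2h\Op_h^w(H_1)$: the Landau energies are $(2k+1)hH_1$, not $(2k+1)h^2|H_1|$, and the correct second-microlocal variable is $E=hH_1$. The eigenvalue equation then confines $E$ to a compact set (Lemma~\ref{l:0_step_support}), so your ``critical'' regime $h^2|H_1|\sim 1$ (i.e.\ $|H_1|\sim h^{-2}$) is never reached and your dichotomy is misplaced; the actual split is $E\neq 0$ versus $E=0$, that is $|H_1|\sim h^{-1}$ versus $1\ll|H_1|\ll h^{-1}$.

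Even with the scale corrected, your route differs from the paper's, which is deliberately more elementary. For the decomposition (Proposition~\ref{p:support_properties}) the paper builds no Landau projectors $\Pi_k^\pm$ and no operator-valued measures: it applies the ladder identities~\eqref{e:AAstar},~\eqref{e:commutator_A_A*} to the scalar distributions $\varrho^\pm_{k,h}(b)=\langle\Op_h^w(b\,\tilde\chi)A_h^ku_h,A_h^ku_h\rangle$ and shows inductively that their limits are supported in $\{\pm E((2k+1)\pm Q)+W\le\lambda_0\}$, which forces the stratification. For the invariance (Proposition~\ref{p:invariance}) there is no symplectic reduction and no Berry connection: the paper deforms the test function $b\mapsto\mathbf{b}$ and $H_1\mapsto\mathbf{H}_1$ by explicit polynomial corrections in $H_2/H_1,\,H_3/H_1$ (Section~\ref{s:normalform}) so that $\{H_2^2+H_3^2,\mathbf{H}_1\mathbf{b}\}$ becomes negligible in the cone $C_\varepsilon$, and then reads the vector field $\mathcal{X}_{W,Q}$ off the remaining brackets $\{W+hQH_1,\mathbf{H}_1\mathbf{b}\}$; see~\eqref{e:Wignerequation2bis}--\eqref{e:Wignerequation3}. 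The $\Omega_{\ln(\lambda_0-W)}$ term does not come from a connection form but from the algebraic identity $(\lambda_0-W)X+\Omega_{\lambda_0-W}=(\lambda_0-W)Y_W$, after the leading piece $(H_2^2+H_3^2)H_1^{-1}X(b)$ of $\{H_2^2+H_3^2,\mathbf{b}\}$ has been converted via the eigenvalue equation into $(\lambda_0-W-EQ)X(b)$. Your projector approach is in the spirit of~\cite{FermanianFischer21,FermanianLetrouit20} and could in principle be made to work, but it requires building the projector calculus uniformly across $1\ll|H_1|\lesssim h^{-1}$ and justifying the ``Berry-type'' computation rigorously---precisely the machinery the paper sidesteps.
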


\begin{rema}
Condition $\lambda_0 > \max_{q \in \mathcal{M}} W(q)$ ensures that the classical forbidden region is empty. In the case $\min W \leq \lambda_0 \leq \max W$, the support of $\overline{\nu}_\infty$ becomes confined inside the compact set $\mathcal{M}_{\lambda_0,W} := \{ q \in \mathcal{M} \, : \, \lambda_0 - W \geq 0 \}$, while the support of $\nu_{k,\infty}^\pm$ is contained in the open subset $\mathcal{U}_{\lambda_0,W}:= \{ q \in \mathcal{M} \, : \, \lambda_0 - W > 0 \}$. This more general situation is covered by the more precise description of semiclassical measures stated in Theorem \ref{t:maintheo}.
\end{rema}

\begin{rema}
In Section \ref{s:torus}, by working on the flat torus $M = \mathbb{T}^2$, we show examples of sequences $(\psi_h,\lambda_h)$ satisfying \eqref{e:semiclassical-eigenvalue} for which the measures $\overline{\nu}_\infty$ or $\nu_{k,\infty}^\pm$ we construct carry the total mass of $\nu$. 
\end{rema}

Decomposition \eqref{e:decomposition_nu} reflects the stratification of the asymptotic phase-space distribution of a given sequence $(\psi_h,\lambda_h)$ satisfying \eqref{e:semiclassical-eigenvalue}. Indeed, in Section \ref{s:main_results_and_proofs} below, we will provide a more general description of $\nu_\infty$ by lifting the analysis to the phase-space via introducing an adapted semiclassical measure $\mu_\infty$ on  $\mathcal{M} \times \IR$ such that, by projection\footnote{Letting $\mu$ be a finite Radon measure on $\mathcal{M} \times \IR$, the measure $\nu(q) = \int_\IR \mu(q,dE)$ is defined by 
$$
\langle \nu , a \rangle := \int_{\mathcal{M} \times \IR} a(q) d\mu(q,E),
$$
for all $a \in \mathcal{C}^0(\mathcal{M})$.}:
$$
\nu_\infty(q) = \int_\IR \mu_\infty(q,dE).
$$ 
The extra variable $E \in \IR$ parameterizes the phase-space escape of mass along the degenerate direction of $X$ as $h \to 0^+$. We refer to~\eqref{e:full-wigner-H1} below for the explicit construction of the measure $\mu_\infty$ using semiclassical tools and we just give here some informal explanation. Letting $H_1(q,p) = p(X)$, we will study precisely two different asymptotic regimes  generating a splitting of the measure $\mu_\infty $ into two parts 
$$
\mu_\infty = \mathbf{1}_{E \neq 0} \mu_\infty + \mathbf{1}_{E = 0} \mu_\infty
$$ 
of qualitatively different nature:
\begin{itemize}
\item The \textbf{critical subelliptic regime} $h\vert H_1 \vert \asymp 1$, captured by $\mathbf{1}_{E \neq 0} \mu_\infty$, displays a quantum behavior which manifests as a decomposition of this measure into a discrete sum of Radon measures $(\mu^\pm_{k,\infty})_{k\in \mathbb{N}}$ supported on quantized level sets $\mathcal{H}_\pm^{-1}(2k+1) \subset \mathcal{M} \times \IR^*_\pm$ for the energy functions 
\begin{equation}\label{e:preserved-quantity}
\ml{H}_\pm(q,E) := \pm \left( \frac{\lambda_0-W(q)}{E} - Q(q) \right), \quad  (q,E) \in \mathcal{M} \times \IR^*_\pm.
\end{equation}
These measures project on the manifold $\mathcal{M}$ and give the measures $\nu^\pm_{k,\infty}$: 
$$
\nu^\pm_{k,\infty}(q) = \int_\IR \mu_{k,\infty}^\pm (q,dE).
$$
\medskip

\item The \textbf{subcritical subelliptic regime} $1 \ll \vert H_1 \vert \ll h^{-1}$, captured by the measure $\overline{\mu}_\infty = \mathbf{1}_{E = 0} \mu_\infty$, which is supported on $\mathcal{M} \times \{0 \}$. This measure projects on $\mathcal{M}$ so that:
$$
\overline{\nu}_\infty(q) = \int_\IR \overline{\mu}_\infty(q,dE).
$$
\end{itemize} 
Besides this distinction between the different oscillation regimes, our analysis will show the influence of the hypoelliptic perturbations of $-\Delta_{\text{sR}}$ given by \eqref{e:main_operator} in the previous description by obtaning new invariance properties of $\mu_\infty$ in terms of $Q$ and $W$. Among other things, it illustrates that the introduction of the new variable $E=hH_1$ becomes essential for this description even in the non-semiclassical set-up where $W\equiv 0$.

\subsection{More related results and questions}

 The fine analysis of these regimes $h\ll |E|=h|H_1|\lesssim 1$ in the subelliptic region of $T^*\ml{M}$ is reminiscent of the analysis made by Burq and Sun for the semiclassical measures of Baouendi-Grushin operators in~\cite{BurqSun22} (see also~\cite{LetrouitSun20, ArnaizSun22} for related works). More precisely, Theorem \ref{t:maintheo} below can be compared with the results obtained by the first author and Sun in~\cite{ArnaizSun22} where a detailed study of semiclassical measures in the subelliptic regime for quasimodes of the Baouendi-Grushin operator was performed. In these references, the operator is $\partial_x^2+a(x)\partial_y^2$, where $(x,y)\in\IT^2$ and $a(x)\geq 0$ is a smooth function that may vanish at finitely many isolated points (with nondegenerate tangencies). The role of $H_1$ is then played by the cotangent variable $\eta$ that is dual to $y$ and~\cite[\S3]{ArnaizSun22} gives a full description of the eigenmodes in the regime $1\ll |\eta|\lesssim h^{-1}$ through their semiclassical measures. In particular, the invariance of these measures through the vector field $\partial_y$ is shown and it replaces the geodesic vector field $X$ in that context. The result is actually stronger as they introduce operator-valued measures lifting the analogue of the measure $\nu_\infty$ and describe completely these lifted objects involving the full cotangent variables. Here, we only focus on the behaviour along the variables $(q,hH_1(q,p))$. Introducing the full cotangent variables would require some extra and delicate work (especially in the critical regime $h|H_1|\asymp 1$) that is not necessary to prove the results we are aiming at.

In~\cite{ColindeVerdiereHillairetTrelat15}, the hypoelliptic model is closer to ours but this extra variable did not appear in the description of $\nu_\infty$ because of the use of microlocal methods. The reason for introducing this new variable $E=hH_1$ is that, in the regime $E\neq 0$, the term $h^2QX$ is not negligible anymore compared with $-h^2\Delta_{\text{sR}}$ and it has to be taken into account in the description of the quantum limit. It results in new invariance properties as in Theorem~\ref{t:simplified_theorem}. The fact that the eigenfunctions are localized on specific levels is a manifestation of the fact that our hypoelliptic operators are modeled locally on the $3$-dimensional Heisenberg group (and thus related to the $1$-dimensional harmonic oscillator). More specifically, our proof of this support property will only rely on the fact that the sub-Riemannian Laplacian can be written as
\begin{equation}\label{e:sRLaplacian-harmonic}
\Delta_{\text{sR}}=Z^*Z-iX=ZZ^*+iX,\quad [Z,Z^*]=2iX,
\end{equation}
where $Z=X_\perp+i V$.

This quantization of the level sets can be thought as an analogue in our (non-algebraic) set-up of the decomposition appearing in the results of Fermanian-Kammerer and Fischer~\cite[Th.~1.1, Th.~2.10]{FermanianFischer21}. See also~\cite{FermanianLetrouit20} for related results in the compact setting. In these references, the decomposition of the semiclassical measures along these quantized levels shows up because there is a natural way to diagonalize the sub-Riemannian Laplacian along the elliptic variables. This is exactly where the harmonic oscillator appears in these references and the subelliptic variable $H_1$ corresponds exactly to the center direction of the Lie algebra setting from~\cite{FermanianFischer21, FermanianLetrouit20}. In these works, the proof of this decomposition required the introduction of operator-valued semiclassical measures. In the case of more general contact manifolds, we can also mention the works of Taylor~\cite{Taylor20} regarding the question of microlocal Weyl laws for operator-valued symbols. Here, our proof of these support properties will not rely on the introduction of such analytical objects. It will simply follow from a careful use of the relation~\eqref{e:sRLaplacian-harmonic} where $Z$ and $Z^*$ will play the role of ladder operators, in a similar way to the proof that the eigenvalues of the harmonic oscillator are given by $\{2k+1, k\geq 0\}$. In the general $3$-dimensional contact case, we note that the quantum normal form from~\cite{ColindeVerdiereHillairetTrelat15} as formulated in~\cite[\S6.2]{ColindeVerdiere2023} should in principle allow to get as in~\cite{FermanianFischer21} a natural decomposition of the measure $\nu_\infty$ using the spectral decomposition of the harmonic oscillator. Yet, due to its microlocal nature, it would not distinguish the various subelliptic regimes $1\ll |H_1|\lesssim h^{-1}$ involved in our problem as we are doing here.

As our semiclassical analysis of eigenfunctions for hypoelliptic operators is inspired by the fine analysis performed for the Baouendi-Grushin operator in~\cite{BurqSun22, ArnaizSun22}, it is natural to expect that such results would remain true for similar hypoelliptic perturbations of the Baouendi-Grushin operator. Similarly, the analysis presented here should in principle allow to deal with the controllability of the Schr\"odinger equation as in~\cite{BurqSun22} and with the stabilization of the wave equation as in~\cite{ArnaizSun22}. Yet, this would require more work that is beyond the scope of the present article. Another natural question would be to study more precisely the regularity of quantum limits when the geodesic vector field $X$ enjoys specific dynamical structure, e.g. on Zoll surfaces, on flat tori or on negatively curved surfaces. Among the natural questions to explore is whether one can always find sequences of eigenfunctions concentrating on a given levet set $\mathcal{H}_\pm^{-1}(2k+1)$. Related to this, it would be interesting to describe semiclassical Weyl laws for symbols involving the extra variable $E=hH_1$. In that direction, we refer one more time to~\cite{Taylor20} for microlocal Weyl laws with operator-valued symbols (including the case where $Q$ is not identically equal to $0$) on contact manifolds of dimension $\geq 3$. Finally, these hypoelliptic models are naturally related to semiclassical magnetic Schr\"odinger operators. For instance, in view of the works~\cite{RaymondVuNgoc15, HelfferKordyukovRaymondVuNgoc16, Morin19, Morin20}, it would be natural to compare how the fine structure of eigenfunctions of these models could be understood following the lines of the present work. Recall that rather precise descriptions of the low-energy eigenfunctions were already given via WKB and normal form methods in~\cite{BonthonneauRaymond20, GuedesBonthonneauRaymondVuNgoc21,  GuedesBonthonneauNguyenRaymondVuNgoc21}.

\subsection{A few words on more general sub-Riemannian contact Laplacians in dimension $3$}\label{ss:general-contact}
 
  The simple geometric model considered in this article ensures that we have globally defined vector fields $(X_\perp,V)$ generating the contact structure and that $[X_\perp,X]= KV$ and $[V,X]=-X_\perp$. It makes some aspects of the exposition somewhat lighter (e.g. regarding the normal form procedure) but it is not essential in our analysis. In fact, one would only need to have two locally defined generating vector fields $(X_2,X_3)$ on a $3$-dimensional manifold $\ml{N}$ so that the operator $\Delta_{\text{sR}}$ writes down locally as $X_2^*X_2+X_3^*X_3$ (modulo some order $0$ operator) where the adjoint is taken with respect to a smooth (nonvanishing) volume form and where one has (locally)
\begin{equation}\label{e:contact}T_q\ml{N}=\text{Span}(X_2(q),X_3(q),[X_2,X_3](q)).\end{equation}
This last condition ensures that $D=\text{Span}(X_2,X_3)$ is non-integrable and thus a contact structure. The H\"ormander type condition~\eqref{e:contact} is in fact the only ingredient needed to perform our normal form procedure in Section~\ref{s:normalform}. For the sake of exposition and as geodesic vector fields form a natural and rich family of Reeb vector fields, we refrain from considering the most general case and we focus on the somehow simplest example of contact structure\footnote{Another nice class of examples would be given by the operator $X^2+X_\perp^2$ on negatively/positively curved surfaces.} that is not already in normal form. In fact, we emphasize that, contrary to the flat Heisenberg case~\cite{ColindeVerdiereHillairetTrelat15, FermanianFischer21, FermanianLetrouit20}, the brackets $[X_\perp,X]$ and $[V,X]$ do not identically vanish. This implies that we do not have a nice algebraic structure at hand and that we have to take them into account in our analysis as it is the case in the general contact set-up treated in~\cite{ColindeVerdiereHillairetTrelat15}. In fact, as we shall see below, the way we deal with the normal form procedure slighlty differs from the one in~\cite{ColindeVerdiereHillairetTrelat15} by avoiding an ``explicit'' construction of symplectic coordinates and thus the use of Fourier integral operators. Yet this simplified method does not rely on the specific form of our operator. It would work as well for more general sub-Riemannian contact Laplacians in dimension $3$ modulo dealing with more cumbersome cohomological equations and modifying conveniently the various functions and vector fields in the subelliptic direction. In particular, if we write $-ihX_2$ and $-ihX_3$ as $\Op_h(H_2)$ and $\Op_h(H_3)$ (modulo terms of order $0$), then we could set $H_1:=\{H_3,H_2\}$ to measure the escape of mass at infinity. When studying the measure $\overline{\nu}_\infty$ (i.e. the regime $1\ll |H_1|\ll h^{-1}$), the geodesic vector field $X$ would be replaced as in~\cite{ColindeVerdiereHillairetTrelat15} by the Reeb vector field $X_1+\alpha X_2+\beta X_3$ with 
$$
X_1:=[X_3,X_2],\quad [X_2,X_1]=\beta X_1\ \text{mod}\ D,\quad \text{and}\quad [X_1,X_3]=\alpha X_1\ \text{mod}\ D.
$$
Except in the case $W=Q\equiv 0$, the invariance properties of the measures $\nu_{k,\infty}^\pm$ would however be more complicated (involving $\alpha$, $\beta$, $Q$, $k$ and the three vector fields $X_j$) and we do not try to compute them explicitely.

\subsection{Organization of the article}   
In Section~\ref{s:expression}, we fix the geometric and semiclassical conventions that are used all along the article. Then, in Section~\ref{s:cutoff}, we microlocalize our eigenfunctions in the region $|H_1|\gg 1$ and we introduce microlocal lifts of our measures that capture the escape of mass at infinity. This is where we introduce the new variable $E=hH_1$. In Section~\ref{s:support}, we show that these microlocal lifts are concentrated on certain quantized layers along the $E$-variable. In Section~\ref{s:normalform}, we introduce a simple normal form procedure that is well adapted to the geometry of our problem and we implement it in Section~\ref{s:invariance} to derive the invariance properties of our lifted measure. Section~\ref{s:main_results_and_proofs} summarizes the main results of the article and show how they can be used to derive Theorem~\ref{t:simplified_theorem} from the introduction. Section~\ref{s:torus} treats the simple example of the flat torus in view of illustrating our analysis in a concrete example. Finally, the article contains two appendices: one devoted to the spectral properties of our hypoelliptic operators (Appendix~\ref{a:spectral}) and another one collecting a few standard facts from semiclassical analysis (Appendix~\ref{a:pdo}). 

\subsection*{Acknowledgements} We thank G.~Carron, B.~Chantraine, C.~Fermanian-Kammerer, B.~Helffer, L.~Hillairet, F.~Maci\`a and C.~Sun for discussions related to various aspects of this work. Both authors are supported by the Agence Nationale de la Recherche through the PRC grant ADYCT (ANR-20-CE40-0017), the first author is partially supported by the projects MTM2017-85934-C3-3-P and PID2021-124195NB-C31 (MINECO,
Spain) and the second author acknowledges the support of the Institut Universitaire de France.

\section{Semiclassical conventions}\label{s:expression}

In this section we introduce the conventions from differential geometry and semiclassical analysis required for our study. We first recall the existence of local isothermal coordinates to write down the differential objects appearing in our framework. This will be particularly useful (although not essential) to describe the normal form procedure of Section \ref{s:normalform}. This local framework also allows us to define the necessary semiclassical pseudodifferential calculus, working from a fixed chart of $M$.

\subsection{Local isothermal coordinates}
\label{s:local_coordinates}

Near any given point $m_0\in M$, one can find a system of local coordinates $(x,y)\in U_0\subset \IR^2$ (with $(0,0)$ being the image of $m_0$) such that the metric $g$ writes down in a conformal way~\cite[Th.~3.4.8]{PaternainSaloUhlmann22}:
$$g(x,y):=e^{2\lambda(x,y)}\left(dx^2+dy^2\right).$$
We denote this neighborhood inside $M$ by $U$ in the sequel.

To write down the geometrical objects involved in the problem in terms of local isothermal coordinates, we follow the presentation of~\cite[\S 3.5]{PaternainSaloUhlmann22} and we refer to it for more details. 
If we denote by $z$ the angle between a unit vector $p\in S_qU_0$ and $\frac{\partial}{\partial x}$, then we have the following expressions for our vector fields~\cite[Lemma~3.5.6]{PaternainSaloUhlmann22}:
\begin{equation}\label{e:geod-coord}
 X:=e^{-\lambda}(\cos z\partial_x+\sin z\partial_y)+e^{-\lambda}\left(-\partial_x\lambda\sin z +\partial_y\lambda \cos z\right)\partial_z,
\end{equation}
\begin{equation}\label{e:hori-coord}
 X_\perp:=e^{-\lambda}(\sin z\partial_x-\cos z\partial_y)+e^{-\lambda}\left(\partial_x\lambda \cos z+\partial_y\lambda \sin z\right) \partial_z,
\end{equation}
and 
\begin{equation}\label{e:vert-coord}
 V:= \partial_z.
\end{equation}
These expressions are obtained by solving the Hamilton-Jacobi equation for the Hamiltonian function $e^{2\lambda(x,y)}(\xi^2+\eta^2)$ on $T^*\IR^2$. The Sasaki metric is not conformal in the system of coordinates $(x,y,z)$. Yet, the volume form has a simple expression
\begin{equation}\label{e:measure-local-coor}d\mu_{L}(x,y,z)=e^{2\lambda(x,y)}dxdydz.
\end{equation}

\begin{rema}
As the results we are aiming at are local, it will be sufficient to work in such a local chart. We use this chart for the sake of concreteness and for simplicity of exposition. Yet, our dynamical and semiclassical arguments would work as well for more general contact flows for which such a nice chart does not exist. 
\end{rema}

\begin{rema} Without loss of generality, we can extend these operators on $\mathcal{U}_0:=SU_0=U_0\times\mathbb{S}^1$ to operators on $\IR^2\times\IS^1$ by extending $\lambda$ into a smooth compactly supported function on $\IR^2$. 
\end{rema}


\subsection{Hamiltonian formulation}
\label{s:hamiltonian_formalism}

In the following, we will make use of different tools of semiclassical pseudodifferential calculus. This leads to define the symbols corresponding to the operators of interest. To this aim, we introduce the Hamiltonian functions associated with the orthonormal frame $(X,X_\perp, V)$. Namely, we define the following symbols on $T^*(\IR^2\times\IS^1)$:
\begin{equation}\label{e:H1}
 H_1(x,y,z,\xi,\eta,\zeta):=e^{-\lambda(x,y)}\left(\xi\cos z+\eta\sin z+\zeta \left(-\partial_x\lambda\sin z +\partial_y\lambda \cos z\right)\right),
\end{equation}
\begin{equation}\label{e:H2}
  H_2(x,y,z,\xi,\eta,\zeta):=e^{-\lambda(x,y)}\left(\xi\sin z-\eta\cos z+\zeta \left(\partial_x\lambda\cos z +\partial_y\lambda \sin z\right)\right),
\end{equation}
and
\begin{equation}\label{e:H3}
 H_3(x,y,z,\xi,\eta,\zeta):=\zeta.
\end{equation}

Notice, in particular, that there exists a positive constant $C_0$ (depending only on our local isothermal coordinates and on our extension of $\lambda$ to $\IR^2$) verifying
\begin{equation}\label{e-norm-comparison}C_0^{-1}(\xi^2+\eta^2+\zeta^2)\leq H_1^2+H_2^2+H_3^2\leq C_0 (\xi^2+\eta^2+\zeta^2).\end{equation}

The commutator relations~\eqref{e:commutators} can then be translated into the following Poisson bracket commutation formulas:
\begin{equation}\label{e:commutators-hamiltonian}
 \{H_1,H_3\}=H_2,\quad \{H_1,H_2\}=-KH_3,\quad\text{and}\quad\{H_2,H_3\}=-H_1,
\end{equation}
where we recall that $K(x,y)$ is the scalar curvature. We also collect a few useful relations involving $H_1$, $H_2$, $H_3$, and $\lambda$ in the next lemma, whose proof is immediate:

\begin{lemm} The following identities hold:
\begin{align}
\label{e:trivial-relation-H3}\partial_xH_3 & =\partial_yH_3=\partial_zH_3=0, \\[0.2cm]
\label{e:intertwine-H1H2}\partial_zH_2 & =H_1,\quad \partial_zH_1=-H_2, \\[0.2cm]
\label{e:derivative-H1}\left(\begin{array}{c} \partial_xH_1\\ \partial_yH_1\end{array}\right) & =-\left(\begin{array}{cc} \partial_x\lambda &e^{-\lambda}\left(\partial_x^2\lambda \cos z+\partial^2_{xy}\lambda\sin z\right)  \\ \partial_y\lambda &e^{-\lambda}\left(\partial_{xy}^2\lambda \cos z+\partial^2_{y}\lambda\sin z\right) \end{array}\right)\left(\begin{array}{c} H_1\\H_3\end{array}\right), \\[0.2cm]
\label{e:derivative-H2}\left(\begin{array}{c} \partial_xH_2\\ \partial_yH_2\end{array}\right) & =-\left(\begin{array}{cc} \partial_x\lambda &e^{-\lambda}\left(\partial_x^2\lambda \sin z-\partial^2_{xy}\lambda\cos z\right)  \\ \partial_y\lambda &e^{ -\lambda}\left(\partial_{xy}^2\lambda \sin z-\partial^2_{y}\lambda\cos z\right)  \end{array}\right)\left(\begin{array}{c} H_2\\H_3\end{array}\right).
\end{align}
\end{lemm}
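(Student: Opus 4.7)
The approach is direct verification using the explicit coordinate expressions \eqref{e:H1}--\eqref{e:H3}. The four identities are of increasing computational complexity, and the plan is to tackle them in that order. No deep idea is needed: everything reduces to the product and chain rules applied to functions of the form $e^{-\lambda(x,y)}\cdot(\text{trigonometric polynomial in }z)\cdot(\text{linear in }(\xi,\eta,\zeta))$.

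First, \eqref{e:trivial-relation-H3} is immediate: since $H_3=\zeta$ depends only on the cotangent variable $\zeta$, its derivatives with respect to the base variables $(x,y,z)$ all vanish. Second, I would verify the intertwining identities \eqref{e:intertwine-H1H2} by differentiating \eqref{e:H1} and \eqref{e:H2} under $\partial_z$. The only $z$-dependent factors are $\cos z$ and $\sin z$, and the rule $\partial_z\cos z=-\sin z$, $\partial_z\sin z=\cos z$ permutes these trigonometric factors precisely as needed to interchange the expressions for $H_1$ and $H_2$, giving $\partial_z H_2=H_1$ and $\partial_z H_1=-H_2$. Conceptually, this simply mirrors at the symbol level the commutation relation $[V,X]=-X_\perp$ (and the analogous one for $X_\perp$).

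Third, for the matrix identities \eqref{e:derivative-H1} and \eqref{e:derivative-H2}, I would apply the product rule to the prefactor $e^{-\lambda(x,y)}$ and to the coefficients $\partial_x\lambda$, $\partial_y\lambda$ that appear inside the definition of $H_1$ and $H_2$. Differentiating the prefactor $e^{-\lambda}$ with respect to $x$ (resp.\ $y$) produces a factor $-\partial_x\lambda$ (resp.\ $-\partial_y\lambda$) multiplying the \emph{entire} original expression; that yields exactly the term $-\partial_x\lambda\cdot H_1$ (resp.\ $-\partial_y\lambda\cdot H_1$), which accounts for the first column of the matrix. Differentiating the inner coefficients $-\partial_x\lambda\sin z+\partial_y\lambda\cos z$ (and the analogous ones for $H_2$) brings down second derivatives of $\lambda$, and the resulting term factors as $e^{-\lambda}\zeta=e^{-\lambda}H_3$ times a trigonometric combination of these second derivatives, which produces the second column of the matrix.

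The calculation is purely algebraic and presents no conceptual obstacle. The only point requiring some care is the sign bookkeeping when differentiating $\sin z$ and $\cos z$ inside the various expressions and recognizing the resulting linear combinations as multiples of $H_1$, $H_2$ or $H_3$; I would organize the output column by column against the matrix form to avoid confusion.
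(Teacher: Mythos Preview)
Your approach is correct and is exactly what the paper intends: the paper states only that the proof ``is immediate'', meaning direct verification from the explicit coordinate expressions \eqref{e:H1}--\eqref{e:H3}, which is precisely the computation you outline. There is nothing to add.
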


\subsection{Semiclassical Weyl quantization}

With the above conventions, we can next rewrite the geometrical objects introduced in \S\ref{s:local_coordinates} in terms of pseudodifferential operators by making use of the Hamiltonian formulation of \S\ref{s:hamiltonian_formalism}. Precisely, we have:
$$\frac{h}{i}X=\Op_h^w(H_1-ih X(\lambda)),\ \frac{h}{i}X_\perp=\Op_h^w(H_2-ihX_\perp(\lambda)),\ \text{and}\ \frac{h}{i}V=\Op_h^w(H_3),$$
where $\Op_h^w$ stands for the semiclassical Weyl quantization on $\IR^2\times\IS^1$ (see Appendix~\ref{a:pdo}). In particular,
\begin{equation}\label{e:quantization-Delta-sR}
 -h^2\Delta_{\text{sR}}=\Op^w_h\left(H_2^2+H_3^2-2ih X_\perp(\lambda)H_2-h^2X_\perp(\lambda)^2\right).
\end{equation}
It will be slightly more convenient to work in the local chart with the operator $-h^2e^{\lambda}\Delta_{\text{sR}}e^{-\lambda}$ due to the following conjugation formula:
\begin{lemm}
The following holds on $\mathcal{U}_0$:
\begin{equation}\label{e:conjugation-sublaplacian}
 -h^2e^{\lambda}\Delta_{\operatorname{sR}}e^{-\lambda}=\Op_h^w(H_2^2+H_3^2).
\end{equation}
\end{lemm}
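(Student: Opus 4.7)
The plan is to verify the identity by a direct operator computation: first conjugate $\Delta_{\operatorname{sR}}$ by $e^\lambda$ at the level of differential operators using the structure of the vertical and horizontal fields, and then match the result against the Weyl quantization of $H_2^2 + H_3^2$ via the first-order formulas already established in this subsection.

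For the conjugation step, I would exploit that $\lambda = \lambda(x,y)$ depends only on the base variables, so that $V(\lambda) = \partial_z\lambda = 0$ and consequently $V$ commutes with multiplication by $e^{\pm\lambda}$; this immediately yields $e^\lambda V^2 e^{-\lambda} = V^2$. For the horizontal field, the Leibniz rule gives $X_\perp(e^{-\lambda}u) = e^{-\lambda}(X_\perp u - X_\perp(\lambda)u)$, i.e.\ $e^\lambda X_\perp e^{-\lambda} = X_\perp - X_\perp(\lambda)$, and squaring this conjugation produces $e^\lambda X_\perp^2 e^{-\lambda} = (X_\perp - X_\perp(\lambda))^2$. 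Adding the two contributions yields
\[
-h^2 e^\lambda \Delta_{\operatorname{sR}} e^{-\lambda} = -h^2\bigl(X_\perp - X_\perp(\lambda)\bigr)^2 - h^2 V^2.
\]

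To close the proof I would recognize each piece on the right as a Weyl-quantized operator. Since the Weyl quantization of a function on the base is pointwise multiplication, linearity of $\Op_h^w$ combined with the already displayed identity $\frac{h}{i}X_\perp = \Op_h^w(H_2 - ihX_\perp(\lambda))$ gives
\[
\frac{h}{i}\bigl(X_\perp - X_\perp(\lambda)\bigr) = \Op_h^w(H_2),
\]
whence $-h^2(X_\perp - X_\perp(\lambda))^2 = \Op_h^w(H_2)^2$. The analogue $\frac{h}{i}V = \Op_h^w(H_3)$ is immediate from $H_3 = \zeta$, so $-h^2 V^2 = \Op_h^w(H_3)^2$. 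Summing and then passing from $\Op_h^w(H_j)^2$ to $\Op_h^w(H_j^2)$ via the Moyal composition formula yields the claimed identity. The main point requiring care is this last step, the identification $\Op_h^w(H_2)^2 = \Op_h^w(H_2^2)$: because $H_2$ is linear in the cotangent variables the Moyal expansion of $H_2 \# H_2$ collapses to its first two nonvanishing terms, and one has to check that the remaining symmetric correction is compatible with the conjugated operator on the left. This is the only nontrivial bookkeeping in the argument; once this matching is secured, all other steps are immediate from the first-order quantization identities.
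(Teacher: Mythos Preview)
Your approach is essentially the paper's, just organized more directly: the paper introduces the auxiliary operator $\tilde\Delta_{\mathrm{sR}}=X_\perp^2+V^2-2X_\perp(\lambda)X_\perp$ and records two identities for it, but unwinding them gives exactly your computation $e^{\lambda}\Delta_{\mathrm{sR}}e^{-\lambda}=(X_\perp-X_\perp(\lambda))^2+V^2$ together with the identification of $-h^2$ times the right-hand side with $\Op_h^w(H_2^2+H_3^2)$.

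The point you flag at the end---whether $\Op_h^w(H_2)^2=\Op_h^w(H_2^2)$---is precisely the step the paper also leaves implicit, and your instinct that it requires care is correct. Writing $H_2=\sum_j f_j(q)p_j$, the Moyal expansion of $H_2\# H_2$ terminates at order two and gives $H_2^2+\tfrac{h^2}{4}\sum_{j,k}(\partial_{q_j}f_k)(\partial_{q_k}f_j)$; this zeroth-order correction does not vanish for general $\lambda$, and there is nothing ``on the left'' to absorb it, since by your own computation the left side is exactly $\Op_h^w(H_2)^2+\Op_h^w(H_3)^2$. So the identity as an \emph{exact} equality is delicate for both arguments. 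In practice this is harmless: any such $h^2\times(\text{smooth function})$ discrepancy is swallowed by the $h^2W_{1,\lambda}$ term in the next displayed formula~\eqref{e:conjugation-Ph}, which is the only way the lemma is used downstream.
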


\begin{proof}
Let us define the modified sub-Riemannian Laplacian given by:
$$\tilde{\Delta}_{\text{sR}}:=X_\perp^2+V^2-2X_\perp(\lambda)X_\perp.$$ 
One can then verify that $e^{-\lambda} \tilde{\Delta}_{\text{sR}}e^{\lambda}=\Delta_{\text{sR}}+(X_\perp^2(\lambda)-X_\perp(\lambda)^2),$
and moreover that
$$-h^2\tilde{\Delta}_{\text{sR}}=\Op_h^w(H_2^2+H_3^2)-h^2\left(X_\perp^2(\lambda)-X_\perp(\lambda)^2\right).$$
In particular, one finds \eqref{e:conjugation-sublaplacian}.

\end{proof}
We are now in position to obtain the expression of the full operator $\widehat{P}_h$. To do that, we first use the Weyl pseudodifferential calculus to write
$$-ih^2QX -\frac{ih^2X(Q)}{2}=\Op_h^w(hQH_1+h^2W_1),$$
where $W_1\in\ml{C}^{\infty}_c(\IR^2\times\IS^1,\IC)$ is independent of $h$. Using the composition rules for the Weyl quantization, we obtain
\begin{equation}\label{e:conjugation-Ph}
\widehat{P}_h= e^{-\lambda}\Op_h^w(H_2^2+H_3^2+hQH_1+W+h^2W_{1,\lambda}) e^{\lambda},
\end{equation}
where $W_{1,\lambda}\in\ml{C}^{\infty}_c(\ml{U}_0,\IC)$ is independent of $h$.  Regarding this expression, it is natural to set
\begin{equation}\label{e:conjugated-eigenfunction}
u_h=e^{\lambda}\psi_h,
\end{equation}
and thanks to~\eqref{e:conjugation-Ph}, $u_h$ solves locally in $\ml{U}_0$ the eigenvalue equation
\begin{equation}\label{e:eigenvalue-conjugation}
 \widehat{P}_{h,\lambda} \, u_h=\lambda_h \, u_h,
\end{equation}
where
$$\widehat{P}_{h,\lambda}:=\Op_h^w(H_2^2+H_3^2+hQH_1+W+h^2W_{1,\lambda}).$$
The a priori estimate~\eqref{e:apriori-semiclassical} from the introduction then reads
\begin{equation}\label{e:apriori-semiclassical1}
\begin{array}{rcl} \displaystyle \|\Op_h^w(H_2)u_h\|_{L^2(\ml{K})}+\|\Op_h^w(H_3)u_h\|_{L^2(\ml{K})}+\|\Op_h^w(hH_1)u_h\|_{L^2(\ml{K})} & & \\[0.2cm]
 &   \hspace*{-6.5cm}  \leq &  \hspace*{-3.3cm}  \displaystyle 2C_{Q,W,\mathcal{K}}(1+|\lambda_0|),
  \end{array}
\end{equation}
where $\mathcal{K}$ is any compact subset of $\ml{U}_0$ and the $L^2$ norm is now taken with respect to the standard Lebesgue measure $dxdydz$ on $\mathcal{K}\subset \IR^2\times\IS^1$.

\subsection{Class of symbols in the region $|H_1|\gg\sqrt{H_2^2+H_3^2}$} In Section \ref{s:normalform} below, we will describe a normal form procedure that will naturally involve functions in the spaces:
$$\mathcal{P}_N(\IR^2\times\IS^1):=\Bigg \{\sum_{\alpha=(\alpha_2,\alpha_3)\in\IZ_+^2 \, : \, |\alpha|\leq N } a_{\alpha}\left(\frac{H_2}{H_1}\right)^{\alpha_2}\left(\frac{H_3}{H_1}\right)^{\alpha_3} \, : \,\forall\alpha,\ a_\alpha\in\ml{C}^{\infty}(\IR^2\times\IS^1)\Bigg \}.$$
Notice that as a consequence of \eqref{e:trivial-relation-H3}, \eqref{e:intertwine-H1H2}, \eqref{e:derivative-H1} and \eqref{e:derivative-H2}, one can verify the following:
\begin{lemm}\label{l:polynomial} Let $P$ be an element in $\ml{P}(\IR^2\times\IS^1) := \bigcup_N \mathcal{P}_N(\IR^2\times\IS^1)$. Then, $\partial_xP,$ $ \partial_yP$ and $\partial_z P$
 belong to $\ml{P}(\IR^2\times\IS^1)$. Similarly, letting $p=(\xi,\eta,\zeta)$, one has that, for every $\gamma\in\IZ_+^3$,
 $H_1^{|\gamma|}\partial_{p}^{\gamma}P$
 belongs to $\ml{P}(\IR^2\times\IS^1).$
\end{lemm}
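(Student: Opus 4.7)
The plan is to argue by linearity and then check the two assertions on a single monomial
$$P_\alpha:=a_\alpha\left(\frac{H_2}{H_1}\right)^{\alpha_2}\left(\frac{H_3}{H_1}\right)^{\alpha_3},\qquad a_\alpha\in\ml{C}^\infty(\IR^2\times\IS^1).$$
Since $\ml{P}(\IR^2\times\IS^1)$ is a vector space, stable under multiplication by smooth functions of $(x,y,z)$ (the coefficients $a_\alpha$), and since $\partial_x,\partial_y,\partial_z,\partial_p^\gamma$ are all derivations, everything reduces to controlling how the generators $H_2/H_1$ and $H_3/H_1$ transform under these derivatives.

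For the first assertion, I would simply compute each derivative with the quotient rule. For $\partial_z$, identity~\eqref{e:intertwine-H1H2} together with $\partial_z H_3=0$ from~\eqref{e:trivial-relation-H3} gives
$$\partial_z\!\left(\frac{H_2}{H_1}\right)=1+\left(\frac{H_2}{H_1}\right)^2,\qquad \partial_z\!\left(\frac{H_3}{H_1}\right)=\frac{H_2}{H_1}\cdot\frac{H_3}{H_1},$$
both of which lie in $\ml{P}$. For $\partial_x$ and $\partial_y$, I plug in~\eqref{e:derivative-H1} and~\eqref{e:derivative-H2}: each occurrence of $\partial_x H_i$ or $\partial_y H_i$ is a linear combination of $H_1,H_2,H_3$ with smooth coefficients. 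After dividing by the appropriate power of $H_1$, the result is again a polynomial in $H_2/H_1$ and $H_3/H_1$ with smooth coefficients, hence in $\ml{P}$. Leibniz then finishes $\partial_x P_\alpha,\partial_y P_\alpha, \partial_z P_\alpha\in\ml{P}$.

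For the second assertion, the key observation is that $H_1,H_2,H_3$ are \emph{linear} in $p=(\xi,\eta,\zeta)$, so each $\partial_{p_j}H_i$ is a smooth function of $(x,y,z)$ only. In particular,
$$H_1\,\partial_{p_j}\!\left(\frac{H_i}{H_1}\right)=\partial_{p_j}H_i-\frac{H_i}{H_1}\,\partial_{p_j}H_1\in\ml{P},\qquad i\in\{2,3\},$$
and multiplying by monomials in $H_2/H_1,H_3/H_1$ together with Leibniz gives $H_1\,\partial_{p_j}P_\alpha\in\ml{P}$. I then proceed by induction on $|\gamma|$: assuming $\partial_p^{\gamma'}P=H_1^{-(|\gamma|-1)}R$ with $R\in\ml{P}$, differentiating yields
$$\partial_p^{\gamma'+e_j}P=-(|\gamma|-1)H_1^{-|\gamma|}(\partial_{p_j}H_1)R+H_1^{-(|\gamma|-1)}\partial_{p_j}R,$$
so
$$H_1^{|\gamma|}\partial_p^{\gamma'+e_j}P=-(|\gamma|-1)(\partial_{p_j}H_1)R+H_1\,\partial_{p_j}R,$$
and both summands belong to $\ml{P}$ by the base case and the stability of $\ml{P}$ under multiplication by smooth functions of $(x,y,z)$.

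I do not expect any genuine obstacle here: the statement is essentially a closure property, and the identities~\eqref{e:trivial-relation-H3}--\eqref{e:derivative-H2} have been tailored precisely so that the elementary derivatives of $H_2/H_1$ and $H_3/H_1$ remain polynomial expressions in these ratios. The only thing one has to be mildly careful about is bookkeeping the power of $H_1$ in the $p$-derivative case, which is handled cleanly by the induction above.
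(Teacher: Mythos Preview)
Your argument is correct and follows essentially the same approach as the paper's proof: the first assertion is obtained from the identities~\eqref{e:trivial-relation-H3}--\eqref{e:derivative-H2} via the quotient rule, and the second is handled by induction on $|\gamma|$ using that $H_1,H_2,H_3$ are linear in $(\xi,\eta,\zeta)$. You have simply made explicit the computations that the paper leaves to the reader.
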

\begin{proof}
 The first part of the Lemma is direct consequence of~\eqref{e:trivial-relation-H3}, \eqref{e:intertwine-H1H2}, \eqref{e:derivative-H1} and \eqref{e:derivative-H2}. For the second part, we proceed by induction on $|\gamma|$ and use the fact that $H_1$, $H_2$ and $H_3$ are linear functions in $(\xi,\eta,\zeta)$.
\end{proof}

To make the necessary estimates in the sub-elliptic regime arising from our problem, we will be naturally led to work in the ``conic'' region
\begin{equation}\label{e:cone-eps}
C_\varepsilon(\mathcal{K}):=\left\{(q,p)\in T^*(\mathcal{K}):\ \varepsilon |H_1(q,p)|\geq \sqrt{1+H_2^2(q,p)+H_3^2(q,p)}\right\},
\end{equation}
where $\mathcal{K}$ is a compact subset of $\IR^2\times\IS^1$ and where $\varepsilon\in(0,1]$ is some small parameter that is intended to tend to $0$ in the end. We record the following corollary of Lemma~\ref{l:polynomial}: 
\begin{coro}\label{c:derivative-cone} Let $P$ in $\ml{P}(\IR^2\times\IS^1)$ and let $\ml{K}$ be a compact subset of $\IR^2\times\IS^1$. For every $0<\varepsilon\leq 1$ and for every $(\alpha,\beta)\in\IZ_+^6$, one can find a constant $C_{\varepsilon, P, \ml{K}, \alpha,\beta}$ such that, for every $(q,p)=(x,y,z,\xi,\eta,\zeta)$ in $C_\varepsilon(\ml{K})$, one has
$$\left|\partial^\alpha_{q}\partial_{p}^\beta P\right|\leq C_{\varepsilon,P, \ml{K},\alpha,\beta}\langle p\rangle^{-|\beta|},$$ 
where $\langle p\rangle:=(1+\xi^2+\eta^2+\zeta^2)^{\frac{1}{2}}.$ 
\end{coro}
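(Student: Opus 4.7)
The plan is to reduce the estimate to two elementary facts about the cone $C_\varepsilon(\mathcal{K})$: first, that any element of $\mathcal{P}(\IR^2\times\IS^1)$ is uniformly bounded on $C_\varepsilon(\mathcal{K})$; second, that $|H_1|$ is comparable to $\langle p\rangle$ throughout that cone. Everything else is bookkeeping via Lemma~\ref{l:polynomial}.

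Since partial derivatives commute, write $\partial_q^\alpha\partial_p^\beta P=\partial_p^\beta(\partial_q^\alpha P)$. The first part of Lemma~\ref{l:polynomial} yields $\partial_q^\alpha P\in\mathcal{P}(\IR^2\times\IS^1)$, and its second part then gives $H_1^{|\beta|}\partial_p^\beta(\partial_q^\alpha P)\in\mathcal{P}(\IR^2\times\IS^1)$. Hence
\[
\partial_q^\alpha\partial_p^\beta P \;=\; H_1^{-|\beta|}\cdot\bigl(H_1^{|\beta|}\partial_q^\alpha\partial_p^\beta P\bigr),
\]
and it is enough to establish: (i) any $\widetilde{P}\in\mathcal{P}(\IR^2\times\IS^1)$ is bounded on $C_\varepsilon(\mathcal{K})$ by a constant depending only on $\widetilde{P},\varepsilon,\mathcal{K}$; (ii) $|H_1|^{-|\beta|}\lesssim_\varepsilon\langle p\rangle^{-|\beta|}$ on $C_\varepsilon(\mathcal{K})$.

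Point (i) is immediate from the definition of $\mathcal{P}_N$: a generic summand reads $a_\alpha(x,y,z)(H_2/H_1)^{\alpha_2}(H_3/H_1)^{\alpha_3}$, with $a_\alpha$ smooth, hence bounded on the compact set $\mathcal{K}$; the defining inequality of $C_\varepsilon(\mathcal{K})$ forces $|H_2/H_1|\leq\varepsilon\leq 1$ and $|H_3/H_1|\leq\varepsilon\leq 1$. Point (ii) uses the norm equivalence~\eqref{e-norm-comparison}: on $C_\varepsilon(\mathcal{K})$ one has $H_2^2+H_3^2\leq\varepsilon^2 H_1^2$ together with $1\leq\varepsilon^2 H_1^2$, so
\[
\langle p\rangle^2 \;\leq\; 1+C_0(H_1^2+H_2^2+H_3^2) \;\leq\; (1+2C_0)\,H_1^2,
\]
which yields $|H_1|\geq c_\varepsilon\langle p\rangle$ for some $c_\varepsilon>0$. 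Raising to the power $|\beta|$ gives (ii), and combining (i) with (ii) concludes.

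There is no genuine obstacle: the corollary is essentially a corollary of Lemma~\ref{l:polynomial} together with the definition of $C_\varepsilon(\mathcal{K})$. The only point requiring some care is making sure that the cone condition produces the correct weight $\langle p\rangle^{-|\beta|}$ rather than merely boundedness, which is handled by the explicit equivalence~\eqref{e-norm-comparison} between $H_1^2+H_2^2+H_3^2$ and $\xi^2+\eta^2+\zeta^2$.
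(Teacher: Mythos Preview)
Your proof is correct and is exactly the argument the paper has in mind: the corollary is stated without proof in the paper, simply as a consequence of Lemma~\ref{l:polynomial}, and you have supplied the natural details by combining that lemma with the cone condition and the norm equivalence~\eqref{e-norm-comparison}. One minor remark: your constant $c_\varepsilon$ in step~(ii) actually does not depend on $\varepsilon$ (since $\varepsilon\leq 1$ already suffices to bound $1+H_2^2+H_3^2\leq 2H_1^2$), but this is harmless.
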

In particular, elements in $\ml{P}(\IR^2\times\IS^2)$ satisfy the properties of the class of (Kohn-Nirenberg) symbols $S^0_{\text{cl}}(T^*(\IR^2\times\IS^1))$ defined in Appendix~\ref{a:pdo} inside $C_\varepsilon(\mathcal{K})$ for any compact subset $\mathcal{K}$ of $\IR^2\times\IS^1$.

\section{Reduction to the region $1\ll |H_1|\lesssim h^{-1}$}\label{s:cutoff}

Since our results on quantum limits and semiclassical measures in the subelliptic regime are essentially local, we will restrict ourselves to study the following measures on $\mathcal{U}_0$:
$$\nu_h:a\in\ml{C}^\infty_c(\mathcal{U}_0)\mapsto \int_{\mathcal{U}_0} a(x,y,z)|\psi_h(x,y,z)|^2e^{2\lambda(x,y)}dxdydz,$$
where $\mathcal{U}_0$ is a bounded open subset of $\IR^2\times\IS^1$ given by local isothermal coordinates and where $(\psi_h,\lambda_h)$ is a sequence satisfying~\eqref{e:semiclassical-eigenvalue}. As the sequence $(\psi_h)$ is normalized, this defines a sequence of measures on $\mathcal{U}_0$ that are of finite mass $\leq 1$. In fact, up to an extraction, one can suppose that $\nu_h\rightharpoonup\nu$ as $h\rightarrow 0^+$ and the limit measure is supported in $\ml{U}_0$ with total mass $\leq 1$. We fix this converging subsequence for the rest of the article.

Using the convention from~\eqref{e:eigenvalue-conjugation}, this can be rewritten as
$$\nu_h:a\in\ml{C}^\infty_c(\mathcal{U}_0)\mapsto \int_{\mathcal{U}_0} a(x,y,z)|u_h(x,y,z)|^2dxdydz,$$
which allows us to work with the standard Lebesgue measure. Before trying to prove our main Theorem, we will first show in this Section how to reduce these integrals to the region of the phase space where $1\ll |H_1|\lesssim h^{-1}$ and thus how to define the mesure $\mu_\infty$ from the introduction. We remark that the measures $\nu_h$ can be rewritten as
\begin{equation}\label{e:measure-pseudo}
\langle\nu_h,a\rangle=\left\langle \Op_h^w(a)u_h,u_h\right\rangle_{L^2}.
\end{equation}
More generally, as anticipated in the introduction, we consider the associated Wigner distribution 
\begin{equation}\label{e:measure-pseudo_lift}
\forall a\in\ml{C}^{\infty}_c(T^*\ml{U}_0),\quad\langle w_h,a\rangle: =\left\langle \Op_h^w(a)u_h,u_h\right\rangle_{L^2}, 
\end{equation}
and, up to another extraction, we can suppose that it converges to some (finite) limit measure $w$ on $T^*\ml{U}_0$.

\begin{rema} As usual when working with coordinate charts, we make a small abuse of notations and write $\psi_h$ for the image of $\psi_h$ in the coordinate system. As we always suppose $a$ to be compactly supported in the chart, this causes no difficulties (up to $\ml{O}(h^\infty)$ remainders) and we may view $\psi_h$ as a smooth compactly supported function on $\IR^2\times\IS^1$.
\end{rema}

In the rest of the article, we fix a smooth function $\chi:\IR\rightarrow [0,1]$ which is equal to $1$ on $[-1,1]$ and to $0$ outside $[-2,2]$. Moreover, we make the assumption that $\chi'\geq 0$ on $\IR_-$ and $\chi'\leq 0$ on $\IR_+$. For such a function, we also set
\begin{equation}\label{e:partition-unity}
 \tilde{\chi}=1-\chi.
\end{equation}

For the description of the limit measure $\nu_\infty = \nu - \pi_*w$ and for the definition of $\mu_\infty$, we reduce the analysis of the sequence $\nu_h$ to the subelliptic regime $1 \ll \vert H_1 \vert \lesssim h^{-1}$. To do so, we proceed in three steps.

\subsection{Reduction to the region at infinity}

First, we split the measure $\nu_h$ into two parts corresponding to the compact and non-compact distribution of the sequence $(u_h)_{h\rightarrow 0^+}$ in phase space. It leads respectively to the definition of the weak limits $\pi_* w$ and $\nu_\infty$. Let $R>1$, we introduce the  cut-off functions
\begin{equation}\label{e:cutoff-cone}
\chi_R^B:=\chi\left(\frac{H_1^2+H_2^2+H_3^2}{R}\right),\ \quad\tilde{\chi}_R^B=1-\chi_R^B.
\end{equation}
These cut-offs allow us to split $\nu_h=\nu_{h,R}+\nu_h^R$ where
$$\forall a\in\ml{C}^\infty_c(\ml{U}_0),\quad\langle \nu_{h,R},a\rangle=\langle w_h, a\chi_R^B\rangle,\quad\text{and}\quad\langle \nu_{h}^R,a\rangle=\langle w_h, a\tilde{\chi}_R^B\rangle.$$
Notice moreover that the cut-offs $\chi_R^B$ and $\tilde{\chi}_R^B$ belong to the admissible class of symbols $S^0_{\operatorname{cl}}(T^*(\IR^2\times\IS^1)$ defined in Appendix~\ref{a:pdo}. Letting $h\rightarrow 0^+$ and $R\rightarrow +\infty$ (in this order), one finds
$$\lim_{R\rightarrow +\infty}\lim_{h\rightarrow 0^+}\langle \nu_{h,R},a\rangle= \langle\pi_* w,a\rangle,$$
where $\pi: T^*\ml{U}_0 \ni (q,p) \mapsto q\in\ml{U}_0$, and
\begin{equation}\label{e:QL-infinity}
 \langle \nu_\infty,a\rangle = \lim_{R\rightarrow +\infty}\lim_{h\rightarrow 0^+}\langle \nu_{h}^R,a\rangle.
\end{equation}

\begin{rema}
 Again we implicitely consider sequences $R_n\rightarrow +\infty$ (say $2^n$) but we just write $R\rightarrow +\infty$ for simplicity.
\end{rema}

\subsection{Reduction to the cones $C_\varepsilon(\overline{\ml{U}}_0)$} We next introduce a further cut-off restricting the measure $\nu_h$ to a conic region containing the semiclassical wave-front set of the sequence $(u_h)_{h\rightarrow 0^+}$ in the subelliptic regime $1 \ll \vert H_1 \vert\lesssim h^{-1}$. We set, for $0<\varepsilon< 1$,
\begin{equation}\label{e:cutoff-cone_2}
\chi_\varepsilon^C:=\chi\left(\frac{\varepsilon H_1}{\sqrt{H_2^2+H_3^2+1}}\right),\ \quad\tilde{\chi}_\varepsilon^C=1-\chi_\varepsilon^C.
\end{equation}
Before including these cut-offs in our analysis of the sequence $\nu_h^R$, we show the following result:
\begin{lemm}\label{l:symbol} For every $0<\varepsilon<1$, the symbols $\chi_\varepsilon^C$ and $\tilde{\chi}_\varepsilon^C$ belong to the admissible class of symbols $S^0_{\operatorname{cl}}(T^*(\IR^2\times\IS^1)$ defined in Appendix~\ref{a:pdo}. 
\end{lemm}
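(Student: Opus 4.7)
The plan is to write $\chi_\varepsilon^C = \chi(F_\varepsilon)$ and $\tilde{\chi}_\varepsilon^C = (1-\chi)(F_\varepsilon)$, where
\[
F_\varepsilon := \frac{\varepsilon H_1}{g}, \qquad g := \sqrt{1 + H_2^2 + H_3^2},
\]
and then to apply the Fa\`a di Bruno formula after establishing suitable derivative estimates for $F_\varepsilon$ on $\operatorname{supp}\chi(F_\varepsilon)$. Since $g\geq 1$ everywhere, $F_\varepsilon$ is smooth on $T^*(\IR^2\times\IS^1)$. The key geometric observation is that $|F_\varepsilon|\leq 2$ forces $\varepsilon|H_1|\leq 2g$; combined with \eqref{e-norm-comparison}, this produces a constant $C_\varepsilon>0$ such that $\langle p\rangle\leq C_\varepsilon\, g$ on $\{|F_\varepsilon|\leq 2\}$.

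Next, I would establish the building-block estimates. Since $H_1,H_2,H_3$ are linear in $p$ with smooth coefficients bounded together with all their derivatives (thanks to the compactly supported extension of $\lambda$), one has $|\partial_q^\alpha \partial_p^\beta H_j|\leq C_{\alpha,\beta}\langle p\rangle^{\max(0,1-|\beta|)}$ for $j\in\{1,2,3\}$. A Leibniz induction starting from $g^2=1+H_2^2+H_3^2$ and using $|H_j|\leq g$ yields the global bounds
\[
|\partial_q^\alpha \partial_p^\beta g|\leq C_{\alpha,\beta}\langle p\rangle^{1-|\beta|}\ (|\beta|\geq 1),\qquad |\partial_q^\alpha g|\leq C_\alpha\langle p\rangle.
\]
A second induction, now exploiting the support condition $g\gtrsim_\varepsilon\langle p\rangle$, gives $|\partial_q^\alpha \partial_p^\beta g^{-1}|\leq C_{\varepsilon,\alpha,\beta}\langle p\rangle^{-1-|\beta|}$ on $\{|F_\varepsilon|\leq 2\}$. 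One more application of Leibniz to $F_\varepsilon=\varepsilon H_1\cdot g^{-1}$ then yields
\[
|\partial_q^\alpha \partial_p^\beta F_\varepsilon|\leq C_{\varepsilon,\alpha,\beta}\langle p\rangle^{-|\beta|}\qquad\text{on}\ \{|F_\varepsilon|\leq 2\}.
\]

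Finally, I would invoke the multi-variable Fa\`a di Bruno formula
\[
\partial_q^\alpha \partial_p^\beta \chi(F_\varepsilon)=\sum_\pi \chi^{(|\pi|)}(F_\varepsilon)\prod_{B\in\pi}\partial^B F_\varepsilon,
\]
where $\pi$ runs over set partitions of the index multiset associated with $(\alpha,\beta)$ and $\partial^B$ denotes the partial derivative indexed by the block $B$. For $(\alpha,\beta)\neq(0,0)$, every term has $|\pi|\geq 1$, so $\chi^{(|\pi|)}(F_\varepsilon)$ is bounded and supported in $\{1\leq|F_\varepsilon|\leq 2\}$, where the estimate on $F_\varepsilon$ just obtained applies. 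Since the blocks of $\pi$ collectively carry exactly $|\beta|$ many $p$-derivatives, each term is dominated by $C_{\varepsilon,\alpha,\beta}\langle p\rangle^{-|\beta|}$, yielding the desired global Kohn--Nirenberg estimate $|\partial_q^\alpha \partial_p^\beta \chi_\varepsilon^C|\leq C_{\varepsilon,\alpha,\beta}\langle p\rangle^{-|\beta|}$. The case $(\alpha,\beta)=(0,0)$ is immediate from $0\leq\chi_\varepsilon^C\leq 1$, and the estimates for $\tilde{\chi}_\varepsilon^C=1-\chi_\varepsilon^C$ follow at once. The main obstacle is the second step: the growth $|\partial_q g|=O(\langle p\rangle)$ is compensated only on $\{|F_\varepsilon|\leq 2\}$ via $g\gtrsim_\varepsilon\langle p\rangle$, and this localization (automatic for derivatives of $\chi(F_\varepsilon)$) is precisely what allows uniform symbol bounds even though $F_\varepsilon$ itself does not lie in $S^0_{\operatorname{cl}}$ globally.
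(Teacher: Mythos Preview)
Your argument is correct in substance and follows essentially the same route as the paper: both localize to the region where $\chi'(F_\varepsilon)\neq 0$, establish Kohn--Nirenberg bounds on the derivatives of the inner function $F_\varepsilon$ there, and conclude via the chain rule. The paper obtains these bounds using the structural identities \eqref{e:trivial-relation-H3}--\eqref{e:derivative-H2} to write $\partial_q^\alpha(H_1/g)$ explicitly as a sum of polynomials in $(H_1,H_2,H_3)$ over powers of $1+H_2^2+H_3^2$, whereas you proceed by a generic Leibniz/Fa\`a di Bruno induction; both work.

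One correction: your claimed \emph{global} bound $|\partial_q^\alpha g|\leq C_\alpha\langle p\rangle$ fails for $|\alpha|\geq 2$. By \eqref{e:intertwine-H1H2} one has $\partial_z H_2=H_1$, so for instance $\partial_z^2 g$ contains a term $H_1^2/g$, which is of size $\langle p\rangle^2$ along $\{H_2=H_3=0,\ |H_1|\to\infty\}$ where $g\asymp 1$. This does not affect the validity of your proof, since you only ever invoke the bound on $\{|F_\varepsilon|\leq 2\}$, where $g\gtrsim_\varepsilon\langle p\rangle$ restores the estimate; simply run the induction for $g$ on that set from the start, exactly as you already do for $g^{-1}$ and as you yourself anticipate in your closing remark.
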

\begin{rema}
The corresponding seminorms of $\chi_\varepsilon^C$ and $\tilde{\chi}_\varepsilon^C$ in $S^0_{\operatorname{cl}}(T^*(\IR^2\times\IS^1)$ depend on $\varepsilon$.
\end{rema}
\begin{proof}
 From the definition of $\chi_\varepsilon^C$ and $\tilde{\chi}_\varepsilon^C$, it is sufficient to verify that all the derivatives of
$$g:=\frac{H_1}{\sqrt{H_2^2+H_3^2+1}}$$
are bounded (with some further decay for the derivatives with respect to $(\xi,\eta,\zeta)$) in the region where $\varepsilon g\in\text{supp}(\chi')$, i.e.
\begin{equation}\label{e:support-derivatives}
 \frac{1}{\varepsilon}\sqrt{1+H_2^2+H_3^2}\leq |H_1|\leq \frac{2}{\varepsilon}\sqrt{1+H_2^2+H_3^2}.
\end{equation}
Thanks to~\eqref{e:trivial-relation-H3}, \eqref{e:intertwine-H1H2}, \eqref{e:derivative-H1} and \eqref{e:derivative-H2}, we can verify by induction that, for every $\alpha\in\IZ_+^3$,
$$\partial_{xyz}^{\alpha}g=\sum_{j=0}^{|\alpha|}\frac{P_{j,\alpha}(H_1,H_2, H_3)}{(1+H_2^2+H_3^2)^{j+\frac{1}{2}}},$$
where, for every $0\leq j\leq |\alpha|$, $(u,v)\mapsto P_{j,\alpha}(u,v)$ is a polynomial of degree $\leq 2j+1$. In particular, using~\eqref{e:support-derivatives}, all these derivatives are bounded (with a constant depending on $\varepsilon$). It now remains to deal with the derivatives with respect to $(\xi,\eta,\zeta)$. To this aim, we recall that $H_1$, $H_2$ and $H_3$ are linear functions in these variables. Arguing by induction as for the derivatives with respect to $(x,y,z)$, we can then conclude that, for every $(\alpha,\beta)\in \IZ_+^6$, $\partial_{xyz}^\alpha\partial_{\xi\eta\zeta}^\beta g$ is uniformly bounded by $C_\varepsilon (1+H_2^2+H_3^2)^{-\frac{|\beta|}{2}}$ under the assumption~\eqref{e:support-derivatives}. Using the upper bound in~\eqref{e:support-derivatives}, we deduce that this is bounded by $C_\varepsilon(1+H_1^2+H_2^2+H_3^2)^{-\frac{|\alpha|}{2}}$, for some slighlty larger constant $C_\varepsilon>0$. This concludes the proof of the Lemma thanks to~\eqref{e-norm-comparison}.
\end{proof}

The next step of our analysis consists of inserting these two cutoffs in the construction of $\nu_h^R$. This produces the splitting:
$$\langle\nu_h^R,a\rangle=\left\langle \Op_h^w(a\tilde{\chi}_R^B\chi_\varepsilon^C)u_h,u_h\right\rangle_{L^2}
+\left\langle \Op_h^w(a\tilde{\chi}_R^B\tilde{\chi}_\varepsilon^C)u_h,u_h\right\rangle_{L^2}.
$$
By the same arguments as the ones used in the proof of Lemma~\ref{l:symbol}, we observe that the function 
$$
\frac{a\chi_\varepsilon^C\tilde{\chi}_R^B}{1+H_2^2+H_3^2}
$$ 
belongs to the class of symbols $S^{-2}_{\text{cl}}(T^*(\IR^2\times\IS^1))$. In particular, from the composition rules for pseudodifferential operators and the Calder\'on-Vaillancourt Theorem, one has
$$\Op_h^w(a\chi_\varepsilon^C\tilde{\chi}_R^B)=\Op_h^w\left(\frac{a\chi_\varepsilon^C\tilde{\chi}_R^B}{1+H_2^2+H_3^2}\right)\Op_h^w\left(1+H_2^2+H_3^2\right)+\mathcal{O}_{L^2\rightarrow L^2}(h).$$
Combining this composition rule with~\eqref{e:eigenvalue-conjugation} and~\eqref{e:apriori-semiclassical1}, one obtains the estimate
\begin{equation}\label{e:measure-pseudo2}
\left|\left\langle \Op_h^w(a\tilde{\chi}_R^B\chi_\varepsilon^C)u_h,u_h\right\rangle_{L^2}\right|\leq \left\|\Op_h^w\left(\frac{a\chi_\varepsilon^C\tilde{\chi}_R^B}{1+H_2^2+H_3^2}\right)\right\|_{L^2\rightarrow L^2}+\ml{O}_{\varepsilon,R}(h). 
\end{equation}
Moreover, by construction of our cutoff functions and by using Calder\'on-Vaillancourt Theorem one more time, one gets
$$\left|\left\langle \Op_h^w(a\tilde{\chi}_R^B\chi_\varepsilon^C)u_h,u_h\right\rangle_{L^2}\right|\leq \frac{C_{M,g,a}}{R\varepsilon^2}+\ml{O}_{\varepsilon,R}(h^{\frac{1}{2}}).$$
Hence, one ends up with
\begin{equation}\label{e:measure-with-cutoffs}\langle\nu_h^R,a\rangle=\left\langle \Op_h^w(a\tilde{\chi}_R^B\tilde{\chi}_\varepsilon^C)u_h,u_h\right\rangle_{L^2}+\ml{O}((R\varepsilon^2)^{-1})+\ml{O}_{\varepsilon,R}(h^{\frac{1}{2}}),
\end{equation}
and we can introduce the object of interest for our analysis:
\begin{equation}\label{e:measure-cone-infinity}
\langle\nu_h^{R,\varepsilon},a\rangle:=\left\langle \Op_h^w(a\tilde{\chi}_R^B\tilde{\chi}_\varepsilon^C)u_h,u_h\right\rangle_{L^2}.
\end{equation}
In conclusion, we have shown:
\begin{lemm}\label{l:limit-measure} With the above conventions, one has, for every $0<\varepsilon<1$,
 $$
 \langle \nu_\infty,a\rangle =\lim_{R\rightarrow +\infty}\lim_{h\rightarrow 0^+}\langle\nu_h^{R,\varepsilon},a\rangle.
$$
 \end{lemm}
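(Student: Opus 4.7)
The plan is to assemble two ingredients already at hand: the identity \eqref{e:QL-infinity} identifying $\langle \nu_\infty, a\rangle$ as the iterated limit of $\langle \nu_h^R, a\rangle$, and the comparison estimate \eqref{e:measure-with-cutoffs} which, together with the definition \eqref{e:measure-cone-infinity}, yields
\begin{equation*}
\langle \nu_h^R, a\rangle - \langle \nu_h^{R,\varepsilon}, a\rangle = \ml{O}\bigl((R\varepsilon^2)^{-1}\bigr) + \ml{O}_{R,\varepsilon}(h^{1/2}),
\end{equation*}
where the constant in the first remainder depends only on $a$ and the fixed geometric data, and the second remainder tends to $0$ as $h\to 0^+$ once $R$ and $\varepsilon$ are held fixed.

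First, for fixed $R>1$ and $\varepsilon \in (0,1)$, I would let $h\to 0^+$. The limit $\ell_R := \lim_{h\to 0^+}\langle \nu_h^R, a\rangle$ exists by the discussion surrounding \eqref{e:QL-infinity} (this is valid because $\chi_R^B$ is compactly supported in $p$, so $\nu_{h,R}$ is tested against the bounded measure $w$ via a fixed $\ml{C}_c^\infty$ symbol). The displayed identity then gives
\begin{equation*}
\limsup_{h\to 0^+}\bigl|\langle \nu_h^{R,\varepsilon}, a\rangle - \ell_R\bigr| \leq \frac{C_a}{R\varepsilon^2},
\end{equation*}
with the analogous bound for the $\liminf$. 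Next, I would send $R\to +\infty$ with $\varepsilon$ fixed. Since \eqref{e:QL-infinity} gives $\ell_R \to \langle \nu_\infty, a\rangle$ and $C_a/(R\varepsilon^2) \to 0$, both the $\limsup$ and the $\liminf$ of $\langle \nu_h^{R,\varepsilon}, a\rangle$ under the iterated limit coincide with $\langle \nu_\infty, a\rangle$, so the iterated limit itself exists and takes the claimed value.

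The one point requiring care is that, for fixed $(R,\varepsilon)$, the quantity $\langle \nu_h^{R,\varepsilon}, a\rangle$ need not admit a limit as $h\to 0^+$: the cut-off $\tilde\chi_\varepsilon^C$ is bounded but not compactly supported in $p$, so $w_h$ cannot be directly tested against the symbol $a\tilde\chi_R^B\tilde\chi_\varepsilon^C$ in the sense of $\ml{D}'(T^*\ml{U}_0)$. This is precisely why the statement is phrased as an iterated limit and why the $\varepsilon$-dependent constant in $\ml{O}((R\varepsilon^2)^{-1})$ is harmless: the non-converging part is uniformly controlled in $h$ by $C_a/(R\varepsilon^2)$ and is killed upon sending $R\to +\infty$ with $\varepsilon$ fixed. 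This is the main (and in fact only) subtlety; once it is handled by the $\limsup/\liminf$ squeeze above, no additional argument is required.
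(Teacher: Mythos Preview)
Your argument is correct and follows the same route as the paper: combine \eqref{e:QL-infinity} with \eqref{e:measure-with-cutoffs} and let $h\to 0^+$, then $R\to+\infty$. The paper simply states the lemma as a summary of the preceding computation and does not spell out the $\limsup/\liminf$ squeeze; your added care on this point is a harmless elaboration, consistent with the paper's convention of working along extracted subsequences.
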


\begin{rema}\label{r:role-epsilon}
Note that, so far, the parameter $\varepsilon>0$ does not play any particular role. However, it will become important when analyzing the invariance and support properties of $\nu_\infty$ where we will need to take the limit $\varepsilon \to 0$. 
\end{rema}

Finally, we record the following useful Lemma that follows from the proof of Lemma~\ref{l:symbol}:
\begin{lemm}\label{l:symbol-cone} Let $\mathcal{K}$ be a compact subset of $\IR^2\times\IS^1$. For every $0<\varepsilon\leq 1$, for every $N_0\geq 2$, and for every $(\alpha,\beta)\in\IZ_+^6$, one can find a positive constant $C_{\varepsilon, N_0, \mathcal{K}, \alpha,\beta}$ such that, for every $(q,p)=(x,y,z,\xi,\eta,\zeta)$ in $C_{2^{N_0}\varepsilon}(\mathcal{K})\setminus C_{2^{-N_0}\varepsilon}(\mathcal{K})$, one has
$$\left|\partial^\alpha_{q}\partial_{p}^\beta \left(\frac{H_1}{\sqrt{1+H_2^2+H_3^2}}\right)\right|\leq C_{\varepsilon, N_0, \mathcal{K},\alpha,\beta}\langle p\rangle^{-|\beta|},$$ 
where $\langle p\rangle:=(1+\xi^2+\eta^2+\zeta^2)^{\frac{1}{2}}.$ 
\end{lemm}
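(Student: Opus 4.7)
The plan is essentially to reproduce the computation already carried out in the proof of Lemma~\ref{l:symbol}, but tracking how the constants depend on $\varepsilon$ and $N_0$. The key geometric observation, which I would establish first, is that on the annulus $C_{2^{N_0}\varepsilon}(\mathcal{K})\setminus C_{2^{-N_0}\varepsilon}(\mathcal{K})$ the definition~\eqref{e:cone-eps} yields the two-sided bound
$$\frac{1}{2^{N_0}\varepsilon}\sqrt{1+H_2^2+H_3^2}<|H_1|\leq \frac{2^{N_0}}{\varepsilon}\sqrt{1+H_2^2+H_3^2},$$
so that $|H_1|\asymp \sqrt{1+H_2^2+H_3^2}$ with implicit constants depending only on $(\varepsilon,N_0)$. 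Combined with~\eqref{e-norm-comparison}, this also gives $\langle p\rangle\asymp\sqrt{1+H_2^2+H_3^2}\asymp |H_1|$ throughout the annulus.

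For the $q$-derivatives alone ($\beta=0$), I would argue by induction on $|\alpha|$ using the identities~\eqref{e:trivial-relation-H3},~\eqref{e:intertwine-H1H2},~\eqref{e:derivative-H1},~\eqref{e:derivative-H2}, mimicking the computation in Lemma~\ref{l:symbol}, to establish
$$\partial^\alpha_{x,y,z}\left(\frac{H_1}{\sqrt{1+H_2^2+H_3^2}}\right)=\sum_{j=0}^{|\alpha|}\frac{P_{j,\alpha}(H_1,H_2,H_3)}{(1+H_2^2+H_3^2)^{j+1/2}},$$
where $P_{j,\alpha}$ is a polynomial of total degree at most $2j+1$ whose coefficients are smooth functions of $(x,y,z)$ bounded on the compact set $\mathcal{K}$. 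The comparability relation from the first paragraph then bounds each summand by a constant depending only on $(\varepsilon,N_0,\mathcal{K},\alpha)$, which handles the case $\beta=0$.

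For the joint derivatives, I would proceed by a further induction on $|\beta|$. Since each $H_i$ is affine in $p=(\xi,\eta,\zeta)$, the first-order derivatives $\partial_p H_i$ are smooth functions of $(x,y,z)$ uniformly bounded on $\mathcal{K}$ and $\partial_p^\gamma H_i\equiv 0$ for $|\gamma|\geq 2$. Differentiating the above identity in $p$ produces new rational functions in $(H_1,H_2,H_3)$ with bounded $(x,y,z)$-coefficients; each application of $\partial_p$ either decreases the degree of the numerator polynomial by one while preserving the denominator, or increases the denominator's power of $(1+H_2^2+H_3^2)$ by one while bumping the numerator degree by one. In both cases one gains a factor of size $(1+H_2^2+H_3^2)^{-1/2}\asymp\langle p\rangle^{-1}$ on the annulus. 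Iterating $|\beta|$ times and invoking the comparability relations yields the claimed bound $C_{\varepsilon,N_0,\mathcal{K},\alpha,\beta}\langle p\rangle^{-|\beta|}$.

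The whole argument is routine once the comparability $|H_1|\asymp\sqrt{1+H_2^2+H_3^2}\asymp\langle p\rangle$ on the annulus is noted; the only mild obstacle is the combinatorial bookkeeping needed to verify that each $p$-differentiation genuinely contributes one uniform power of $\langle p\rangle^{-1}$, which ultimately reduces to the linearity of $H_1,H_2,H_3$ in $p$ together with the smoothness of $\lambda$ on~$\mathcal{K}$.
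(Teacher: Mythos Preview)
Your proposal is correct and follows essentially the same route as the paper: the lemma is recorded there as an immediate consequence of the proof of Lemma~\ref{l:symbol}, and your argument reproduces that proof verbatim (the same inductive formula $\partial_q^\alpha g=\sum_j P_{j,\alpha}/(1+H_2^2+H_3^2)^{j+1/2}$ with $\deg P_{j,\alpha}\le 2j+1$, then linearity of the $H_i$ in $p$ for the $\beta$-derivatives), the only new ingredient being the two-sided bound on the annulus, which you state correctly up to a harmless swap of strict/non-strict inequalities.
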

In other words, the function $\frac{H_1}{\sqrt{1+H_2^2+H_3^2}}$ belongs to an amenable class of symbols inside the ``cone'' $C_{2^{N_0}\varepsilon}(\mathcal{K})\setminus C_{2^{-N_0}\varepsilon}(\mathcal{K})$.

\subsection{Reduction to the region $ 1 \ll \vert H_1 \vert \lesssim h^{-1}$} 

We will now localize the phase-space distribution of the sequence $(u_h)$ in the sub-elliptic region $ 1 \ll \vert H_1 \vert \lesssim h^{-1}$. To do that, we introduce the cutoff functions, for $R_1>1$,
$$\rho_{R_1}:=\tilde{\chi}\left(\frac{hH_1}{R_1}\right),\ \text{and}\ \tilde{\rho}_{R_1}:=1-\rho_{R_1}.$$
The cut-off $\tilde{\rho}_{R_1}$ localizes the sequence $(u_h)$ in the region of interest to us. Let us first show that this last localization keeps the analysis in the admissible symbol class.

\begin{lemm}\label{l:cutoffH1} Let $a\in\ml{C}^{\infty}_c(\ml{U}_0)$. The functions $a\tilde{\chi}_R^B\tilde{\chi}_\varepsilon^C\rho_{R_1}$ and $a\tilde{\chi}_R^B\tilde{\chi}_\varepsilon^C\tilde{\rho}_{R_1}$ belong to the admissible symbol class $S_{\operatorname{cl}}^0(T^*(\IR^2\times\IS^1))$ with seminorms that are uniformly bounded for $0<h\leq 1$.
\end{lemm}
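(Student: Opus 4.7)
My plan is to reduce the verification to showing that $\rho_{R_1}$ alone satisfies symbol estimates of order zero on the support of $\tilde{\chi}_\varepsilon^C$, uniformly in $h\in(0,1]$, and then conclude by Leibniz rule. Indeed, $a\in\ml{C}^\infty_c(\ml{U}_0)$ is compactly supported and trivially lies in $S^0_{\operatorname{cl}}$; the factor $\tilde{\chi}_R^B$ is a symbol of order zero because its derivatives of positive order are supported where $H_1^2+H_2^2+H_3^2\simeq R$, so by \eqref{e-norm-comparison} one has $\langle p\rangle\simeq\sqrt{R}$ and the chain rule readily supplies the required $\langle p\rangle^{-|\beta|}$ decay; finally, $\tilde{\chi}_\varepsilon^C\in S^0_{\operatorname{cl}}$ by Lemma~\ref{l:symbol}. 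Since $\tilde{\rho}_{R_1}=1-\rho_{R_1}$, it will be enough to treat $\rho_{R_1}$.

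The key structural observation I will exploit is that $H_1$ is affine linear in $p=(\xi,\eta,\zeta)$, so that $\partial_{p_i}H_1$ depends only on $q$ and $\partial_p^\beta H_1\equiv 0$ for $|\beta|\geq 2$. A direct chain-rule computation then gives, for every $|\beta|\geq 1$,
$$
\partial_p^\beta\rho_{R_1}(q,p)=\tilde{\chi}^{(|\beta|)}\!\left(\tfrac{hH_1(q,p)}{R_1}\right)\left(\tfrac{h}{R_1}\right)^{|\beta|}\prod_{i}\bigl(\partial_{p_i}H_1(q)\bigr)^{\beta_i}.
$$
This expression is supported where $h|H_1|/R_1\in[1,2]$, hence where $h/R_1\asymp|H_1|^{-1}$. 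Combined with the fact that $|H_1|\asymp\langle p\rangle$ on $\supp(\tilde{\chi}_\varepsilon^C)\cap\supp(\tilde{\chi}_R^B)$ (a consequence of \eqref{e-norm-comparison} together with the defining inequality of $\tilde{\chi}_\varepsilon^C$ and of the fact that $\tilde{\chi}_R^B$ forces $|p|\gtrsim\sqrt{R}$), this already yields $|\partial_p^\beta\rho_{R_1}|\lesssim \langle p\rangle^{-|\beta|}$ uniformly in $h$.

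For the mixed derivatives $\partial_q^\alpha\partial_p^\beta\rho_{R_1}$, I will iterate the chain rule: each additional $q$-derivative either falls on a smooth bounded coefficient $\partial_{p_i}H_1(q)$, or produces a factor $\partial_q H_1$, which by \eqref{e:derivative-H1} and Lemma~\ref{l:polynomial} is a smooth combination of $H_1$ and $H_3$, hence of size $\ml{O}(\langle p\rangle)$ on $\supp(\tilde{\chi}_\varepsilon^C)$. Each such growth factor is however accompanied by one more derivative of $\tilde{\chi}$ and therefore by one extra factor $h/R_1\asymp\langle p\rangle^{-1}$ on the corresponding support, so the two effects cancel and the estimate $|\partial_q^\alpha\partial_p^\beta\rho_{R_1}|\leq C_{R_1,\varepsilon,\alpha,\beta}\langle p\rangle^{-|\beta|}$ persists. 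Applying Leibniz rule together with the symbol estimates for $a$, $\tilde{\chi}_R^B$ and $\tilde{\chi}_\varepsilon^C$ will then close the proof, with seminorms depending on $(R,R_1,\varepsilon,\ml{K},a)$ but independent of $h\in(0,1]$. The main (and essentially combinatorial) obstacle I anticipate is the Fa\`a di Bruno bookkeeping needed to verify that, at arbitrary order, every $\langle p\rangle$-growth produced by iterated $q$-derivatives is indeed matched by a compensating factor $h/R_1$ coming from an extra derivative of $\tilde{\chi}$; once this cancellation is identified, the remaining estimates are mechanical.
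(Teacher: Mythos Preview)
Your proof is correct and follows essentially the same approach as the paper: both arguments reduce to controlling derivatives of $\rho_{R_1}$ on the support of $a\tilde{\chi}_R^B\tilde{\chi}_\varepsilon^C$, exploit that on this set $h|H_1|\asymp R_1$ and $|H_1|\asymp\langle p\rangle$ (so each factor $h/R_1$ from the chain rule behaves like $\langle p\rangle^{-1}$), and use that $q$-derivatives of $H_1$ are linear in $H_1,H_2,H_3$ to balance the growth. One small slip: $\partial_z H_1=-H_2$ by \eqref{e:intertwine-H1H2}, so $\partial_q H_1$ involves $H_2$ as well as $H_1,H_3$, but this is harmless since $|H_2|\leq\varepsilon|H_1|$ on $\supp(\tilde{\chi}_\varepsilon^C)$ and the bound $\partial_q H_1=\ml{O}(\langle p\rangle)$ you actually use remains valid.
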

\begin{proof}
 We already know from Lemma~\ref{l:symbol} that $a\tilde{\chi}_\varepsilon^C$ belongs to $S_{\operatorname{cl}}^0(T^*(\IR^2\times\IS^1))$ and we have observed that $\tilde{\chi}_R^B$ belongs to $S_{\operatorname{cl}}^0(T^*(\IR^2\times\IS^1))$. Thus, we need to show that $\rho_{R_1}(hH_1)$ belongs to this class when restricted to the region of phase space given by the support of $a\tilde{\chi}_\varepsilon^C\tilde{\chi}_R^B$. Among other constraints, in this region we have that $(x,y,z)$ in $\ml{U}_0$ and
 \begin{equation}\label{e:support-cone}\frac{\varepsilon |H_1|}{\sqrt{1+H_2^2+H_3^2}}\geq 1.
\end{equation}
In other words, we need to show that the derivatives of $hH_1$ verify the properties of the class $S^{0}_{\text{cl}}(\IR^2\times\IS^1)$ under these support properties and the additional assumption that $\rho_{R_1}'(hH_1)\neq 0$, which leads to the additional constraint
 \begin{equation}\label{e:support-H1}
 R_1\leq h|H_1|\leq 2R_1.
\end{equation}
In view of~\eqref{e:trivial-relation-H3}, \eqref{e:intertwine-H1H2}, \eqref{e:derivative-H1} and \eqref{e:derivative-H2}, one can verify that all the derivatives of $\rho_{R_1}$ of order $l$ with respect $(x,y,z)$ are linear combinations of functions of the form
$$P(hH_1,hH_2,hH_3)\chi^{(k)}(hH_1/R_1),$$
where $P$ is a polynomial of degree $k$ with coefficients in $\mathcal{C}^{\infty}(\IR^2\times\IS^1)$ and with $k\leq l$. In particular, thanks to the support properties~\eqref{e:support-cone} and~\eqref{e:support-H1}, these quantities are bounded as expected. It now remains to differentiate these quantities with respect to $(\xi,\eta,\zeta)$. As $H_1$, $H_2$, $H_3$ are polynomials of degree $1$ in $(\xi,\eta,\zeta)$, it has the effect to lower the degree of the polynomial and to get a bound of order $h^{l'}$ where $l'$ is the number of derivatives with respect to these variables. Using~\eqref{e:support-cone} and~\eqref{e:support-H1} one more time together with~\eqref{e-norm-comparison}, this yields the expected decaying properties of the class $S_{\operatorname{cl}}^0(T^*(\IR^2\times\IS^1))$ with constants that are independent of $0<h\leq 1$.
\end{proof}
We next include these cutoff functions in~\eqref{e:measure-cone-infinity}. The goal is to verify that the contribution of the term
\begin{equation}\label{e:largeH1}
 \left\langle \Op_h^w\left(a\tilde{\chi}_\varepsilon^C\tilde{\chi}_R^B\tilde{\chi}\left(\frac{hH_1}{R_1}\right)\right)u_h,u_h\right\rangle_{L^2}
\end{equation}
is small as $R_1\rightarrow +\infty$. Notice, to this aim, that on the support of $a\tilde{\chi}_\varepsilon^C\tilde{\chi}_R^B$, the function $1/H_1$ belongs to the class of symbols $S^0_{\text{cl}}(T^*(\IR^2\times\IS^1))$. Hence, by the composition rules for pseudodifferential operators, one has
\begin{multline*}\left\langle \Op_h^w\left(a\tilde{\chi}_\varepsilon^C\tilde{\chi}_R^B\tilde{\chi}\left(\frac{hH_1}{R_1}\right)\right)u_h,u_h\right\rangle_{L^2}\\[0.2cm]
=\left\langle \Op_h^w\left(\frac{a\tilde{\chi}_\varepsilon^C\tilde{\chi}_R^B\tilde{\chi}\left(hH_1/R_1\right)}{hH_1}\right)\Op_h^w(hH_1)u_h,u_h\right\rangle_{L^2}+\ml{O}_{R,R_1,\varepsilon}(h).
\end{multline*}
Using then the a priori estimate~\eqref{e:apriori-semiclassical1} together with the Calder\'on-Vaillancourt Theorem, we find
$$\left\langle\Op_h^w\left(a\tilde{\chi}_\varepsilon^C\tilde{\chi}_R^B\tilde{\chi}\left(\frac{hH_1}{R_1}\right)\right)u_h,u_h\right\rangle_{L^2}
=\ml{O}(R_1^{-1})+\ml{O}_{R,R_1,\varepsilon}(h).
$$
Therefore, by another application of pseudodifferential calculus rules, we get finally that
\begin{equation}\label{e:measure-pseudo4}
\langle\nu_h^{R,\varepsilon},a\rangle=\left\langle \Op_h^w(a\tilde{\rho}_{R_1}(hH_1)\tilde{\chi}_\varepsilon^C\tilde{\chi}_R^B)u_h,u_h\right\rangle_{L^2(\ml{U}_0)}+\ml{O}(R_1^{-1})+\ml{O}_{R,R_1,\varepsilon}(h).
\end{equation}

\subsection{Adding a new variable $E=hH_1$}\label{s:new_variable}
To complete the preliminaries concerning the phase-space localization of the measure $\nu_h^{R,\varepsilon}$, we lift slightly our analysis by introducing more general distributions in terms of a new variable $h H_1$ for the symbols $a$ considered above. Namely, we set 
\begin{equation}\label{e:wigner-H1}\mu_{h}^{R,\varepsilon}: b\in\ml{C}_c^{\infty}(\ml{U}_0\times\IR)\mapsto\left\langle \Op_h^w(b(x,y,z,hH_1)\tilde{\chi}_R^B\tilde{\chi}_\varepsilon^C)u_h,u_h\right\rangle_{L^2(\ml{U}_0)}.\end{equation}
The same argument as in the proof of Lemma~\ref{l:cutoffH1} shows that $b(x,y,z,hH_1)\tilde{\chi}_\varepsilon^C\tilde{\chi}_R^B$ belongs to $S^0_{\text{cl}}(T^*(\IR^2\times\IS^1))$. In particular, this defines a bounded sequence in $\mathcal{D}^\prime(\ml{U}_0\times\IR)$. Thus, up to another extraction, we may suppose that $\mu_{h}^{R,\varepsilon}$ converges (for the weak-$\star$ topology) to some distribution $\mu^{R,\varepsilon}$. Thanks to the Garding inequality~\eqref{e:Garding0}, this is a positive distribution, thus a finite measure. Moreover, since the sequence $(\psi_h)$ is normalized (and hence $(u_h)$ is bounded), this defines a finite measure. Up to another extraction, we can suppose that $\mu^{R,\varepsilon}$ weakly converges to some limit measure $\mu^{\varepsilon}$ as $R\rightarrow +\infty$. Coming back to~\eqref{e:measure-pseudo4}, we find that
\begin{equation}\label{e:comparison-measure0}\langle\nu_\infty,a\rangle=\lim_{R\rightarrow+\infty}\lim_{h\rightarrow 0^+}\langle\nu_h^{R,\varepsilon},a\rangle=\langle \mu^{\varepsilon}, a\tilde{\rho}_{R_1} (E)\rangle+\ml{O}(R_1^{-1}).
 \end{equation}
Applying the dominated convergence Theorem, one finds that, for all $\varepsilon>0$ (small enough),
\begin{equation}\label{e:comparison-measure}\forall a\in\ml{C}^{\infty}_c(\ml{U}_0),\quad\langle\nu_\infty,a\rangle=\langle\mu^{\varepsilon},a\rangle,
\end{equation}
so that our analysis boils down to the description of the measure $\mu^{\varepsilon}$. Finally, up to another extraction as $\varepsilon\rightarrow 0^+$, we can suppose that $\mu^{\varepsilon}$ converges to some (finite) Radon measure on $\ml{U}_0\times\IR$, i.e.
\begin{equation}\label{e:full-wigner-H1}
 \forall b\in\ml{C}_c^{\infty}(\ml{U}_0\times\IR),\ \langle \mu_\infty,b\rangle=\lim_{\varepsilon\rightarrow 0^+}\lim_{R\rightarrow+\infty}\lim_{h\rightarrow 0^+}\langle\nu_h^{R,\varepsilon},b\rangle.
\end{equation}
From~\eqref{e:comparison-measure0}, one has 
\begin{equation}\label{e:disintegration}
\forall a\in\ml{C}^0_c(\ml{U}_0),\quad \int_{\ml{U}_0}a(q)d\nu_{\infty}(q)=\int_{\ml{U}_0\times\IR}a(q)d\mu_{\infty}(q,E),
\end{equation}
and our analysis thus boils down to the properties of the extended measure $\mu_\infty$. 

\begin{rema}
 As explained in Remark~\ref{r:role-epsilon}, the fact that we take $\varepsilon\rightarrow 0^+$ is not important so far but will turn to be later on.
\end{rema}

\section{Support of the limit measure}\label{s:support}

Before describing the propagation and invariance properties of $\nu_\infty$, we discuss first the support properties of $\mu_\infty$ along the new variable $E\in \IR$. More precisely, the goal of this section is to prove the following:

\begin{prop}
\label{p:support_properties}
The measure $\mu_\infty$ defined in~\eqref{e:full-wigner-H1} decomposes as
$$
\mu_\infty(q,E) = \overline{\mu}_\infty(q,E) + \sum_{k=0}^\infty \big( \mu_{k,\infty}^+(q,E) + \mu_{k,\infty}^-(q,E) \big),
$$
where $\overline{\mu}_\infty$ and $(\mu_{k,\infty}^\pm)_{k\geq 0}$ are finite non-negative Radon measures on $\ml{M}\times\IR$ satisfying the
following concentration properties:
\begin{enumerate}[(S.1)]

\item $\operatorname{supp} \overline{\mu}_\infty \subset \mathcal{M}_{\lambda_0,W} \times \{ 0 \}$;
\medskip

\item for every $k \in \mathbb{Z}_+$, $ \operatorname{supp} \mu_{k,\infty}^\pm \subset \mathcal{H}_\pm^{-1}(2k+1) \subset \mathcal{U}_{\lambda_0,W}\times\IR_{\pm}^*.$
\end{enumerate}
\end{prop}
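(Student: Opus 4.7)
The plan is to exploit a ladder-operator structure hidden in the eigenvalue equation. I introduce
\[
\widehat{B}_h := \Op_h^w(H_2 + iH_3), \quad \widehat{B}_h^* := \Op_h^w(H_2 - iH_3), \quad \widehat{H}_j := \Op_h^w(H_j),
\]
and the Moyal calculus together with $\{H_2, H_3\} = -H_1$ yields the symbolic identities $\widehat{B}_h^*\widehat{B}_h = \Op_h^w(H_2^2 + H_3^2) - h\widehat{H}_1 + O(h^2)$, $\widehat{B}_h\widehat{B}_h^* = \Op_h^w(H_2^2 + H_3^2) + h\widehat{H}_1 + O(h^2)$, and the key commutator $[\widehat{B}_h, \widehat{B}_h^*] = 2h\widehat{H}_1 + O(h^2)$. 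Combining this with $\widehat{P}_{h,\lambda} = \Op_h^w(H_2^2 + H_3^2 + hQH_1 + W) + O(h^2)$ and $\widehat{P}_{h,\lambda}u_h = \lambda_h u_h$ produces the working identity
\[
\widehat{B}_h^*\widehat{B}_h \, u_h = \bigl(\lambda_h - W - (1+Q)h\widehat{H}_1\bigr) u_h + O(h^2)u_h,
\]
together with its companion involving $\widehat{B}_h\widehat{B}_h^*$ and the factor $(Q-1)$.

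First, I split $\mu_\infty = \overline{\mu}_\infty + \mu_\infty^+ + \mu_\infty^-$ by testing against $\chi(E/\delta)$ and $(1-\chi(E/\delta))\mathbf{1}_{\pm E > 0}$ and letting $\delta \to 0^+$, producing three non-negative Radon measures. To localize $\overline{\mu}_\infty$ inside $\mathcal{M}_{\lambda_0, W} \times \{0\}$, I pair the working identity against $\Op_h^w(b(q, hH_1))u_h$ with $b \geq 0$ supported near $\{E = 0\}$. Sharp G\aa rding rewrites the left-hand side as $\langle \widehat{b}\widehat{B}_h u_h, \widehat{B}_h u_h\rangle \geq -O(h)$, while the right-hand side converges to $\int b(q, E)(\lambda_0 - W - (1+Q)E)\,d\mu_\infty$; restricting $b$ to $\{E = 0\}$ kills the $(1+Q)E$ contribution and forces $\int b(q, 0)(\lambda_0 - W)\,d\overline{\mu}_\infty \geq 0$ for every $b \geq 0$, whence $\lambda_0 - W \geq 0$ on $\supp(\overline{\mu}_\infty)$.

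For the quantization of $\mu_\infty^\pm$, I iterate the ladder relations. Using $[\widehat{N}_h, \widehat{B}_h] = -2h\widehat{H}_1\widehat{B}_h + O(h^2)$ with $\widehat{N}_h := \widehat{B}_h^*\widehat{B}_h$, a straightforward induction gives $\widehat{N}_h\widehat{B}_h^k = \widehat{B}_h^k(\widehat{N}_h - 2kh\widehat{H}_1) + O(h)$, and hence
\[
(\widehat{B}_h^*)^n\widehat{B}_h^n = \prod_{j=0}^{n-1}(\widehat{N}_h - 2jh\widehat{H}_1) + O(h).
\]
Applying the working identity factor-by-factor and using the algebraic identity $\lambda_0 - W - (1+Q)E = E(\mathcal{H}_+ - 1)$, this yields microlocally $(\widehat{B}_h^*)^n\widehat{B}_h^n u_h \approx E^n \prod_{j=0}^{n-1}(\mathcal{H}_+ - (2j+1)) u_h$. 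Pairing with $\Op_h^w(a(q, hH_1))u_h$ for $a \geq 0$ and using $(\widehat{B}_h^*)^n\widehat{B}_h^n = (\widehat{B}_h^n)^*\widehat{B}_h^n \geq 0$ together with sharp G\aa rding, the limit $h \to 0^+$ followed by $R \to +\infty$, $\varepsilon \to 0^+$ gives
\[
\int a(q, E)\,E^n \prod_{j=0}^{n-1}(\mathcal{H}_+(q,E) - (2j+1))\,d\mu_\infty^+ \geq 0, \qquad \forall\, n \geq 1,\ a \geq 0.
\]
Since $E > 0$ on $\supp(\mu_\infty^+)$, this forces $\prod_{j=0}^{n-1}(\mathcal{H}_+ - (2j+1)) \geq 0$ $\mu_\infty^+$-a.e., so $\mathcal{H}_+ \in \{1, 3, \ldots, 2n-1\} \cup [2n+1, +\infty)$; as $\mathcal{H}_+$ is locally bounded on any compact set contained in $\mathcal{M} \times \IR_+^*$, sending $n \to +\infty$ yields $\mathcal{H}_+ \in \{2k+1 : k \in \IZ_+\}$ on $\supp(\mu_\infty^+)$. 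The decomposition follows by setting $\mu_{k, \infty}^+ := \mathbf{1}_{\mathcal{H}_+^{-1}(2k+1)}\mu_\infty^+$. The symmetric computation with $\widehat{B}_h^n(\widehat{B}_h^*)^n$ and $\lambda_0 - W - (Q-1)E = -E(\mathcal{H}_- - 1)$ handles $\mu_\infty^-$ via the levels $\mathcal{H}_-^{-1}(2k+1)$. Finally, on $\mathcal{H}_\pm^{-1}(2k+1)$ the identity $\lambda_0 - W = \pm E(2k+1 \pm Q)$ combined with $\|Q\|_{\mathcal{C}^0} < 1 \leq 2k+1$ forces $\lambda_0 - W > 0$, i.e., $\supp(\mu_{k, \infty}^\pm) \subset \mathcal{U}_{\lambda_0, W} \times \IR_\pm^*$.

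The technical heart of the argument lies in the rigorous handling of the $O(h)$ remainders in the iterated ladder identity. The operators $\widehat{B}_h^n$ are unbounded on $L^2$, so the computation must be carried out in the presence of the cutoffs $\tilde{\chi}_R^B\tilde{\chi}_\varepsilon^C$ from Section \ref{s:cutoff}, inside the conic region where Lemma \ref{l:symbol} and Lemma \ref{l:cutoffH1} place the composite symbols in the admissible class $S^0_{\operatorname{cl}}$ with seminorms uniform in $h$. A priori $L^2$ bounds on $\widehat{a}\widehat{B}_h^n u_h$ for each fixed $n$ must be produced inductively from \eqref{e:apriori-semiclassical1}, exploiting that on $\supp(a)$ the rescaled variable $E = hH_1$ is bounded and bounded away from zero in the critical regime.
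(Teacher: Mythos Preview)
Your approach is essentially the paper's: both exploit the ladder structure $A_h=\Op_h^w(H_2+iH_3)$, $A_h^*$ inherited from the harmonic oscillator, together with the factorizations $A_h^*A_h=\Op_h^w(H_2^2+H_3^2)-h\Op_h^w(H_1)+O(h^2)$ and $[A_h,A_h^*]=2h\Op_h^w(H_1)$. The organization differs. The paper introduces auxiliary distributions $\varrho_k^\pm(b):=\langle \Op_h^w(b\,\tilde\chi_R^B\tilde\chi_\varepsilon^C)\,A_h^k u_h,\,A_h^k u_h\rangle$ (resp.\ with $(A_h^*)^k$), shows inductively that their limits are finite nonnegative Radon measures, and derives the one-step recursion
\[
\varrho_k^\pm\!\bigl(b\,(\pm E(2k+1\pm Q)+W-\lambda_0)\bigr)=-\varrho_{k+1}^\pm(b).
\]
This directly yields $\supp\varrho_k^\pm\setminus\supp\varrho_{k+1}^\pm\subset\mathcal{H}_\pm^{-1}(2k+1)$ without any polynomial sign analysis.

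Your product formula $(\widehat{B}_h^*)^n\widehat{B}_h^n=\prod_j(\widehat{N}_h-2jh\widehat{H}_1)+O(h)$ is where things are loose. The clean harmonic-oscillator identity requires $[\widehat{H}_1,\widehat{B}_h]=0$, but here $\{H_1,H_2+iH_3\}=-KH_3+iH_2$, so $[\widehat{H}_1,\widehat{B}_h]$ is $h$ times an order-one operator, \emph{not} smaller than the terms you retain. Likewise, the ``factor-by-factor'' substitution $\widehat{N}_h\leadsto \lambda_h-W-(1+Q)h\widehat{H}_1$ is only valid when acting on $u_h$; commuting the remaining factors past $W$ and $Q\widehat{H}_1$ generates correction terms of the same size. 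The paper confronts exactly these commutators (computing $[A_h^k,\Op_h^w(W)]$ and $[A_h^k,\Op_h^w(hQH_1)]$ as sums of polynomials in $(hH_1,hH_2,hH_3)$ composed with lower powers $A_h^{k-1-j}$) and absorbs them via the inductive $L^2$ bound on $\|\Op_h^w(b\cdot\text{cutoffs})A_h^j u_h\|$ that you correctly flag in your last paragraph but do not carry out.

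There is also a concrete slip in your polynomial step: the single-$n$ inequality $\prod_{j=0}^{n-1}(\mathcal{H}_+-(2j+1))\geq 0$ does \emph{not} force $\mathcal{H}_+\in\{1,\ldots,2n-1\}\cup[2n+1,\infty)$. For $n=3$ the polynomial is nonnegative on all of $[1,3]\cup[5,\infty)$. The correct conclusion comes only from intersecting over all $n$ (or, equivalently, combining consecutive $n$'s inductively: $p_{n+1}(x)=p_n(x)(x-(2n+1))$). The paper's one-step recursion avoids this issue entirely by never forming the product.
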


Recall that $\ml{M}_{\lambda_0,W}:=\{\lambda_0-W\geq 0\}$ is the classical allowed region, $\mathcal{U}_{\lambda_0,W} = \{ \lambda_0 - W > 0 \}$, and that $\ml{H}_{\pm}$ was defined in~\eqref{e:preserved-quantity}. Even if we do not explicitly use it, the proof of this result relies implicitely on the fact that our operators are locally modeled on the $3$-dimensional Heisenberg group. In particular, the presence of the quantum levels $2k+1$ is just a manifestation that the Heisenberg group is associated with the one-dimensional harmonic oscillator.

\subsection{Preliminary lemmas}
\label{s:preliminary_section}

We first define the following creation and annihilation type operators (ladder operators):
$$A_h:=\Op_h^w(H_2+iH_3)\quad\text{and}\quad A_h^*:=\Op_h^w(H_2-iH_3),$$
so that
\begin{equation}\label{e:AAstar}
\Op_h^w(H_2^2+H_3^2)=A_h^*A_h+h\Op_h^w(H_1)+h^2c_0=A_hA_h^*-h\Op_h^w(H_1)+h^2c_0,
\end{equation}
where $c_0$ is a smooth compactly supported and real-valued function on $\IR^2\times\IS^1$ (recall that $\lambda\equiv 0$ outside a compact set containing the isothermal neighborhood $U_0$) which is independent of $h$. Notice that, by \eqref{e:commutators-hamiltonian},
\begin{equation}
\label{e:commutator_A_A*}
[A_h, A_h^*] = 2h \Op_h^w(H_1).
\end{equation} 
We begin with the following lemma:
\begin{lemm}\label{l:A:power-k} Let $k\geq 1$. Then
$$A_h^k=\Op_h^w\left((H_2+iH_3)^k+\sum_{j=1}^{k}P_{j,k,h}(hH_1,hH_2,hH_3)(H_2+iH_3)^{k-j}\right),$$
where $P_{j,k,h}(u,v,w)$ is a polynomial with coefficients depending polynomially on $h$ and smoothly on $(x,y,z)\in\IR^2\times\IS^1$ (with uniformly bounded derivatives). 
\end{lemm}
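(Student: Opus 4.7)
I will argue by induction on $k$. The base case $k=1$ is immediate with $P_{j,1,h}\equiv 0$, since by definition $A_h=\Op_h^w(f)$ with $f:=H_2+iH_3$. For the inductive step, write $A_h^{k+1}=\Op_h^w(f)\circ A_h^k$ and compute the Weyl symbol
$$
\sigma(A_h^{k+1}) = f\sharp_w \sigma(A_h^k)
$$
via the Moyal product. The key structural fact is that $f$ is linear in the cotangent variables $p=(\xi,\eta,\zeta)$, hence $\partial_p^\alpha f\equiv 0$ for $|\alpha|\geq 2$; combined with the inductive hypothesis, which makes $\sigma(A_h^k)$ a polynomial of degree $k$ in $p$, this forces the Moyal expansion to reduce to a finite sum in which only multi-indices $\alpha$ with $|\alpha|\leq 1$ act on $f$. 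The leading ($n=0$) term of that expansion equals
$$
f\cdot\sigma(A_h^k)=f^{k+1}+\sum_{j=1}^{k}P_{j,k,h}(hH_1,hH_2,hH_3)\,f^{k+1-j},
$$
which is already of the claimed form at level $k+1$.

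To handle the corrections at order $h^n$ for $n\geq 1$, I will invoke the identities \eqref{e:trivial-relation-H3}, \eqref{e:intertwine-H1H2}, \eqref{e:derivative-H1} and \eqref{e:derivative-H2}: they ensure that for every multi-index $\beta$, $\partial_q^\beta f$ is a linear combination of $H_1,H_2,H_3$ with smooth $(x,y,z)$-coefficients having uniformly bounded derivatives, while $\partial_p^\alpha\partial_q^\beta f$ with $|\alpha|=1$ is a smooth function of $(x,y,z)$ alone. On the $\sigma(A_h^k)$ side, Leibniz and the chain rule applied to $\partial_p^\beta\bigl(P_{j,k,h}(hH_1,hH_2,hH_3)\, f^{k-j}\bigr)$ produce a finite sum of terms of shape
$$
h^m\cdot\tilde c(x,y,z)\cdot \tilde Q(hH_1,hH_2,hH_3)\cdot f^{k-j-|\beta_2|},\qquad m+|\beta_2|=|\beta|,
$$
because each $\partial_p$ applied to $P(hH_i)$ releases one factor of $h$ while reducing $P$'s degree, whereas derivatives of $f$ in $p$ are smooth functions of $(x,y,z)$ alone. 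Pairing one $H_i$-factor coming from $\partial_q^\beta f$ with one $h$ from the prefactor $h^n$ converts it into $hH_i$; the remaining $h^{n-1+m}\geq 1$ ensures the term reorganizes as $P_{j',k+1,h}(hH_1,hH_2,hH_3)\,f^{(k+1)-j'}$ for some $1\leq j'\leq k+1$. Summing all contributions yields the claimed identity at level $k+1$.

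The main obstacle I anticipate is the combinatorial bookkeeping: ensuring every $h$-exponent stays non-negative, so that the $P_{j,k+1,h}$ are genuinely polynomial in $h$, and that the $(x,y,z)$-coefficients inherit smoothness and uniformly bounded derivatives. Both properties are guaranteed by the matching between the $h^n$ of each Moyal term and the $n$ derivatives in $p$ on either side --- every extraction of an $H_i$ that would otherwise destroy $h$-positivity is automatically charged by a Moyal $h$-weight --- together with the closure of the algebra generated by $\{H_1,H_2,H_3\}$ under $q$-differentiation with smooth, uniformly bounded coefficients, which is precisely the content of the identities recalled above. No deeper algebraic input is needed beyond these identities and the commutation relations \eqref{e:commutators-hamiltonian}.
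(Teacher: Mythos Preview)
Your proposal is correct and follows essentially the same approach as the paper: induction on $k$, composing $A_h^{k}$ with $A_h$ and expanding via the Weyl/Moyal composition formula, then exploiting that $f=H_2+iH_3$ is linear in the cotangent variables so only finitely many terms survive. The paper composes on the right ($A_h^{k+1}=A_h^k\circ A_h$) whereas you compose on the left, and you are considerably more explicit about the $h$-bookkeeping than the paper, which simply records the shape $A_\ell(D)=(\partial_{p_1}\!\cdot\!\partial_{q_2})^\ell-\ell(\partial_{q_1}\!\cdot\!\partial_{p_2})(\partial_{p_1}\!\cdot\!\partial_{q_2})^{\ell-1}$ and concludes ``by induction''.

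One small omission to patch: you only analyze $\partial_p^\beta$ acting on $\sigma(A_h^k)$, but when the single allowed $\partial_{p_1}$ lands on $f$, the paired $\partial_{q_2}$ lands on $\sigma(A_h^k)$. That $q$-derivative hits either the smooth coefficients of $P_{j,k,h}$ (harmless), or $P_{j,k,h}(hH_\bullet)$ via the chain rule (producing an $h\,\partial_q H_\ell$, which by \eqref{e:derivative-H1}--\eqref{e:derivative-H2} is linear in $hH_\bullet$), or $f^{k-j}$ (producing a single bare $H_\ell$ through $\partial_q f$). In the last case the same $h$-pairing you describe applies: one $h$ from the Moyal weight $h^n$ absorbs the bare $H_\ell$, and the remaining exponent $n-1+m\geq 0$ (not ``$\geq 1$'' as written) keeps the coefficient polynomial in $h$. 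With this addition your count is complete.
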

In fact, modulo some extra work, we could be slightly more precise on the nature of the polynomials as we know that the full symbol is a polynomial of degree $k$ in the cotangent variables $(\xi,\eta,\zeta)$. Yet, as it is not necessary for our analysis, we do not try to be more precise and we just keep track of the informations that are relevant for our proofs.
\begin{proof}
Recall that
$$A_h=\Op_h^w(H_2+iH_3)=\frac{h}{i}X_\perp+hV+ihX_\perp(\lambda).$$
In particular, $A_h^k$ is a differential operator of order $k$ for every $k\geq 1$, and its symbol is polynomial of degree $\leq k$ in the cotangent variables $(\xi,\eta,\zeta)$. We now proceed by induction and suppose that the lemma is true for a given $k\geq 1$. Using the composition rule from Theorem~\ref{t:composition}, we can write
\begin{align*}
A_h^{k+1} &  =\Op_h^w\left((H_2+iH_3)^k+\sum_{j=1}^{k}P_{j,k,h}(hH_1,hH_2,hH_3)(H_2+iH_3)^{k-j}\right)\Op_h^w(H_2+iH_3)\\[0.2cm]
&  =\Op_h^w\left((H_2+iH_3)^{k+1}+\sum_{j=1}^{k}P_{j,k,h}(hH_1,hH_2,hH_3)(H_2+iH_3)^{k+1-j}\right) \\[0.2cm]
& \quad +\Op_h^w\left(R_k(h)\right),
\end{align*} 
where 
$$R_k(h)=
\sum_{\ell=1}^{k+1}\sum_{j=1}^{k}\frac{h^{\ell}}{\ell!}A(D)^\ell\left(\left(P_{j,k,h}(hH_1,hH_2,hH_3)(H_2+iH_3)^{k-j}\right)(H_2+iH_3)\right).$$
Here the sum stops at $\ell=k+1$ as each symbol is a polynomial of respective degree $k$ and $1$ in the $(\xi,\eta,\zeta)$ variables. Recall that $A(D)=\frac{1}{2i}\left(\partial_{p_1}\cdot\partial_{q_2}-\partial_{q_1}\cdot\partial_{p_2}\right)$ so that the symbols of interest for the remainder take the form
$$A_\ell(D)\left(\left(P_{j,k,h}(hH_1,hH_2,hH_3)(H_2+iH_3)^{k-j}\right)(H_2+iH_3)\right),$$
with $A_\ell(D) = (\partial_{p_1}\cdot\partial_{q_2})^\ell-\ell(\partial_{q_1}\cdot\partial_{p_2})(\partial_{p_1}\cdot\partial_{q_2})^{\ell-1}$.
By induction, we get the expected expression for the terms of order $h^{l}$ in the asymptotic expansion.
\end{proof}

As a corollary of this lemma and of the composition rule for pseudodifferential operators, we also find:
\begin{coro}\label{c:A:power-k} Let $k\geq 1$. Then
$$\Op_h^w((H_2+iH_3)^k)=A_h^k+\sum_{j=1}^{k}\Op_h^w\left(\tilde{P}_{j,k,h}(hH_1,hH_2,hH_3)\right)A_h^{k-j},$$
where $\tilde{P}_{j,k,h}(u,v,w)$ is a polynomial with coefficients depending polynomially on $h$ and smoothly on $(x,y,z)\in\IR^2\times\IS^1$ (with uniformly bounded derivatives).
\end{coro}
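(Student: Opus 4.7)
The plan is to proceed by induction on $k \geq 1$, using Lemma~\ref{l:A:power-k} as the starting inversion formula. For the base case $k = 1$, we have $A_h = \Op_h^w(H_2 + iH_3)$ by definition, so the identity holds trivially with an empty sum. Assume now the result for all exponents strictly less than $k$. Isolating $\Op_h^w((H_2+iH_3)^k)$ in Lemma~\ref{l:A:power-k} yields
$$\Op_h^w((H_2+iH_3)^k) = A_h^k - \sum_{j=1}^k \Op_h^w\bigl(P_{j,k,h}(hH_1,hH_2,hH_3)(H_2+iH_3)^{k-j}\bigr).$$
For $j = k$ the term reads $\Op_h^w(P_{k,k,h}(hH_1,hH_2,hH_3)) \cdot A_h^0$, which is already of the claimed form.

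For $1 \leq j \leq k-1$, I would apply the Weyl composition rule (Theorem~\ref{t:composition}) in reverse to write
$$\Op_h^w\bigl(P_{j,k,h}(hH_1,hH_2,hH_3)(H_2+iH_3)^{k-j}\bigr) = \Op_h^w(P_{j,k,h}(hH_1,hH_2,hH_3))\,\Op_h^w\bigl((H_2+iH_3)^{k-j}\bigr) + \Op_h^w(R_{j,k,h}),$$
where $R_{j,k,h}$ is a finite sum (the expansion terminates because both symbols are polynomial in $(\xi,\eta,\zeta)$) of $h^\ell$-prefactored products of mixed $(q,p)$-derivatives of the two factors. Using the identities~\eqref{e:trivial-relation-H3}, \eqref{e:intertwine-H1H2}, \eqref{e:derivative-H1}, \eqref{e:derivative-H2}, together with the linearity of $H_1, H_2, H_3$ in $(\xi,\eta,\zeta)$, each $(\xi,\eta,\zeta)$-derivative of $P_{j,k,h}(hH_1,hH_2,hH_3)$ produces a factor of $h$ times another polynomial in $(hH_1,hH_2,hH_3)$ with smooth $(x,y,z)$-coefficients, while each $(x,y,z)$-derivative of $(H_2+iH_3)^{k-j}$ yields a polynomial of strictly lower total degree in $(H_2,H_3)$ with smooth coefficients. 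Hence $R_{j,k,h}$ is a linear combination of symbols of the form $\tilde Q(hH_1,hH_2,hH_3)(H_2+iH_3)^m$ with $m < k-j$.

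Applying the induction hypothesis both to $\Op_h^w((H_2+iH_3)^{k-j})$ (which equals $A_h^{k-j}$ plus a sum $\sum_i \Op_h^w(\cdot)\,A_h^{k-j-i}$) and to each of the residual symbols in $R_{j,k,h}$, and then collecting all resulting operators by descending $A_h$-power, we arrive at the claimed expression. \textbf{The main bookkeeping obstacle} is verifying that the class of polynomials in $(hH_1, hH_2, hH_3)$ with smooth $(x,y,z)$-dependent coefficients depending polynomially on $h$ is stable under all the operations used: Weyl composition remainders, differentiation along $(q,p)$ of $H_1, H_2, H_3$, and multiplication. This stability is however ensured by the linearity of the $H_j$'s in the cotangent variables together with the derivative identities~\eqref{e:trivial-relation-H3}--\eqref{e:derivative-H2}, and the induction terminates after finitely many steps since each substitution strictly decreases the exponent of $(H_2+iH_3)$.
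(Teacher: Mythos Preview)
Your argument is correct and follows exactly the route the paper indicates: the paper's own proof consists of the single sentence ``a corollary of this lemma and of the composition rule for pseudodifferential operators,'' and your induction on $k$ using Lemma~\ref{l:A:power-k} together with the Weyl composition formula is precisely the intended unpacking of that sentence. One phrasing to tighten: when you write that ``each $(x,y,z)$-derivative of $(H_2+iH_3)^{k-j}$ yields a polynomial of strictly lower total degree in $(H_2,H_3)$,'' note that $\partial_z(H_2+iH_3)=H_1$, so the total degree in $(H_1,H_2,H_3)$ is preserved; what actually drops is the exponent of $(H_2+iH_3)$, and the stray $H_i$ factor is then absorbed into the $\tilde Q(hH_1,hH_2,hH_3)$ part using one of the available powers of $h$ coming from $h^\ell\cdot\partial_p^\alpha a$ --- which is exactly what you do in the next sentence, so the logic is sound.
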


We now turn to the commutation properties of $A_h^k$ with the operators of interest for our analysis:
\begin{lemm}\label{l:commute-A-power-k} Let $Q$ and $W$ be two smooth functions on $\IR^2\times\IS^1$ whose derivatives are uniformly bounded.  Then, for every $k\geq 1$, 
$$[A_h^k,\Op_h^w(hQH_1)]=h\sum_{j=0}^{k-1}\Op_{h}^w(\mathbf{P}_{j,k,h}(hH_1,hH_2,hH_3))A_h^{k-1-j},$$
and
$$[A_h^k,\Op_h^w(W)]=h\sum_{j=0}^{k-1}\Op_{h}^w(\tilde{\mathbf{P}}_{j,k,h}(hH_1,hH_2,hH_3))A_h^{k-1-j},$$
where $\mathbf{P}_{j,k,h}(u,v,w)$ and $\tilde{\mathbf{P}}_{j,k,h}(u,v,w)$ are polynomials whose coefficients depend polynomially on $h$ and smoothly on $(x,y,z)\in \IR^2\times\IS^1$ (with derivatives that are uniformly bounded). 
\end{lemm}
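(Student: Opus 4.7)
The plan is to argue by induction on $k \geq 1$, via the Leibniz-type identity
\begin{equation*}
[A_h^k, C] = A_h[A_h^{k-1}, C] + [A_h, C] A_h^{k-1},
\end{equation*}
applied to $C = \Op_h^w(hQH_1)$ and $C = \Op_h^w(W)$. The starting observation is that the Weyl symbol $H_2 + iH_3$ of $A_h$ is linear in the fiber variables $(\xi,\eta,\zeta)$, so every higher order Moyal correction in Theorem~\ref{t:composition} vanishes and for any smooth symbol $b$ one has the exact identity
\begin{equation*}
[A_h, \Op_h^w(b)] = \frac{h}{i}\Op_h^w\!\bigl(\{H_2+iH_3, b\}\bigr).
\end{equation*}

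The key algebraic step is then a \emph{stability lemma}: the Poisson bracket $\{H_2+iH_3, P(hH_1,hH_2,hH_3)\}$ of the symbol of $A_h$ with any polynomial $P$ in the three variables $(hH_1, hH_2, hH_3)$ whose coefficients are smooth bounded functions of $(x,y,z)$ is again a polynomial of the same form, without any extra factor of $h$ being produced. This follows from \eqref{e:commutators-hamiltonian}, which gives the exact identities $\{H_2+iH_3, hH_1\} = K(hH_3) - i(hH_2)$, $\{H_2+iH_3, hH_2\} = i(hH_1)$ and $\{H_2+iH_3, hH_3\} = -(hH_1)$, combined with the observation that $\partial_{(\xi,\eta,\zeta)}(H_2+iH_3)$ depends only on $(x,y,z)$, so that $\{H_2+iH_3, f(x,y,z)\}$ is itself a function of $(x,y,z)$ alone (a polynomial of degree zero in $(hH_1, hH_2, hH_3)$).

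With this lemma in hand, the base case $k=1$ is immediate: $\{H_2+iH_3, hQH_1\}$ is a degree one polynomial in $(hH_1,hH_2,hH_3)$ with smooth coefficients, while $\{H_2+iH_3, W\}$ is a degree zero one, which gives the claimed $h\Op_h^w(\mathbf{P}_{0,1,h})$ and $h\Op_h^w(\tilde{\mathbf{P}}_{0,1,h})$. For the inductive step, the inductive hypothesis expresses $[A_h^{k-1}, C]$ as a sum of the required form, and one commutes each leading factor $A_h$ past the pseudodifferential operators via
\begin{equation*}
A_h\Op_h^w(P) = \Op_h^w(P)A_h + \frac{h}{i}\Op_h^w\!\bigl(\{H_2+iH_3, P\}\bigr).
\end{equation*}
The stability lemma ensures that each correction term is again $h$ times $\Op_h^w$ of a polynomial in $(hH_1, hH_2, hH_3)$ acting on a lower power $A_h^{k-2-j}$, and reindexing the three resulting sums (main term, corrections, and base-case contribution) produces the stated formula with powers $A_h^{k-1-j}$ for $j=0,\dots,k-1$. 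The main obstacle is purely combinatorial bookkeeping: one must verify that each commutation yields exactly one additional factor of $h$, that this factor can be absorbed into the polynomial dependence on $h$ of the coefficients of $\mathbf{P}_{j,k,h}$, and that no hidden $h^{-1}$ is created when rewriting $H_i$ as $(hH_i)/h$. All three points are controlled by the stability lemma, so the induction closes without further analytic input.
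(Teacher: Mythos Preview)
Your inductive approach via the Leibniz identity $[A_h^k,C]=A_h[A_h^{k-1},C]+[A_h,C]A_h^{k-1}$ is a clean alternative to the paper's route, which instead writes $A_h^k$ as a single Weyl operator (Lemma~\ref{l:A:power-k}), computes the full Moyal expansion of the commutator at once, and then converts back via Corollary~\ref{c:A:power-k}. Your method is more elementary in that it never needs the explicit symbol of $A_h^k$.

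There is, however, a genuine gap. Your ``starting observation'' that linearity of $H_2+iH_3$ in $(\xi,\eta,\zeta)$ forces every higher Moyal correction to vanish, so that
\[
[A_h,\Op_h^w(b)]=\tfrac{h}{i}\Op_h^w\bigl(\{H_2+iH_3,b\}\bigr)
\]
holds for \emph{any} smooth $b$, is false. The $k$-th Moyal term of the commutator requires $k$ fiber derivatives in total; linearity of $H_2+iH_3$ caps \emph{its} share at one, but the remaining $k-1$ can hit $b$. If $b$ has degree $\geq 2$ in the fiber (e.g.\ $b=H_1^2$), the $k=3$ term survives because $\partial_p^2 H_1^2=2(\partial_p H_1)\otimes(\partial_p H_1)$ is nonzero and $\partial_p\partial_q^2(H_2+iH_3)\neq 0$ in general (here $H_2$ genuinely depends on $q$). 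This is exactly the situation in your inductive step, where you commute $A_h$ past $\Op_h^w\bigl(\mathbf{P}_{j,k-1,h}(hH_1,hH_2,hH_3)\bigr)$ without controlling the degree of $\mathbf{P}_{j,k-1,h}$.

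The fix is short: your stability lemma actually shows that $\{H_2+iH_3,\,\cdot\,\}$ \emph{preserves} the degree in $(hH_1,hH_2,hH_3)$. Since the base-case polynomials have degree $0$ (for $W$) and degree $1$ (for $hQH_1$), all the $\tilde{\mathbf{P}}_{j,k,h}$ and $\mathbf{P}_{j,k,h}$ arising in your induction remain of degree $\leq 1$ in $(hH_i)$, hence of degree $\leq 1$ in $(\xi,\eta,\zeta)$. For such $b$, the product $(H_2+iH_3)\cdot b$ has fiber degree $\leq 2$, so odd Moyal terms with $k\geq 3$ genuinely vanish and your exact commutator formula \emph{is} valid in every instance you use it. Once this degree bookkeeping is made explicit, the induction closes.
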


\begin{proof} First, we observe that, thanks to Lemma~\ref{l:A:power-k}, one has
$$A_h^k=\Op_h^w\left((H_2+iH_3)^k+\sum_{j=1}^kP_{j,k,h}(hH_1,hH_2,hH_3)(H_2+iH_3)^{k-j}\right),$$
where $P_{j,k,h}$ are polynomials verifying the properties of the present Lemma. The second bracket formula is then a direct consequence of the composition rule for pseudodifferential operators (see Theorem~\ref{t:composition}) together with Corollary~\ref{c:A:power-k}. In fact, since $W$ depends only on $(x,y,z)$, the terms in the asymptotic expansion will only involves derivatives of the symbol of $A_h^k$ with respect to the variables $(\xi,\eta,\zeta)$.

We now turn to the first bracket which can be rewritten as
$$[A_h^k,\Op_h^w(hQH_1)]=\sum_{j=0}^k\left[\Op_h^w\left(P_{j,k,h}(hH_1,hH_2,hH_3)(H_2+iH_3)^{k-j}\right),\Op_h^w(QhH_1)\right],$$
with $P_{0,k,h}=1$.
Given $0\leq j\leq k$, one can apply the composition rule from Theorem~\ref{t:composition} to each term, i.e.
\begin{multline*}\left[\Op_h^w\left(P_{j,k,h}(hH_1,hH_2,hH_3)(H_2+iH_3)^{k-j}\right),\Op_h^w(QhH_1)\right]\\
=2\sum_{0\leq 2\ell\leq k}\frac{h^{2\ell+1}}{(2\ell+1)!}\Op_h^w\left(A(D)^{2\ell+1}\left((P_{j,k,h}(hH_1,hH_2,hH_3)(H_2+iH_3)^{k-j})(QhH_1)\right)\right).
\end{multline*}
Here, the sum over $\ell$ is bounded as we are only considering polynomials symbols in the variables $(\xi,\eta,\zeta)$ (with the total degree being bounded by $k+1$). Recalling the exact expression of $A(D)$ from Theorem~\ref{t:composition} and Corollary~\ref{c:A:power-k} (together with several applications of the composition formula), we find the expected result.
\end{proof}

\begin{rema}
Similar statements as those of Lemmas \ref{l:A:power-k}, \ref{l:commute-A-power-k} and Corollary \ref{c:A:power-k} hold for $A_h^*$ replacing $A_h$.
\end{rema}

\subsection{Inductive argument: proof of Proposition \ref{p:support_properties}}

In order to prove the localization properties \textit{(S.1)} and \textit{(S.2)} for the measure $\mu_\infty$, we start from the following semiclassical estimates. For every $k\geq 0$,
\begin{align}
\label{e:microlocal_estimate_1}
& \left\langle \Op_h^w(b(x,y,z,hH_1)\tilde{\chi}_R^B\tilde{\chi}_\varepsilon^C)^2A_h^k(\widehat{P}_{h,\lambda}-\lambda_h)u_h,A_h^ku_h\right\rangle = \ml{O}(h^\infty), \\[0.2cm]
\label{e:microlocal_estimate_2}
 &  \left\langle \Op_h^w(b(x,y,z,hH_1)\tilde{\chi}_R^B\tilde{\chi}_\varepsilon^C)^2(A^*_h)^k(\widehat{P}_{h,\lambda}-\lambda_h)u_h,(A^*_h)^ku_h\right\rangle = \ml{O}(h^\infty).
\end{align}
Using estimates \eqref{e:microlocal_estimate_1} and \eqref{e:microlocal_estimate_2} together with~\eqref{e:AAstar} and the symbolic calculus developed in Section \ref{s:preliminary_section}, we aim at deriving inductively suitable concentration properties for the distributions $\varrho^\pm_{k,\varepsilon,R,h}$ defined by
\begin{align*}
\varrho^+_{k,\varepsilon,R,h} & : b \mapsto \left\langle \Op_h^w(b(x,y,z,hH_1)\tilde{\chi}_R^B\tilde{\chi}_\varepsilon^C)A_h^ku_h,A_h^ku_h\right\rangle, \\[0.2cm]
\varrho_{k,\varepsilon,R,h}^-& : b \mapsto \left\langle \Op_h^w(b(x,y,z,hH_1)\tilde{\chi}_R^B\tilde{\chi}_\varepsilon^C)(A^*_h)^ku_h,(A^*_h)^k u_h\right\rangle.
\end{align*}
\begin{rema} Note that, in order to make sense of the limit measures for $k\geq 1$, one needs to have an a priori upper bound on 
$$\left\| \Op_h^w(b(x,y,z,hH_1)\tilde{\chi}_R^B\tilde{\chi}_\varepsilon^C)A_h^ku_h\right\|$$
which will be part of the argument below. For $k=1$, such an upper bound follows for instance from the a priori estimate~\eqref{e:apriori-semiclassical1} but, for $k\geq 2$, this does not longer work immediately.
\end{rema}

We will show along the process that, up to additional extractions, the weak limits of these distributions are well defined as non-negative Radon measures:
 $$
 \langle \varrho_{k}^\pm , b \rangle := \lim_{\varepsilon\to 0^+}\lim_{R \to + \infty} \lim_{h \to 0} \langle \varrho^\pm_{k,\varepsilon,h} , b \rangle,
 $$
and we will deduce from these measures the desired support properties of $\mu_\infty$. For the sake of exposition, we start with the first step $k = 0$ which is slightly easier to handle:

\begin{lemm}
\label{l:0_step_support} The measure $\mu_\infty= \varrho_{0}^{\pm}$ satisfies:
$$\operatorname{supp}(\mu_\infty)\subset\left\{ (q,E) \in \mathcal{M} \times \IR \, : \, -\frac{\lambda_0-W}{1-Q}\leq  E  \leq \frac{\lambda_0-W}{1+Q}\right\}.$$
In particular, the support of the measure $\mu_\infty$ is compact in the $E$ variable  and disjoint with the classical forbidden region $\{W>\lambda_0\}$. Moreover, 
\begin{equation}
\label{e:support_1}
 \operatorname{supp}(\mu_\infty) \subset \mathcal{H}_{+}^{-1}(1) \cup \mathcal{H}_{-}^{-1}(1) \cup \big( \operatorname{supp} \varrho_{1}^+ \cap \operatorname{supp} \varrho_{1}^- \big),
\end{equation}
and 
$$\operatorname{supp} \varrho_{1}^\pm\subset\operatorname{supp}(\mu_\infty),$$ 
where we recall that $\mathcal{H}_{\pm}(q,E) = \pm E^{-1}(\lambda_0 - W - E Q)$.
\end{lemm}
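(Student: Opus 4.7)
The identification $\varrho_0^+ = \varrho_0^- = \mu_\infty$ is immediate from the definitions since $A_h^0 = (A_h^*)^0 = \mathrm{Id}$. The strategy is to extract, from the $k=0$ instances of \eqref{e:microlocal_estimate_1}--\eqref{e:microlocal_estimate_2} combined with the algebraic decomposition \eqref{e:AAstar}, two measure identities expressing $\varrho_1^\pm$ as explicit continuous multiples of $\mu_\infty$; the whole lemma then follows from elementary support arithmetic, using crucially the assumption $\|Q\|_{\ml{C}^0}<1$.

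First, I would check that $\varrho_{1,\varepsilon,R,h}^\pm$ are bounded in $\ml{D}'(\ml{U}_0\times\IR)$ by combining the Calder\'on--Vaillancourt theorem with \eqref{e:apriori-semiclassical1}, which by linearity yields $\|A_h u_h\|_{L^2(\ml{K})}, \|A_h^* u_h\|_{L^2(\ml{K})} = \ml{O}(1)$ on every compact $\ml{K}\subset\ml{U}_0$. Passing to subsequences produces non-negative finite Radon measures $\varrho_1^\pm$ on $\ml{U}_0\times\IR$. Next, I would rewrite \eqref{e:AAstar} in the form
$$\widehat{P}_{h,\lambda}-\lambda_h = A_h^* A_h + \Op_h^w\!\big(hH_1(1+Q)+W-\lambda_h\big) + \ml{O}_{L^2\to L^2}(h^2),$$
plug this into \eqref{e:microlocal_estimate_1} at $k=0$, and commute $A_h^* A_h$ through $B_h := \Op_h^w(b(x,y,z,hH_1)\tilde{\chi}_R^B\tilde{\chi}_\varepsilon^C)^2$ using the composition formula from Theorem~\ref{t:composition}. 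The commutator $[A_h,B_h]$ is of order $h$ in $L^2$-operator norm at fixed $R,\varepsilon$ (by the symbolic calculus of Lemma~\ref{l:cutoffH1}), and $B_h^*=B_h+\ml{O}(h)$ for real $b$, so one obtains, writing $E=hH_1$ for the extra variable,
$$\|\Op_h^w(b\tilde{\chi}_R^B\tilde{\chi}_\varepsilon^C) A_h u_h\|_{L^2}^2 + \big\langle \Op_h^w\!\big(b^2(E(1+Q)+W-\lambda_h)(\tilde{\chi}_R^B\tilde{\chi}_\varepsilon^C)^2\big) u_h, u_h\big\rangle = \ml{O}(h).$$
Letting successively $h\to 0^+$, $R\to+\infty$, $\varepsilon\to 0^+$ and polarizing in $b$, this yields the measure identity
$$\varrho_1^+ = \big(\lambda_0-W-E(1+Q)\big)\,\mu_\infty\quad \text{on }\ml{U}_0\times\IR,$$
and the parallel computation starting from the second form of \eqref{e:AAstar} and from \eqref{e:microlocal_estimate_2} gives
$$\varrho_1^- = \big(\lambda_0-W+E(1-Q)\big)\,\mu_\infty.$$

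The rest is then purely formal. Non-negativity of $\varrho_1^\pm$ and $\mu_\infty$, together with $1\pm Q>0$, forces the two-sided bound $-(\lambda_0-W)/(1-Q) \leq E \leq (\lambda_0-W)/(1+Q)$ on $\operatorname{supp}\mu_\infty$, hence compactness in $E$ and disjointness from $\{W>\lambda_0\}$. By \eqref{e:preserved-quantity}, the zero sets of the coefficients $\lambda_0-W\mp E(1\pm Q)$ are precisely $\mathcal{H}_\pm^{-1}(1)$, so the identities immediately give $\operatorname{supp}\varrho_1^\pm\subset\operatorname{supp}\mu_\infty$; conversely, at any point of $\operatorname{supp}\mu_\infty$ outside $\mathcal{H}_+^{-1}(1)\cup\mathcal{H}_-^{-1}(1)$ both coefficients are strictly positive, so local non-vanishing of $\mu_\infty$ transfers to each $\varrho_1^\pm$, yielding \eqref{e:support_1}.

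The main technical difficulty is making the commutator estimates and the triple limit fully rigorous: the $S^0_{\operatorname{cl}}$-seminorms of $\tilde{\chi}_R^B\tilde{\chi}_\varepsilon^C$ degenerate as $R\to+\infty$ or $\varepsilon\to 0^+$, so one must take $h\to 0^+$ first at fixed $(R,\varepsilon)$ and only afterwards send $R,\varepsilon$ to their limits. The compact support of $b$ in the $E$-variable, which confines the analysis to the subelliptic strip $|H_1|\lesssim h^{-1}$, is exactly what keeps every pseudodifferential composition appearing in the argument well-controlled.
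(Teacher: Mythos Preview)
Your proof is correct and follows essentially the same route as the paper: derive the measure identities $\varrho_1^\pm = (\lambda_0-W\mp E(1\pm Q))\,\mu_\infty$ from the ladder decomposition \eqref{e:AAstar} and the eigenvalue equation, then read off all the support statements. The only cosmetic difference is that you start from the squared-operator identity \eqref{e:microlocal_estimate_1} at $k=0$ and recover the linear identity by polarization, whereas the paper works directly with a single copy $\Op_h^w(b\,\tilde{\chi}_R^B\tilde{\chi}_\varepsilon^C)$ (its equation \eqref{e:ground_estimate}), which yields $\mu^\varepsilon\big(b\,(E(1+Q)+W-\lambda_0)\big)=-\varrho^+_{1,\varepsilon}(b)$ without any polarization step; this is slightly cleaner but not materially different.
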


\begin{rema}
Recall also from Appendix~\ref{a:spectral} that condition $\|Q\|_{\ml{C}^0}<1$ was initially imposed to ensure the hypoellipticity (and the semiboundedness) of the operator $\widehat{P}_h$.
\end{rema}

\begin{proof} Given $b\in\ml{C}^{\infty}_c(\ml{U}_0\times\IR)$, one has
\begin{equation}
\label{e:ground_estimate}
\left\langle \Op_h^w(b(x,y,z,hH_1)\tilde{\chi}_R^B\tilde{\chi}_\varepsilon^C)(\widehat{P}_{h,\lambda}-\lambda_h)u_h,u_h\right\rangle=\ml{O}(h^\infty).
\end{equation}
Recalling that
\begin{equation}
\label{e:first_A_decomposition}
\Op_h^w(H^2_2+H_3^2)=A_h^*A_h+h\Op_h^w(H_1) + h^2c_0,
\end{equation}
one can use the composition rule for pseudodifferential operators together with the a priori estimate~\eqref{e:apriori-semiclassical1}. This yields
\begin{align*}
\big \langle \Op_h^w(b(x,y,z,hH_1)(hH_1+hQH_1+W-\lambda_h)\tilde{\chi}_R^B\tilde{\chi}_\varepsilon^C)u_h,u_h\big \rangle & = - \langle \varrho^+_{1,\varepsilon,R,h} , b \rangle + \ml{O}(h). 
 \end{align*}
Thanks to~\eqref{e:apriori-semiclassical1} and to the Calder\'on-Vaillancourt Theorem, the right-hand side defines a bounded sequence in $\ml{D}'(\ml{U}_0\times\IR)$. Moreover, the Garding inequality~\eqref{e:Garding0} ensures that the limit distribution is a nonnegative Radon measure. Hence, letting $h\rightarrow 0^+$ and $R\rightarrow+\infty$ (in this order), one finds that, for every $b$ compactly supported in $\ml{U}_0\times\IR$,
$$\mu^\varepsilon\left(b(x,y,z,E)(E(1+Q)+W-\lambda_0)\right)= - \varrho^+_{1,\varepsilon}(b).$$
From this, one infers that, on the support of $\mu^\varepsilon$, $(1+Q)E + W - \lambda_0 \leq 0$, and moreover that 
\begin{equation}
\label{e:support_intersection_1}
 \operatorname{supp}(\mu^\varepsilon) \setminus \mathcal{H}_+^{-1}(1) = \operatorname{supp} \varrho^+_{1,\varepsilon} .
\end{equation}
Similarly, using now the identity
\begin{equation}
\label{e:second_A_decomposition}
\Op_h^w(H^2_2+H_3^2)=A_h A_h^* - h\Op_h^w(H_1) + h^2 c_0
\end{equation}
instead of \eqref{e:first_A_decomposition}, and using again \eqref{e:ground_estimate}, one finds
\begin{align*}
\big \langle \Op_h^w(b(x,y,z,hH_1)(-hH_1+hQH_1+W-\lambda_h)\tilde{\chi}_R^B\tilde{\chi}_\varepsilon^C)u_h,u_h\big \rangle & = - \langle \varrho_{1,\varepsilon,h}^- , b \rangle + \ml{O}(h),
 \end{align*}
and thus
$$
\mu^\varepsilon\left(b(x,y,z,E)(-E(1-Q)+W-\lambda_0)\right)= - \varrho^-_{1,\varepsilon}(b).$$
This implies that $-E(1-Q) + W - \lambda_0 \leq 0$ on the support of $\mu^\varepsilon$ and moreover that 
\begin{equation}
\label{e:support_intersection_2}
  \operatorname{supp}(\mu^\varepsilon) \setminus \mathcal{H}_-^{-1}(1) = \operatorname{supp} \varrho^-_{1,\varepsilon} .
\end{equation}
Putting together \eqref{e:support_intersection_1} and \eqref{e:support_intersection_2} and, letting $\varepsilon\to 0^+$ this concludes the proof.
\end{proof}

We now turn to the general case for which we cannot make use of the a priori estimate~\eqref{e:apriori-semiclassical1} directly:

\begin{lemm}
\label{l:k_step_support} For every $ k \geq 0$ and for every $R>1$ and $\varepsilon>0$, the family $(\varrho_{k,R,\varepsilon,h}^\pm)_{0<h\leq h_0}$ is bounded in $\ml{D}'(\ml{U}_0\times\IR)$ and any accumulation point (as $h\to 0^+$) $\varrho_{k,R,\varepsilon}^\pm$ is a finite nonnegative Radon measure. Moreover, $(\varrho_{k,R,\varepsilon}^\pm)_{R,\varepsilon}$ is bounded and any accumulation point as $R\rightarrow+\infty$ and $\varepsilon\rightarrow 0^+$ (in this order) is a finite nonnegative Radon measure verifying
$$
\operatorname{supp} \varrho_{k}^\pm \subset \{ (q,E) \in \mathcal{M} \times \IR \, : \, \pm E (2k +1 \pm Q) + W - \lambda_0 \leq 0 \},
$$
Moreover, for every $k\geq 0$, $\operatorname{supp} \varrho_{k+1}^{\pm}\subset\operatorname{supp} \varrho_{k}^{\pm}$ and
$$
 \operatorname{supp} \varrho_{k}^{\pm}\setminus \operatorname{supp} \varrho_{k+1}^\pm \subset   \mathcal{H}_\pm^{-1}(2k+1) .
$$
\end{lemm}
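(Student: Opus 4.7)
My plan is to proceed by induction on $k$, with the base case $k = 0$ furnished by Lemma~\ref{l:0_step_support}. Granted the lemma for $k$, I aim to establish the recurrence
\begin{equation*}
\varrho^\pm_{k+1}(b) \; = \; \varrho^\pm_k\big(b \cdot (\lambda_0 - W \mp (2k+1 \pm Q) E)\big),
\end{equation*}
from which all the assertions of Lemma~\ref{l:k_step_support} at level $k+1$ will follow. The key algebraic input is the ladder identity~\eqref{e:AAstar} combined with the eigenvalue equation~\eqref{e:eigenvalue-conjugation}, which gives
\begin{equation*}
A_h^* A_h \, u_h \; = \; \big(\lambda_h - \Op_h^w(W) - h(1+Q)\Op_h^w(H_1)\big) u_h + \ml{O}_{L^2}(h^2),
\end{equation*}
together with the dual identity for $A_h A_h^* u_h$ involving the opposite sign in front of $\Op_h^w(H_1)$. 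To transfer these to $A_h^k u_h$ and $(A_h^*)^k u_h$, I will use the elementary commutator $[A_h^k, A_h^*] = 2h \sum_{j=0}^{k-1} A_h^j \Op_h^w(H_1) A_h^{k-1-j}$ deduced from~\eqref{e:commutator_A_A*}, reorder all $\Op_h^w(H_1)$ factors to the left at a cost of $\ml{O}(h)$ remainders controlled via Lemma~\ref{l:commute-A-power-k}, and absorb every commutator with $\Op_h^w(W)$ and $\Op_h^w(hQH_1)$ into lower-order terms. This produces
\begin{equation*}
A_h^* A_h \cdot A_h^k u_h \; = \; \big(\lambda_h - \Op_h^w(W) - h(2k+1+Q) \Op_h^w(H_1)\big) A_h^k u_h + h \sum_{j=0}^{k-1} \Op_h^w(r_{j,k,h}) A_h^j u_h,
\end{equation*}
with symbols $r_{j,k,h}$ of the type appearing in Lemma~\ref{l:commute-A-power-k}, together with an analogous identity for $A_h A_h^* (A_h^*)^k u_h$ where the shift becomes $+h(2k+1-Q)\Op_h^w(H_1)$.

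To convert this into the recurrence, I will test the identity above against $A_h^k u_h$ under the microlocal cut-off $\Op_h^w(b \tilde{\chi}_R^B \tilde{\chi}_\varepsilon^C)$. Moving one of the $A_h$ factors to the right via adjunction produces $\varrho^+_{k+1,\varepsilon,R,h}(b)$ on the left-hand side, up to the commutator $[A_h^*, \Op_h^w(b \tilde{\chi}_R^B \tilde{\chi}_\varepsilon^C)] = \ml{O}_{L^2 \to L^2}(h)$. Pairing this commutator against $A_h^k u_h$ via Cauchy--Schwarz and applying a Young absorption argument together with~\eqref{e:microlocal_estimate_1} allows me to bound $\varrho^+_{k+1,\varepsilon,R,h}(1)$ in terms of the inductive bound on $\varrho^+_{k,\varepsilon,R,h}(1)$, establishing the requisite a priori control on $A_h^{k+1} u_h$. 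On the right-hand side, each remainder $h \langle \Op_h^w(b\, r_{j,k,h}) A_h^j u_h, A_h^k u_h\rangle$ is of order $h\, \varrho^+_{j,\varepsilon,R,h}(1)^{1/2} \varrho^+_{k,\varepsilon,R,h}(1)^{1/2} = \ml{O}(h)$ by induction. Passing successively to the limits $h \to 0^+$, $R \to +\infty$, $\varepsilon \to 0^+$, extracting subsequences so that weak limits exist, then yields the recurrence, while the nonnegativity of $\varrho^\pm_{k+1}$ follows from the sharp Garding inequality applied to $b \geq 0$, exactly as in the base case. The case of $\varrho^-_{k+1}$ is treated symmetrically using~\eqref{e:microlocal_estimate_2} and the $A_h A_h^*$ identity.

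The three support properties are immediate consequences of the recurrence. Setting $g_k^\pm := \lambda_0 - W \mp (2k+1 \pm Q) E$, the nonnegativity of $\varrho^\pm_{k+1}$ combined with the recurrence forces $g_k^\pm \geq 0$ on $\operatorname{supp} \varrho^\pm_k$, which is exactly the stated inclusion. For $b \geq 0$ compactly supported in the open complement of $\operatorname{supp} \varrho^\pm_k$ one has $\varrho^\pm_k(b \cdot g_k^\pm) = 0$, and thus $\operatorname{supp} \varrho^\pm_{k+1} \subset \operatorname{supp} \varrho^\pm_k$. Finally, for any point $(q_0, E_0) \in \operatorname{supp} \varrho^\pm_k \setminus \operatorname{supp} \varrho^\pm_{k+1}$, choosing a nonnegative $b$ supported in a neighborhood of $(q_0, E_0)$ disjoint from $\operatorname{supp} \varrho^\pm_{k+1}$ produces $\int b\, g_k^\pm \, d\varrho^\pm_k = 0$ with nonnegative integrand, so $g_k^\pm$ vanishes $\varrho^\pm_k$-a.e.\ on the support of $b$, giving $(q_0, E_0) \in \mathcal{H}_\pm^{-1}(2k+1)$. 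I expect the main obstacle to be the simultaneous bookkeeping of the remainder terms $h \Op_h^w(r_{j,k,h}) A_h^j u_h$ for $j < k$ and of the commutators $[A_h^*, \Op_h^w(b \tilde{\chi}_R^B \tilde{\chi}_\varepsilon^C)]$: each individual term is of order $h$, but controlling them uniformly in $h$ requires the inductive boundedness of $\varrho^\pm_{j,\varepsilon,R,h}(1)$ for all $j \leq k$ to be folded into the induction hypothesis, and the symbols $r_{j,k,h}$ must lie in the admissible class $S^0_{\operatorname{cl}}$ inside $C_\varepsilon(\ml{K})$, which is ensured by Lemma~\ref{l:polynomial} together with the cone localization introduced in Section~\ref{s:cutoff}.
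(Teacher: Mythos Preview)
Your approach is the same as the paper's: induction on $k$, ladder identity~\eqref{e:AAstar} combined with the eigenvalue equation, commutator bookkeeping via Lemma~\ref{l:commute-A-power-k} and~\eqref{e:commutator_A_A*}, then the recurrence $\varrho^\pm_{k+1}(b) = \varrho^\pm_k\big(b\,(\lambda_0 - W \mp (2k+1\pm Q)E)\big)$ from which the support statements follow exactly as you describe.

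There is one technical point you correctly flag as an obstacle but do not quite resolve. When you write that the remainder $h\langle \Op_h^w(b\,r_{j,k,h})A_h^j u_h, A_h^k u_h\rangle$ is of order $h\,\varrho^+_{j}(1)^{1/2}\varrho^+_{k}(1)^{1/2}$, and likewise when you bound the commutator $[A_h^*,\Op_h^w(b\tilde\chi_R^B\tilde\chi_\varepsilon^C)]$ paired against $A_h^k u_h$ and $A_h^{k+1}u_h$, you are implicitly using Cauchy--Schwarz against the \emph{unlocalized} norms $\|A_h^j u_h\|$, which are not known to be bounded uniformly in $h$. The inductive hypothesis only controls $\|\Op_h^w(b'\tilde\chi')A_h^j u_h\|$ for symbols $b'\tilde\chi'$ with the right support, and the commutators produce symbols living on slightly \emph{enlarged} supports. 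The paper handles this by strengthening the induction hypothesis to the a priori estimate~\eqref{e:norm_estimate_Ak} with nested cutoffs $\tilde\chi_{2^{j-(k+1)}R}^B\,\tilde\chi_{2^{k+1-j}\varepsilon}^C$: at each step the localization for lower $j$ is taken on a strictly larger cone, so that every remainder symbol produced by commutation is dominated by a cutoff already covered by the hypothesis, and one closes via elliptic estimates. Your Young-absorption idea for the a priori bound on $A_h^{k+1}u_h$ works once this nesting is in place (indeed the paper uses $\Op_h^w(b\tilde\chi_R^B\tilde\chi_\varepsilon^C)^2$ rather than a single factor precisely so that one copy localizes each side of the inner product), but without it the argument does not close.
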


\begin{proof}
The case $k = 0$ follows by Lemma \ref{l:0_step_support}. Assume that the claim holds for every $0\leq j\leq k$ and moreover that the following a priori estimates hold, for any $b\in\ml{C}^{\infty}_c(\ml{U}_0\times\IR)$ and for any $h>0$ small enough,
\begin{equation}
\label{e:norm_estimate_Ak}
\left\| \Op_h^w(b(x,y,z,hH_1)\tilde{\chi}_{2^{j-(k+1)}R}^B\tilde{\chi}_{2^{k+1-j}\varepsilon}^C)A^j_h u_h \right\|_{L^2} \leq C\left(\|b\|_\infty+\ml{O}_{R,\varepsilon,b}(h^{\frac{1}{2}})\right),
\end{equation}
for $0 \leq j \leq k$ and for some constant $C>0$ that is independent of $\varepsilon$ and $R$ and that depends on the support of $b$ (and also on $W$ and $Q$). Moreover, the constant in the remainder depends on a finite number of derivatives of $b$. Let us prove the claim for $k+1$. 
\begin{rema} We change the parameters in our cutoff functions at each stage but it does not hurt the argument as bounds like~\eqref{e:norm_estimate_Ak} holds for any choice of $R,\varepsilon$ and do not depend on the choice of subsequence. In the end, recall that we are aiming in the end at taking the limit $h\rightarrow 0^+$, $R\rightarrow +\infty$ and $\varepsilon\rightarrow 0^+$ (in this order). For instance, all along the analysis, one could take the sequences $(R_n)_{n\geq 1}$ and $(\varepsilon_m)_{m\geq 1}$ to be of the form $R_n=2^n$ and $\varepsilon_m=2^{-m}$ in view of matching the inductive process chosen here. We keep the notations $R\rightarrow+\infty$ and $\varepsilon\rightarrow 0^+$ to alleviate notations. 
\end{rema}

We begin by proving the same a priori estimate on
$$\Vert \Op_h^w(b(x,y,z,hH_1)\tilde{\chi}_R^B\tilde{\chi}_\varepsilon^C)A^{k+1}_h u_h \Vert_{L^2},$$
which is the main technical point of the analysis. To do that, we begin with equality~\eqref{e:microlocal_estimate_1} which can be expanded as follows
\begin{align*}
 \left\langle \Op_h^w(b(x,y,z,hH_1)\tilde{\chi}_R^B\tilde{\chi}_\varepsilon^C)^2A_h^k\Op_h^w(hH_1+hQH_1+W+h^2(W_{1,\lambda}+c_0)-\lambda_h)u_h,A_h^ku_h\right\rangle & \\[0.2cm] & \hspace*{-11cm} =-\left\langle \Op_h^w(b(x,y,z,hH_1)\tilde{\chi}_R^B\tilde{\chi}_\varepsilon^C)^2A_h^kA_h^*A_h u_h,A_h^ku_h\right\rangle 
 + \ml{O}(h^\infty).
\end{align*}
Applying Lemma~\ref{l:commute-A-power-k} together with~\eqref{e:commutator_A_A*} $k$ times, we can deduce from the support properties of our symbol and from~\eqref{e:norm_estimate_Ak} that
\begin{multline*}
 \left\langle \Op_h^w(b(x,y,z,hH_1)\tilde{\chi}_R^B\tilde{\chi}_\varepsilon^C)^2\Op_h^w((2k+1)hH_1+hQH_1+W-\lambda_h) A_h^ku_h,A_h^ku_h\right\rangle \\[0.2cm] =-\left\langle \Op_h^w(b(x,y,z,hH_1)\tilde{\chi}_R^B\tilde{\chi}_\varepsilon^C)^2A_h^*A_h^{k+1} u_h,A_h^ku_h\right\rangle 
 + \ml{O}_{R,\varepsilon}(h).
\end{multline*}
Here, the fact that we can control the remainder terms at each step by~\eqref{e:norm_estimate_Ak} follows from elliptic estimates for pseudodifferential operators as in~\cite[Th.~E.33]{DyatlovZworski19} and from the fact that we schrink the support of our cutoffs functions at each stage of the induction.

Applying the composition rule for pseudodifferential operators together with the induction hypothesis, we find that
\begin{multline*}
 \left\langle \Op_h^w((b(x,y,z,hH_1)\tilde{\chi}_R^B\tilde{\chi}_\varepsilon^C)^2((2k+1)hH_1+hQH_1+W-\lambda_h) A_h^ku_h,A_h^ku_h\right\rangle \\[0.2cm] =-\left\|\Op_h^w(b(x,y,z,hH_1)\tilde{\chi}_R^B\tilde{\chi}_\varepsilon^C)A_h^{k+1} u_h\right\|^2 
 + \ml{O}_{R,\varepsilon}(h).
\end{multline*}
By induction, the left-hand side is bounded from which we deduce the expected upper bound at step $k+1$.

We note that the upper bound~\eqref{e:norm_estimate_Ak} allows to verify by induction that any accumulation point $\varrho_{k,\varepsilon,R}^\pm$ (as $h\rightarrow 0^+$) is a finite nonnegative Radon measure whose total mass is independent of $\varepsilon$ and $R$. We can thus take the limit $R\rightarrow +\infty$ and $\varepsilon\rightarrow 0^+$ in this order. We obtain the expected limit measure $\varrho_{k+1}^\pm$ and we can now derive its support properties. Repeating the same argument with $\Op_h^w(b(x,y,z,hH_1)\tilde{\chi}_R^B\tilde{\chi}_\varepsilon^C)$ instead of $\Op_h^w(b(x,y,z,hH_1)\tilde{\chi}_R^B\tilde{\chi}_\varepsilon^C)^2$ (and with $(A_h^*)^k$ instead of $A_h^k$) and taking the limits $h\rightarrow 0^+$ and $R\rightarrow +\infty$ (in this order), we can prove that
\begin{equation}
\label{e:link_semiclassical_measures}
\varrho_{k}^{\pm}\left(b(x,y,z,E)(\pm E(2k+1 \pm Q)+W-\lambda_0)\right)= - \varrho^\pm_{k+1}(b).
\end{equation}
Finally, from \eqref{e:link_semiclassical_measures}, we get the first statement of the lemma and moreover:
\begin{align*}
\operatorname{supp} \varrho_{k}^{\pm} \setminus \mathcal{H}_\pm^{-1}(2k+1) = \operatorname{supp} \varrho_{k+1}^{\pm}.
\end{align*} 
This concludes the proof.
\end{proof}

Finally, Lemma \ref{l:k_step_support} implies that:
$$
\operatorname{supp}\mu_\infty\subset ( \mathcal{M}_{\lambda_0,W} \times \{ 0 \}) \cup  \bigcup_{k=0}^\infty \left(  \mathcal{H}_+^{-1}(2k+1) \cup \mathcal{H}_-^{-1}(2k+1) \right).
$$
Defining, for $k \in \mathbb{Z}_+$, 
$$
\mu_{k,\infty}^\pm := \mathbf{1}_{\mathcal{H}_\pm^{-1}(2k+1)} \mu_\infty,\quad \text{and}\quad \overline{\mu}_\infty := \mathbf{1}_{\mathcal{M}_{\lambda_0,W} \times \{0 \}} \mu_\infty,
$$ 
we obtain the proof of Proposition \ref{p:support_properties}.

\section{Normal form reduction}\label{s:normalform}

In order to study the invariance properties of the measure $\mu_\infty$, we will require a normal form procedure in the subelliptic regime $1 \ll |H_1| \lesssim h^{-1}$. This will allow us to work with functions that are adapted to the geometry of the problem. Roughly speaking, it amounts to work in a system of asymptotically symplectic coordinates as $|H_1|\rightarrow \infty$. This normal form approach was pioneered in Melrose's works~\cite{Melrose85} and recently revisited in the context of sub-Riemannian Laplacians associated with $3$-dimensional contact flows~\cite{ColindeVerdiereHillairetTrelat15, ColindeVerdiereHillairetTrelat21} as we are dealing here. See also~\cite{ RaymondVuNgoc15} for earlier related normal forms procedure in the context of $2$-dimensional magnetic semiclassical Schr\"odinger operators and~\cite{HelfferKordyukovRaymondVuNgoc16, Morin19, Morin20} regarding higher dimensional normal forms for magnetic operators.

Compared with~\cite{ColindeVerdiereHillairetTrelat15}, we will simplify some aspects of the normal form procedure. Rather than making a symplectic change of variables at each step of the iterative scheme and using the machinery of Fourier integral operators, we will just encode a slightly simpler change of variables (close to but not necessarily symplectic) into a small deformation of the test function 
$$
\mathbf{b} = b + \ml{O}\left( \frac{ \vert H_2 \vert + \vert H_3 \vert}{\vert H_1 \vert} \right)
$$ 
for the Wigner distribution $\mu_h^{R,\varepsilon}$ defined by \eqref{e:wigner-H1}. This deformation  makes the remainder term in $\{H_2^2+H_3^2, \mathbf{b}\}$ as small as possible in the regime $H_2^2+H_3^2\ll H_1^2$. We will need as well to introduce an auxiliary function $\mathbf{H}_1$ that behaves asymptotically as $H_1$ and that ensures $\{H_2^2+H_3^2,\mathbf{H}_1\}$ to be small in that same regime. This simplified version of the normal form method is in fact sufficient to obtain the desired invariance properties of the semiclassical measure $\mu_\infty = \lim_{\varepsilon \to 0} \mu^\varepsilon$. We emphasize that Wigner type distributions enjoy somehow more flexibility regarding change of variables than Fourier integral operators (since many negligible terms disappear in the limit $h \to 0$), and we will crucially exploit this fact to avoid the use of cumbersome symplectic changes of coordinates that in the end match with ours in the semiclassical limit (see \cite[Sect. 3.1]{ArnaizSun22} for a related construction involving  two-microlocal semiclassical measures).

\begin{rema} In this section, the fact that we have simple bracket formulas as~\eqref{e:commutators-hamiltonian} will make the proof slightly simpler and more explicit regarding the terms appearing in the normal form. Yet, the strategy would remain the same if $\{H_1,H_2\}$ and $\{H_1,H_3\}$ were more general linear combinations of $H_1$, $H_2$ and $H_3$ (as in the general contact case). In any case, the fact that these brackets do not exactly vanish is exactly where the situation is more involved than in the flat Heisenberg case~\cite{FermanianFischer21, FermanianLetrouit20} like when considering varying magnetic fields rather than constant ones. 
\end{rema}

\subsection{Solving polynomial cohomological equations} 

We first recall the algebraic relations given by~\eqref{e:commutators-hamiltonian} and producing the subelliptic structure of our problem:
\begin{equation}\label{e:keyrelation}
 \{H_1,H_{2}\}=-KH_3,\quad \{H_1, H_3\}=H_{2},\quad\{H_2,H_3\}=-H_1.
\end{equation}
In some sense the pair $(H_2,H_3)$ behaves like a system of coordinates in $T^*\IR$ for the classical harmonic oscillator, at least in the region $H_2^2+H_3^2\ll H_1^2$. With that observation in mind, it is convenient to introduce complex notations:
$$
Z:=H_{2}+iH_3\quad\text{and}\quad \overline{Z}:=H_{2}-iH_3,
$$
so that
$$H_{2}^2+H_3^2=|Z|^2.$$
In particular, a direct calculation shows that, for every $k\neq l$ in $\IZ_+$, one has
\begin{equation}\label{e:solution-cohomological-function}
\left\{|Z|^2, \frac{Z^k \overline{Z}^l}{2i(l-k)}\right\}=H_1 Z^k \overline{Z}^l,
\end{equation}
and the relations~\eqref{e:keyrelation} now become
\begin{equation}\label{e:complex-keyrelation}
 \{H_1,Z\}=iK_+Z+iK_-\overline{Z},\quad \{H_1, \overline{Z}\}=-iK_-Z-iK_+\overline{Z},\quad\{Z,\overline{Z}\}=2iH_1,
\end{equation}
where 
$$K_+:=\frac{1+K}{2}\quad\text{and}\quad K_-=\frac{1-K}{2}.$$
In particular, one has
\begin{equation}\label{e:H1commuteZ2}
 \{|Z|^2,H_1\}=iK_-(Z^2-\overline{Z}^2).
\end{equation}

\begin{rema}
 In the rest of this Section, we will always suppose that we work in the region $H_1\neq 0$. In the end, the formulas will only be used in the regime $|H_2|,|H_3|\ll |H_1|$.
\end{rema}

\subsection{A small deformation of $H_1$}

We start with the deformation of the variable $H_1$.

\subsubsection{Normal form procedure}

In view of~\eqref{e:H1commuteZ2} and~\eqref{e:solution-cohomological-function}, we set
$$P_2:=\frac{K_-}{2}\left(\left(\frac{H_2}{H_1}\right)^2-\left(\frac{H_3}{H_1}\right)^2\right).$$
By construction, one has
$$\{|Z|^2,H_1(1 + P_2)\}=\frac{(Z^2+\overline{Z}^2)}{4}\left\{|Z|^2,\frac{K_-}{H_1}\right\}.$$
Iterating this procedure one more time, one can find $P_3$ in $\ml{P}(\IR^2\times\IS^1)$ of the form
$$P_3=\sum_{|\alpha|=3}P_{3,\alpha}(x,y,z)\left(\frac{H_2}{H_1}\right)^{\alpha_2}\left(\frac{H_3}{H_1}\right)^{\alpha_3}$$
and such that, if we set
\begin{equation}\label{e:normalform-H1}
 \mathbf{H}_1:=H_1\left(1+P_2 +P_3\right),
\end{equation}
then one has
\begin{equation}\label{e:H1commuteZ2-normalform}
 \{H_2^2+H_3^2,\mathbf{H}_1\}=H_2^2 R_1+H_3^3 R_2+H_2H_3 R_3,
\end{equation}
with $R_j$ belonging in $\mathcal{P}(\IR^2\times\IS^1)$ and being of the form
$$R_j=\sum_{|\alpha|\geq 2}R_{j,\alpha}(x,y,z)\left(\frac{H_2}{H_1}\right)^{\alpha_2}\left(\frac{H_3}{H_1}\right)^{\alpha_3}.$$
Hence, in the regime $H_2^2+H_3^2\ll H_1^2$, this Poisson bracket is somehow of smaller order than the one appearing in~\eqref{e:H1commuteZ2}.
\medskip

We finally state the following analogue of Lemma~\ref{l:symbol-cone}:
\begin{lemm}\label{l:symbol-cone-normalform} Let $\mathcal{K
}$ be a compact subset of $\IR^2\times\IS^1$. For every $0<\varepsilon\leq 1$, for every $N_0\geq 2$ and for every $(\alpha,\beta)\in\IZ_+^6$, one can find a constant $C_{\varepsilon, N_0, \mathcal{K}, \alpha,\beta}$ such that, for every $(q,p)=(x,y,z,\xi,\eta,\zeta)$ in $C_{2^{N_0}\varepsilon}(\mathcal{K})\setminus C_{2^{-N_0}\varepsilon}(\mathcal{K})$, one has, for every $j\in\{2,3\}$, 
$$\left|\partial^\alpha_{q}\partial_{p}^\beta \left(\frac{\mathbf{H}_1}{\sqrt{1+H_2^2+H_3^2}}\right)\right|\leq C_{\varepsilon, N_0, \mathcal{K},\alpha,\beta}\langle p\rangle^{-|\beta|},$$ 
where $\langle p\rangle:=(1+\xi^2+\eta^2+\zeta^2)^{\frac{1}{2}}.$ 
\end{lemm}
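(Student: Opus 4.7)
The plan is to factor
$$
\frac{\mathbf{H}_1}{\sqrt{1+H_2^2+H_3^2}} = \frac{H_1}{\sqrt{1+H_2^2+H_3^2}} \cdot (1 + P_2 + P_3)
$$
and then apply the Leibniz rule together with the symbol bounds already established for each of the two factors. By the previous Lemma~\ref{l:symbol-cone}, the first factor verifies the required estimates
$$
\left|\partial_q^\alpha \partial_p^\beta \left(\frac{H_1}{\sqrt{1+H_2^2+H_3^2}}\right)\right| \leq C_{\varepsilon,N_0,\mathcal{K},\alpha,\beta}\,\langle p\rangle^{-|\beta|}
$$
uniformly in the annular region $C_{2^{N_0}\varepsilon}(\mathcal{K})\setminus C_{2^{-N_0}\varepsilon}(\mathcal{K})$.

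For the second factor, recall that $P_2$ and $P_3$ have been built, by construction, as elements of $\mathcal{P}(\mathbb{R}^2\times\mathbb{S}^1)$. Thus Corollary~\ref{c:derivative-cone}, applied with $2^{N_0}\varepsilon$ in place of $\varepsilon$, yields
$$
\left|\partial_q^\alpha \partial_p^\beta (1+P_2+P_3)\right| \leq C'_{\varepsilon,N_0,\mathcal{K},\alpha,\beta}\,\langle p\rangle^{-|\beta|}
$$
throughout the full cone $C_{2^{N_0}\varepsilon}(\mathcal{K})$, which contains the annulus where the first factor is estimated. The conclusion then follows immediately from the Leibniz formula: writing $\partial_q^\alpha \partial_p^\beta$ of the product as a finite linear combination of terms of the form $\partial_q^{\alpha'}\partial_p^{\beta'}\!\left(H_1/\sqrt{1+H_2^2+H_3^2}\right)\cdot \partial_q^{\alpha-\alpha'}\partial_p^{\beta-\beta'}(1+P_2+P_3)$, each such term is controlled by $\langle p\rangle^{-|\beta'|}\langle p\rangle^{-|\beta-\beta'|}=\langle p\rangle^{-|\beta|}$, with a constant depending only on $\varepsilon$, $N_0$, $\mathcal{K}$, $\alpha$ and $\beta$.

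No substantive obstacle is anticipated: the statement is essentially a stability-under-products remark for the symbol class introduced in Section~\ref{s:expression}, once one observes that the specific form $\mathbf{H}_1 = H_1(1+P_2+P_3)$ with $P_2,P_3\in\mathcal{P}(\mathbb{R}^2\times\mathbb{S}^1)$ puts the correction factor precisely in the framework of Corollary~\ref{c:derivative-cone}. The only mild care required is to perform the estimate on the second factor on a slightly larger cone ($C_{2^{N_0}\varepsilon}$) than the annulus where the first factor is controlled, so that both bounds hold simultaneously on the annulus appearing in the statement.
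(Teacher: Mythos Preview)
Your proposal is correct and follows exactly the approach of the paper, which simply states that the lemma is a direct consequence of Corollary~\ref{c:derivative-cone} and Lemma~\ref{l:symbol-cone}. You have merely unpacked that one-line proof by writing out the factorization $\mathbf{H}_1/\sqrt{1+H_2^2+H_3^2} = (H_1/\sqrt{1+H_2^2+H_3^2})(1+P_2+P_3)$ and applying the Leibniz rule explicitly.
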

\begin{proof}
This is a direct consequence of Corollary~\ref{c:derivative-cone} and Lemma~\ref{l:symbol-cone}.
\end{proof}

\subsubsection{Rewriting $\mu_{h}^{R,\varepsilon}$ using $\mathbf{H}_1$}

We now come back to the distribution $\mu_{h}^{R,\varepsilon}$ that was introduced in~\eqref{e:wigner-H1} and defined using $H_1$. We next show that we can replace $H_1$ by $\mathbf{H}_1$ modulo small remainders in $h$ and $\varepsilon$.
\begin{lemm}\label{l:wigner} For $b$ in $\ml{C}^\infty_c(\ml{U}_0\times\IR)$, we set
\begin{equation}
\label{e:black_mu}
\langle \bm{\mu}_{h}^{R,\varepsilon},b\rangle:=\left\langle \Op_h(b(x,y,z,h\mathbf{H}_1)\tilde{\chi}_\varepsilon^C\tilde{\chi}_\varepsilon^{\mathbf{C}}\tilde{\chi}_R^B\tilde{\chi}_R^{\mathbf{B}})u_h,u_h\right\rangle_{L^2},
\end{equation}
where
 $$\tilde{\chi}_R^{\mathbf{B}}:=\tilde{\chi}\left(\frac{\mathbf{H}_1^2+H_2^2+H_3^2}{4R}\right),\quad\text{and}\quad\tilde{\chi}_\varepsilon^{\mathbf{C}}:=\tilde{\chi}\left(\frac{\varepsilon \mathbf{H}_1}{4\sqrt{H_2^2+H_3^2+1}}\right).$$
 Then, one has, as $h\rightarrow 0^+$, $R\rightarrow+\infty$ and $\varepsilon\rightarrow 0$ (in this order),
 $$\langle\bm{\mu}_{h}^{R,\varepsilon},b\rangle=\langle\mu_{h}^{R,\varepsilon},b\rangle+\ml{O}_{b,\varepsilon,R}(h)+\ml{O}_{\varepsilon}(R^{-1})+\ml{O}_b(\varepsilon^2).$$
\end{lemm}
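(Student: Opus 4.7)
The strategy is to compare the two Wigner distributions by writing the symbolic difference
\begin{align*}
& b(x,y,z,h\mathbf{H}_1)\tilde{\chi}_\varepsilon^C\tilde{\chi}_\varepsilon^{\mathbf{C}}\tilde{\chi}_R^B\tilde{\chi}_R^{\mathbf{B}} - b(x,y,z,hH_1)\tilde{\chi}_\varepsilon^C\tilde{\chi}_R^B \\
& \quad = \bigl[b(x,y,z,h\mathbf{H}_1) - b(x,y,z,hH_1)\bigr]\tilde{\chi}_\varepsilon^C\tilde{\chi}_\varepsilon^{\mathbf{C}}\tilde{\chi}_R^B\tilde{\chi}_R^{\mathbf{B}} \\
& \qquad + b(x,y,z,hH_1)\tilde{\chi}_\varepsilon^C\tilde{\chi}_R^B\bigl(\tilde{\chi}_\varepsilon^{\mathbf{C}}\tilde{\chi}_R^{\mathbf{B}} - 1\bigr),
\end{align*}
quantizing, evaluating against $u_h$, and bounding each resulting operator separately. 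The key input in both steps is that on the support of $\tilde{\chi}_\varepsilon^C$ one has $P_2, P_3 = \ml{O}(\varepsilon^2)$ (by Corollary~\ref{c:derivative-cone}), so that $\mathbf{H}_1 - H_1 = H_1(P_2+P_3) = \ml{O}(\varepsilon^2 H_1)$. Together with Lemma~\ref{l:symbol-cone-normalform}, this ensures all relevant symbols remain in the admissible class $S_{\operatorname{cl}}^0(T^*(\IR^2\times\IS^1))$ with the appropriate seminorm estimates.

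For the first (Taylor) piece, since $b\in\ml{C}^\infty_c(\ml{U}_0\times\IR)$, on the support of $b(\cdot, h\mathbf{H}_1)$ we have $|h\mathbf{H}_1|$ uniformly bounded, and then $|hH_1| = |h\mathbf{H}_1|/(1+\ml{O}(\varepsilon^2))$ is also uniformly bounded on the overlap with $\operatorname{supp}\tilde{\chi}_\varepsilon^C$. A first-order Taylor expansion then yields
$$
\bigl|b(x,y,z,h\mathbf{H}_1) - b(x,y,z,hH_1)\bigr| \leq \|\partial_E b\|_\infty \cdot h|H_1|(|P_2|+|P_3|) = \ml{O}_b(\varepsilon^2),
$$
with matching bounds on derivatives. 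The resulting symbol is $\ml{O}_b(\varepsilon^2)$ in $S^0_{\operatorname{cl}}$, and Calder\'on-Vaillancourt produces an $L^2$-operator bound of the same order, giving the $\ml{O}_b(\varepsilon^2)$ remainder.

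For the second (cutoff) piece, expand $\tilde{\chi}_\varepsilon^{\mathbf{C}}\tilde{\chi}_R^{\mathbf{B}} - 1 = -\chi_\varepsilon^{\mathbf{C}} - \chi_R^{\mathbf{B}} + \chi_\varepsilon^{\mathbf{C}}\chi_R^{\mathbf{B}}$, and treat each of the three resulting terms. On the support of any of these cutoffs intersected with $\tilde{\chi}_\varepsilon^C\tilde{\chi}_R^B$, one checks that (using $\mathbf{H}_1\approx H_1$) the region is confined to $R\lesssim H_1^2+H_2^2+H_3^2 \lesssim \varepsilon^{-2} R$, and in particular $H_1^2\geq R/2$ while $H_2^2+H_3^2+1\geq \varepsilon^2 H_1^2/C \geq \varepsilon^2 R/C'$. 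Using the factorization
$$
\Op_h^w(\sigma) = \Op_h^w\!\left(\frac{\sigma}{1+H_2^2+H_3^2}\right)\Op_h^w(1+H_2^2+H_3^2) + \ml{O}_{L^2\to L^2}(h)
$$
for each relevant $\sigma$, together with the a priori bound $\|\Op_h^w(1+H_2^2+H_3^2)u_h\|_{L^2}\leq C$ from \eqref{e:apriori-semiclassical1} and the Calder\'on-Vaillancourt estimate
$$
\Bigl\|\Op_h^w\!\left(\sigma/(1+H_2^2+H_3^2)\right)\Bigr\|_{L^2\to L^2}=\ml{O}_\varepsilon(R^{-1}),
$$
yields an $\ml{O}_\varepsilon(R^{-1})$ contribution, with an additional $\ml{O}_{b,\varepsilon,R}(h)$ error from the symbolic calculus remainder.

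The main technical obstacle is verifying that $\tilde{\chi}_\varepsilon^{\mathbf{C}}$ and $\tilde{\chi}_R^{\mathbf{B}}$ (built from the modified symbol $\mathbf{H}_1$ rather than from $H_1$) still belong to $S^0_{\operatorname{cl}}$ with seminorms that enter tamely into the final estimates; the factors $4$ appearing in their definitions are chosen precisely so that the $\ml{O}(\varepsilon^2)$-perturbation of the level $H_1\mapsto \mathbf{H}_1$ does not push one out of the relevant scaling region. Lemma~\ref{l:symbol-cone-normalform} provides exactly the symbol-class control needed, and from there the argument reduces to bookkeeping of the previous three contributions.
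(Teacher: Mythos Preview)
Your proposal is correct and follows essentially the same approach as the paper: the paper likewise inserts the bold cutoffs $\tilde{\chi}_\varepsilon^{\mathbf{C}}$ and $\tilde{\chi}_R^{\mathbf{B}}$ one at a time using the ``divide by $1+H_2^2+H_3^2$ and apply the a priori estimate~\eqref{e:apriori-semiclassical1}'' trick (giving $\ml{O}_{\varepsilon}(R^{-1})+\ml{O}_{R,\varepsilon}(h)$), and then replaces $hH_1$ by $h\mathbf{H}_1$ in the argument of $b$ via a first-order Taylor expansion combined with Corollary~\ref{c:derivative-cone} (giving $\ml{O}_b(\varepsilon^2)$). The only cosmetic difference is the order in which the two steps are performed and that you decompose the difference directly rather than building up $\mu_h^{R,\varepsilon}$ toward $\bm{\mu}_h^{R,\varepsilon}$.
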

\begin{rema}\label{r:support}
 Note that, on the support of $\tilde{\chi}_\varepsilon^C$, one has $\mathbf{H}_1=H_1(1+\ml{O}(\varepsilon^2))$ so that $\tilde{\chi}_\varepsilon^C=1$ on the support of $\tilde{\chi}_\varepsilon^C\tilde{\chi}_\varepsilon^{\mathbf{C}}$ if $\varepsilon>0$ is chosen small enough. We just keep the function $\tilde{\chi}_\varepsilon^C$ to ensure that $\mathbf{H}_1$ is well defined. The same holds for $\tilde{\chi}_R^B\tilde{\chi}_R^{\mathbf{B}}$.
\end{rema}

\begin{proof} Let $b(x,y,z,E)$ be an element in $\ml{C}^\infty_c(\ml{U}_0\times\IR^*)$. One has
$$\langle\mu_{h}^{R,\varepsilon},b\rangle=\left\langle \Op_h(b(x,y,z,hH_1)\tilde{\chi}_\varepsilon^C\tilde{\chi}_R^B)u_h,u_h\right\rangle_{L^2},$$
where $\tilde{\chi}_\varepsilon^C$ was defined in~\eqref{e:cutoff-cone} as
$$\tilde{\chi}_\varepsilon^C:=\tilde{\chi}\left(\frac{\varepsilon H_1}{\sqrt{H_2^2+H_3^2+1}}\right).$$
Arguing as in the proof of Lemma~\ref{l:symbol}, one knows that
$$\tilde{\chi}_\varepsilon^C\tilde{\chi}\left(\frac{\varepsilon \mathbf{H}_1}{4\sqrt{H_2^2+H_3^2+1}}\right)b(x,y,z,hH_1)$$
belongs to $S^0_{\text{cl}}(T^*(\IR^2\times\IS^1))$ with all seminorms uniformly bounded in terms of $0<h\leq 1$ (but not on $\varepsilon$ a priori).

Set now $\chi_\varepsilon^{\mathbf{C}}=1-\tilde{\chi}_\varepsilon^{\mathbf{C}}$. On the support of $b(x,y,z,hH_1)\tilde{\chi}_\varepsilon^C\chi_\varepsilon^{\mathbf{C}}$, one has 
$$1\leq\frac{\varepsilon |H_1|}{\sqrt{1+H_2^2+H_3^2}}\leq 10,$$
for small enough $\varepsilon>0$. In particular, we can argue as in~\eqref{e:measure-pseudo} and write
$$b(x,y,z,hH_1)\tilde{\chi}_R^B\tilde{\chi}_\varepsilon^C\chi_\varepsilon^{\mathbf{C}}=\frac{b(x,y,z,hH_1)\tilde{\chi}_R^B\tilde{\chi}_\varepsilon^C\chi_\varepsilon^{\mathbf{C}}}{1+H_2^2+H_3^2}(1+H_2^2+H_3^2),$$
where the first term of the product on the right-hand side belongs to $S^{-2}_{\text{cl}}$ thanks to the above support properties. Using the Calder\'on-Vaillancourt Theorem together with~\eqref{e:eigenvalue-conjugation} and~\eqref{e:apriori-semiclassical1}, we find that
$$\langle\mu_{h}^{R,\varepsilon},b\rangle=\left\langle \Op_h(b(x,y,z,hH_1)\tilde{\chi}_\varepsilon^C\tilde{\chi}_\varepsilon^{\mathbf{C}}\tilde{\chi}_R^B)u_h,u_h\right\rangle_{L^2}+\ml{O}_{R,\varepsilon}(h)+\ml{O}_{\varepsilon}(R^{-1}).$$
Similarly, we can insert the cutoff function $\tilde{\chi}_R^{\mathbf{B}}$ if we note that, on the support of $\chi_R^{\mathbf{B}}\tilde{\chi}_R^B$, one has
$$\frac{1}{10}R\leq H_1^2+H_2^2+H_3^2\leq 10R,$$
hence the involved symbol is compactly supported. We can then apply the exact same argument and show that
$$\langle\mu_{h}^{R,\varepsilon},b\rangle=\left\langle \Op_h(b(x,y,z,hH_1)\tilde{\chi}_\varepsilon^C\tilde{\chi}_\varepsilon^{\mathbf{C}}\tilde{\chi}_R^{\mathbf{B}}\tilde{\chi}_R^B)u_h,u_h\right\rangle_{L^2}+\ml{O}_{R,\varepsilon}(h)+\ml{O}_{\varepsilon}(R^{-1}).$$

We are now left with replacing $H_1$ by $\mathbf{H}_1$ in the last component of $b$. We write 
$$b(x,y,z,hH_1)=b(x,y,z,h\mathbf{H}_1)+h(H_1-\mathbf{H}_1)\int_0^1\partial_Eb(x,y,z,h\mathbf{H}_1+th(H_1-\mathbf{H}_1))dt.$$
Hence, applying Corollary~\ref{c:derivative-cone} together with~\eqref{e:apriori-semiclassical1} and~\eqref{e:normalform-H1}, the composition rule for pseudodifferential operators and the Calder\'on-Vaillancourt Theorem, one finds the expected conclusion.
\end{proof}

\subsection{A small deformation of $a$} We now proceed similarly and introduce a small deformation of $a$ whose Poisson bracket with $H_2^2+H_3^2$ is small in the regime $H_2^2+H_3^2\ll H_1^2$. Let $a$ be a smooth function compactly supported on $\ml{U}_0$, we write
$$
\{|Z|^2,a\}=Z\{\overline{Z},a\}+\overline{Z}\{Z,a\}.
$$
In the region $H_1\neq 0$ and in view of~\eqref{e:solution-cohomological-function}, we can set
$$a_1:=\frac{Z}{2iH_1}\{\overline{Z},a\}-\frac{\overline{Z}}{2iH_1}\{Z,a\}.$$
We then find
$$
\{|Z|^2,a + a_1\}=\frac{Z}{2i}\left\{|Z|^2,\frac{\{\overline{Z},a\}}{H_1}\right\}-\frac{\overline{Z}}{2i}\left\{|Z|^2,\frac{\{Z,a\}}{H_1}\right\}.
$$
Recalling that $a$ is a function on $\ml{U}_0$, one has 
\begin{eqnarray*}
\{Z,\{\overline{Z},a\}\}-\{Z,\{\overline{Z},a\}\}&=&(X_\perp+iV)(X_\perp-iV)(a)-(X_\perp-iV)(X_\perp+iV)(a)\\[0.2cm]
&=&2i[V,X_\perp](a)=2iX(a).
 \end{eqnarray*}
Hence, letting $X_{Z}$ (resp. $X_{\overline{Z}}$) be the (complex) vector field generated by $Z$ (resp. $\overline{Z}$) the above expression can be simplified as
\begin{align*}
 \{|Z|^2,a+a_1\} & \\[0.2cm]
  & \hspace*{-1.5cm} =\frac{|Z|^2}{H_1}X(a)+\frac{(Z^2X_{\overline{Z}}^2-\overline{Z}^2X_Z^2)(a)}{2iH_1}-\frac{Z^2X_{\overline{Z}}(a)}{2iH_1^2}\{\overline{Z},H_1\}+\frac{\overline{Z}^2X_Z(a)}{2iH_1^2}\left\{Z,H_1\right\}.
\end{align*}
We would now like to eliminate the terms of magnitude $1/H_1$ that can be averaged via our cohomological equations. Namely, we set
$$a_2=-\frac{(Z^2X_{\overline{Z}}^2+\overline{Z}^2X_Z^2)(a)}{8H_1^2}.$$
This allows us, after defining $\mathbf{a}:=a+a_1 + a_2$, to get
\begin{equation}\label{e:poisson-bracket-a-normal-form}
 \{H_2^2+H_3^2,\mathbf{a}\}=\frac{H_2^2+H_3^2}{H_1}X(a)+\frac{1}{H_1}\mathbf{R}_a
 \end{equation}
 where $\mathbf{R}_a$ is an element of the form
 $$\mathbf{R}_a=\sum_{|\alpha|\geq 2}R_{a,\alpha}(x,y,z)\left(\frac{H_2}{H_1}\right)^{\alpha_2}\left(\frac{H_3}{H_1}\right)^{\alpha_3},$$
 with $R_{a,\alpha}$ that is compactly supported in $\mathcal{U}_0$.

For simplicity of exposition, we will use more compact notations for $\mathbf{a}$: 
\begin{equation}\label{e:convention-a}
\mathbf{a}=\sum_{|\alpha|\leq 2} a_\alpha\left(\frac{H_2}{H_1}\right)^{\alpha_2}\left(\frac{H_3}{H_1}\right)^{\alpha_3},
\end{equation}
where the functions $a_\alpha$ are also defined on $\ml{U}_0$ and explicitely given by
\begin{equation}\label{e:coeff-a-normal-form}
a_{(0,0)}:=a,\ a_{(1,0)}:=-V(a),\ a_{(0,1)}:=X_\perp(a),\
\end{equation}
and
\begin{equation}\label{e:coeff-a-normal-form2}
a_{(2,0)}:=-a_{(0,2)}:=-\frac{(X_\perp^2-V^2)(a)}{4},\quad  a_{(1,1)}:=-\frac{(X_\perp V+VX_\perp)(a)}{2}. 
\end{equation}

\section{Invariance properties}\label{s:invariance}

In view of Lemma~\ref{l:wigner}, we are left to study the Wigner type distribution $\bm{\mu}_{h}^{R,\varepsilon}$ given by \eqref{e:black_mu}. Our goal is to prove that any accumulation point $\mu_\infty$ of this sequence as $h\rightarrow 0$, $R\rightarrow+\infty$ and $\varepsilon\rightarrow 0$ (in this order) verifies certain invariance properties: 

\begin{prop}
\label{p:invariance} Let $(\psi_h,\lambda_h)$ be a sequence satisfying \eqref{e:semiclassical-eigenvalue} and set 
$$\mathcal{H}_1 (q,E):=\lambda_0-W(q)-EQ(q),
$$
and 
\begin{equation}\label{e:vectorfield-invariance}
\mathcal{X}_{W,Q}:=(\lambda_0-W)X+ \Omega_{\mathcal{H}_1}+EX(\ml{H}_1)\partial_E.
\end{equation}
 Let $\mu_\infty$ be any semiclassical measure obtained as a weak limit for the sequence of distributions $\bm{\mu}_h^{R,\varepsilon}$ defined from the sequence $(\psi_h,\lambda_h)$. Then, for every $b \in \mathcal{C}^1_c(\mathcal{U}_0 \times \IR)$,
\begin{align}
\label{e:invariance_desplayed}
\int_{\ml{U}_0\times\IR} \big( \mathcal{X}_{W,Q} (b) +X(\mathcal{H}_1) b \big) d\mu_\infty=0.
\end{align}
\end{prop}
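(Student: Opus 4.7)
The plan is to exploit the commutator identity $\langle [\widehat{P}_{h,\lambda}, \hat{B}_h] u_h, u_h\rangle_{L^2} = 0$ (which follows from $\widehat{P}_{h,\lambda} u_h = \lambda_h u_h$ and the selfadjointness of $\widehat{P}_{h,\lambda}$), applied to a test operator adapted to both the normal forms of Section~\ref{s:normalform} and to the new variable $E = h\mathbf{H}_1$. For $b \in \ml{C}^1_c(\ml{U}_0 \times \IR)$, let $\mathbf{b}(x,y,z,E)$ be the deformation \eqref{e:convention-a} obtained by applying the construction of Section~\ref{s:normalform} to $a = b(\cdot,\cdot,\cdot,E)$ for each frozen $E$, and set
\[
\hat{B}_h := \Op_h^w\big( h \mathbf{H}_1 \cdot \mathbf{b}(x,y,z, h\mathbf{H}_1) \cdot \tilde{\chi}_R^B \tilde{\chi}_R^{\mathbf{B}} \tilde{\chi}_\varepsilon^C \tilde{\chi}_\varepsilon^{\mathbf{C}}\big).
\]
The extra factor $h\mathbf{H}_1$---which equals the new variable $E$ at the Wigner level by construction of $\mu_\infty$ in~\eqref{e:full-wigner-H1}---ensures that the calculation below yields a regular expression all the way down to $E = 0$, so that the resulting invariance applies simultaneously to $\overline{\mu}_\infty$ (supported on $\{E=0\}$) and to the $\mu^\pm_{k,\infty}$ (supported on $\{E\neq 0\}$).

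Expanding the commutator via Moyal (using that Weyl commutators vanish at second order in $h$) reduces $\langle[\widehat{P}_{h,\lambda}, \hat{B}_h]u_h, u_h\rangle = 0$ to
\[
\big\langle \Op_h^w\big(h^{-1}\{\sigma_{\widehat{P}}, \sigma_{\hat{B}}\}\big) u_h, u_h\big\rangle = O(h).
\]
The heart of the proof is the explicit evaluation of $\{\sigma_{\widehat{P}}, h\mathbf{H}_1 \mathbf{b}\}$, where $\sigma_{\widehat{P}} = H_2^2 + H_3^2 + h QH_1 + W + O(h^2)$. Using Leibniz to split it as $h\mathbf{H}_1\,\{\sigma_{\widehat{P}}, \mathbf{b}\} + \mathbf{b}\,\{\sigma_{\widehat{P}}, h\mathbf{H}_1\}$, one applies the normal-form identities $\{H_2^2+H_3^2, \mathbf{H}_1\} = O(\varepsilon^2)$ (from \eqref{e:H1commuteZ2-normalform}) and $\{H_2^2+H_3^2, \mathbf{b}\} = \tfrac{|Z|^2}{H_1} X(b) + O(\varepsilon^2/|H_1|)$ (the $b$-analogue of \eqref{e:poisson-bracket-a-normal-form}), together with $\{H_1,Q\}=X(Q)$, $\{H_2, Q\}=X_\perp(Q)$, $\{H_3, Q\}=V(Q)$ and the corresponding identities for $W$. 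Applied against the correction $-V(b)H_2/H_1 + X_\perp(b)H_3/H_1$ in $\mathbf{b}$, these rules produce precisely the vector-field combinations $\Omega_W(b)$ and $\Omega_Q(b)$. Using $h H_1 = E$ identically on the new-variable side, the expansion gives at leading order
\[
h^{-1}\{\sigma_{\widehat{P}}, h\mathbf{H}_1 \mathbf{b}\} = |Z|^2 X(b) + EQX(b) - \Omega_W(b) - E\Omega_Q(b) + b\,X(\ml{H}_1) + E X(\ml{H}_1)\,\partial_E b + o_{h,R,\varepsilon}(1).
\]

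Finally, the identity $\Op_h^w(|Z|^2) u_h = (\lambda_h - \Op_h^w(hQH_1) - \Op_h^w(W))u_h + O_{L^2}(h^2)$ allows to substitute $|Z|^2$ by $\ml{H}_1 = \lambda_0 - W - EQ$ at the Wigner level. The two $\pm EQX(b)$ terms cancel, and with $\Omega_{\ml{H}_1} = -\Omega_W - E\Omega_Q$ the remaining pieces assemble into $\ml{X}_{W,Q}(b) + X(\ml{H}_1)\,b$. Passing to the triple limit $h \to 0^+$, $R \to \infty$, $\varepsilon \to 0^+$ and invoking Lemma~\ref{l:wigner} to identify $\bm{\mu}_h^{R,\varepsilon}$ with $\mu_h^{R,\varepsilon}$ yields the claimed invariance~\eqref{e:invariance_desplayed}.

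The main obstacle is the bookkeeping in the Poisson bracket computation: one has to track many error terms of orders $O(h\varepsilon)$, $O(\varepsilon^2)$, $O(h)$, and $O(1/|H_1|)$ and show they vanish in the triple limit, while ensuring that the surviving $O(1)$ pieces in $h^{-1}\{\sigma_{\widehat{P}}, \sigma_{\hat{B}}\}$ recombine into the compact vector-field expression. The structural point to keep in mind is that the $\Omega_{\ml{H}_1}(b)$ piece arises from the derivatives of $W$ and $Q$ hitting the $H_j/H_1$ corrections of $\mathbf{b}$, and these must combine precisely with the eigenvalue substitution $|Z|^2 \to \ml{H}_1$ to reveal the final form.
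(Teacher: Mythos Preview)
Your proposal is correct and follows essentially the same route as the paper: the same test operator $\Op_h^w(\mathbf{H}_1\,\mathbf{b}(\cdot,h\mathbf{H}_1)\,\bm{\chi}_{R,\varepsilon})$, the same use of the normal-form identities~\eqref{e:H1commuteZ2-normalform} and~\eqref{e:poisson-bracket-a-normal-form}, the same eigenvalue substitution $H_2^2+H_3^2\to \lambda_h-W-hQH_1$, and the same identification of the $\Omega$-terms from the $H_j/H_1$-corrections of $\mathbf{b}$. One step you fold into the ``bookkeeping'' but which the paper isolates as Lemma~\ref{l:remove-cutoff} is showing that when the Poisson bracket falls on the cutoffs $\bm{\chi}_{R,\varepsilon}$ the contribution is $O(\varepsilon^2)+O_{R,\varepsilon}(h^{1/2})$; this is precisely where~\eqref{e:H1commuteZ2-normalform} is used, so your outline already contains the needed ingredient. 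Also, since the proposition is stated directly for weak limits of $\bm{\mu}_h^{R,\varepsilon}$, invoking Lemma~\ref{l:wigner} at the end is not needed here.
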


In order to prove Proposition \ref{p:invariance}, we start by fixing an element $b$ in $\mathcal{C}^\infty_c(\ml{U}_0 \times \IR)$. Rather than looking at $b$ directly, we will consider test functions based on the normal form $\mathbf{b}$ from~\eqref{e:convention-a}, that is,
\begin{equation}
\label{e:normal_form_for_b}
\mathbf{b}(x,y,z,E) = \sum_{\vert \alpha \vert \leq 2} b_\alpha(x,y,z,E) \left(\frac{H_2}{H_1}\right)^{\alpha_2}\left(\frac{H_3}{H_1}\right)^{\alpha_3},
\end{equation}
where $b_\alpha$ is given by \eqref{e:coeff-a-normal-form} and \eqref{e:coeff-a-normal-form2} with $b$ instead of $a$ (the variable $E$ plays the role of a parameter).
One then has

$$\left\langle\left[\widehat{P}_{h,\lambda}, \Op_h^w\left(\mathbf{H}_1 \mathbf{b}(\cdot,h \mathbf{H}_1)\tilde{\chi}_\varepsilon^C\tilde{\chi}_\varepsilon^{\mathbf{C}}\tilde{\chi}_R^B\tilde{\chi}_R^{\mathbf{B}}\right)\right] u_h,u_h\right\rangle=\ml{O}(h^\infty).$$
On the other hand, we know that $\mathbf{H}_1 \mathbf{b}(\cdot,\cdot,\cdot,h \mathbf{H}_1)\tilde{\chi}_\varepsilon^C\tilde{\chi}_\varepsilon^{\mathbf{C}}\tilde{\chi}_R^B\tilde{\chi}_R^{\mathbf{B}}$ belongs to $S^1_{\text{cl}}$ while the symbol of $\widehat{P}_{h,\lambda}$ lies in $S^2_{\text{cl}}$. Then, by the composition rules for the Weyl quantization (see Appendix~\ref{a:pdo}), one finds
\begin{equation}
\label{e:Wignerequation}
\big \langle\Op_h^w\left(\left\{P_{h},\mathbf{H}_1 \mathbf{b}(\cdot,h \mathbf{H}_1)\tilde{\chi}_\varepsilon^C\tilde{\chi}_\varepsilon^{\mathbf{C}}\tilde{\chi}_R^B\tilde{\chi}_R^{\mathbf{B}}\right\}\right) u_h,u_h\big\rangle=\ml{O}(h^2),
\end{equation}
where $P_h = H_2^2+H_3^2+hQH_1+W$.
Recall that, as before, the measure in the $L^2$ scalar product is the standard Lebesgue measure thanks to our conventions for $\widehat{P}_{h,\lambda}$ and $u_h$ (see \eqref{e:conjugation-Ph}, \eqref{e:conjugated-eigenfunction} and \eqref{e:eigenvalue-conjugation}).

\subsection{Removing the derivatives of the cutoff function}

Before exploiting the normal form procedure, we start with the following Lemma in view of removing the cutoff functions $\tilde{\chi}_\varepsilon^C\tilde{\chi}_\varepsilon^{\mathbf{C}} \tilde{\chi}_R^B\tilde{\chi}_R^{\mathbf{B}}$ from the Poisson bracket:
\begin{lemm}\label{l:remove-cutoff} With the above conventions, one has:
\begin{equation}
\label{e:reduced_wigner_equation}
\left\langle \Op_h^w\big( \{P_h, \mathbf{H}_1\mathbf{b}(\cdot,h\mathbf{H}_1)\} \bm{\chi}_{R,\varepsilon} \big )u_h,u_h\right\rangle_{L^2(\operatorname{Leb})}=\ml{O}(\varepsilon^2)+\ml{O}_{R,\varepsilon}(h^{1/2}),
\end{equation}
where $\bm{\chi}_{R,\varepsilon} := \tilde{\chi}_\varepsilon^C\tilde{\chi}_\varepsilon^{\mathbf{C}} \tilde{\chi}_R^B\tilde{\chi}_R^{\mathbf{B}}$.
\end{lemm}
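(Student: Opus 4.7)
The plan is to Leibniz-expand the Poisson bracket in~\eqref{e:Wignerequation}, writing
\[
\{P_h, \mathbf{H}_1\mathbf{b}(\cdot,h\mathbf{H}_1)\bm{\chi}_{R,\varepsilon}\} = \{P_h, \mathbf{H}_1\mathbf{b}(\cdot,h\mathbf{H}_1)\}\bm{\chi}_{R,\varepsilon} + \mathbf{H}_1\mathbf{b}(\cdot,h\mathbf{H}_1)\{P_h, \bm{\chi}_{R,\varepsilon}\}.
\]
The first term on the right is exactly the symbol appearing in~\eqref{e:reduced_wigner_equation}, so~\eqref{e:Wignerequation} reduces the task to showing that the pairing of the second term against $u_h$ contributes only $O(\varepsilon^2)+O_{R,\varepsilon}(h^{1/2})$. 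A further Leibniz expansion of $\bm{\chi}_{R,\varepsilon}=\tilde\chi_\varepsilon^C\tilde\chi_\varepsilon^{\mathbf{C}}\tilde\chi_R^B\tilde\chi_R^{\mathbf{B}}$ yields four pieces, each featuring the derivative of exactly one cutoff; these split into an $\varepsilon$-family and an $R$-family, and I would estimate each separately.

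For the $\varepsilon$-cutoff pieces, consider first the one coming from $\{P_h,\tilde\chi_\varepsilon^C\}$. On the support of $\chi'(f)$ with $f=\varepsilon H_1/\sqrt{1+H_2^2+H_3^2}$, one has $H_2^2+H_3^2\sim\varepsilon^2 H_1^2$, and combining with $\tilde\chi_R^B$ forces $1+H_2^2+H_3^2\gtrsim\varepsilon^2 R$. Using~\eqref{e:commutators-hamiltonian} together with the identity $\{H_2^2+H_3^2,(1+H_2^2+H_3^2)^{-1/2}\}=0$, one computes
\[
\{H_2^2+H_3^2,f\}=\frac{2\varepsilon(K-1)H_2H_3}{\sqrt{1+H_2^2+H_3^2}},
\]
so the relevant symbol factors as $H_2H_3\cdot G_{R,\varepsilon,h}$ with $G_{R,\varepsilon,h}$ bounded in sup norm uniformly in $(h,R,\varepsilon)$ (the $\varepsilon$ in the numerator cancels the $\varepsilon H_1$ absorbed into $\sqrt{1+H_2^2+H_3^2}$). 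Dividing $G_{R,\varepsilon,h}$ by $1+H_2^2+H_3^2$ and exploiting the support constraint $1+H_2^2+H_3^2\gtrsim\varepsilon^2 R$ yields, via Calder\'on-Vaillancourt (in the spirit of the $C/(R\varepsilon^2)$ bound obtained in Section~\ref{s:cutoff} for $\chi_\varepsilon^C\tilde\chi_R^B$), an $L^2\to L^2$ operator norm of size $O((\varepsilon^2 R)^{-1})$. Combining this with the reinforced a priori bound $\|\Op_h^w(H_2^2+H_3^2)u_h\|_{L^2}\lesssim 1$ (which follows from~\eqref{e:eigenvalue-conjugation} and~\eqref{e:apriori-semiclassical1}) and using Cauchy-Schwarz against $\|\Op_h^w(H_j)u_h\|_{L^2}\lesssim 1$ to consume the $H_2H_3$ factor, the pairing of this piece against $u_h$ is bounded by $C(\varepsilon^2 R)^{-1}+O_{R,\varepsilon}(h)$, which is dominated by the claimed $O(\varepsilon^2)$ in the prescribed order of limits. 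The piece arising from $\tilde\chi_\varepsilon^{\mathbf{C}}$ is handled identically using Lemma~\ref{l:symbol-cone-normalform} and Remark~\ref{r:support}, noting that on the joint cone support one has $\mathbf{H}_1=H_1(1+O(\varepsilon^2))$; the contributions of $\{hQH_1,\tilde\chi_\varepsilon^C\}$ and $\{W,\tilde\chi_\varepsilon^C\}$ either carry an explicit factor of $h$ or fall under the same algebraic scheme.

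For the $R$-cutoff pieces, the derivatives $\chi'(\cdot/R)$ localize onto the compact region $H_1^2+H_2^2+H_3^2\in[R,2R]$, with a $1/R$ chain-rule prefactor. On this support $|H_1|\lesssim\sqrt{R}$, and each factor of the Poisson bracket $\{P_h,H_1^2+H_2^2+H_3^2\}$ is polynomial in $(H_1,H_2,H_3)$ with derivatives controlled via Lemma~\ref{l:polynomial} and Corollary~\ref{c:derivative-cone}. Multiplying by the prefactor $\mathbf{H}_1\mathbf{b}/R=O(R^{-1/2})$ and absorbing one factor of $H_2$, $H_3$, or $hH_1$ via composition into the a priori bounds~\eqref{e:apriori-semiclassical1} by Cauchy-Schwarz, together with the standard $O(h)$ remainders from the semiclassical composition formula, produces the bound $O_{R,\varepsilon}(h^{1/2})$, in the same spirit as the analogous remainder estimates of Section~\ref{s:cutoff}.

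The main technical obstacle is the simultaneous bookkeeping of the three scales $\varepsilon$, $R$, $h$. The quantitative $\varepsilon^2$ gain in the first group hinges on the algebraic identity $\{H_2^2+H_3^2,H_1\}=2(K-1)H_2H_3$ from~\eqref{e:commutators-hamiltonian} — which is precisely what transforms an $\varepsilon$-derivative of $\tilde\chi_\varepsilon^C$ into a quadratic factor in $(H_2,H_3)$ that can be consumed by the a priori bounds without spoiling the cone smallness — together with the uniform symbolic estimates of Corollary~\ref{c:derivative-cone}, Lemma~\ref{l:symbol-cone} and Lemma~\ref{l:symbol-cone-normalform} needed to guarantee that pseudodifferential composition remainders remain controlled in $(\varepsilon,R)$ as $h\to 0^+$.
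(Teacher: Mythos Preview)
Your overall strategy (Leibniz-expand and treat each cutoff derivative) matches the paper's, but you miss the key mechanism that delivers the $O(\varepsilon^2)$ bound, and several of your estimates do not hold as stated.

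The essential point you overlook is the normal form identity~\eqref{e:H1commuteZ2-normalform}: the whole purpose of replacing $H_1$ by $\mathbf{H}_1$ in Section~\ref{s:normalform} is precisely that $\{H_2^2+H_3^2,\mathbf{H}_1\}=H_2^2R_1+H_3^2R_2+H_2H_3R_3$ with $R_j=O(\varepsilon^2)$ on the cone. The paper uses this identity for \emph{both} the bold cone cutoff $\tilde\chi_\varepsilon^{\mathbf{C}}$ and the bold ball cutoff $\tilde\chi_R^{\mathbf{B}}$, obtaining directly a symbol of size $O(\varepsilon^2)$ times a quadratic in $(H_2,H_3)$, which is then absorbed by the a priori estimate~\eqref{e:apriori-semiclassical1}. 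You instead base your argument on $\{H_2^2+H_3^2,H_1\}=2(K-1)H_2H_3$, which concerns $H_1$, not $\mathbf{H}_1$; saying the $\tilde\chi_\varepsilon^{\mathbf{C}}$ piece is ``handled identically'' does not work, because differentiating that cutoff produces $\{H_2^2+H_3^2,\mathbf{H}_1\}$, and without~\eqref{e:H1commuteZ2-normalform} you cannot see why it is small.

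Two further issues. First, by Remark~\ref{r:support}, the derivative of $\tilde\chi_\varepsilon^C$ is supported where $\tilde\chi_\varepsilon^{\mathbf{C}}=0$ (and similarly for the ball cutoffs), so those non-bold pieces vanish identically; your elaborate $O((\varepsilon^2R)^{-1})$ estimate for them is unnecessary. Second, your counting for that estimate is off: after writing the symbol as $H_2H_3\cdot G$ with $G=O(1)$ and absorbing one $(1+H_2^2+H_3^2)$ via the eigenvalue equation, the remaining symbol $H_2H_3\,G/(1+H_2^2+H_3^2)$ still has sup norm $O(1)$, not $O((\varepsilon^2R)^{-1})$, so Cauchy--Schwarz against $\|\Op_h^w(H_j)u_h\|$ only yields $O(1)$. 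Likewise, for the $R$-cutoff pieces, there is no source for the $h^{1/2}$ you claim; the paper gets $O(\varepsilon^2)$ there as well, again from~\eqref{e:H1commuteZ2-normalform}.
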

\begin{proof} In light of~\eqref{e:Wignerequation}, proving this equality amounts to show that
 $$\left\langle \Op_h^w\left(\mathbf{H}_1\mathbf{b}(\cdot,h\mathbf{H}_1)\{H_2^2+H_3^2,\bm{\chi}_{R,\varepsilon}\}\right)u_h,u_h\right\rangle_{L^2(\text{Leb})}=\ml{O}(\varepsilon^2)+\ml{O}_{R,\varepsilon}(h^{1/2}).$$
 The main observation is that, when considering $\{H_2^2+H_3^2,\bm{\chi}_{R,\varepsilon}\}$, the symbol becomes either compactly supported in $(\xi,\eta,\zeta)$ (if one differentiates $\tilde{\chi}_R^{\mathbf{B}}$), or supported in a region where 
 $$\frac{1}{10}\sqrt{1+H_2^2+H_3^2}\leq \varepsilon|H_1|\leq 10\sqrt{1+H_2^2+H_3^2}$$
 (if one differentiates $\tilde{\chi}_R^{\mathbf{C}}$). In the first case, one ends up with the bracket $\frac{1}{R}\{H_2^2+H_3^3,\mathbf{H}_1\}$ multiplied by $\mathbf{b}(\cdot,h\mathbf{H}_1)\mathbf{H}_1$. Modulo multiplication by $H_2$ or $H_3$, this is a bounded symbol whose supremum is of order $\ml{O}(\varepsilon^2)$ (with the constant being independent of $R$) thanks to~\eqref{e:H1commuteZ2-normalform} and to the compact support property. Applying the composition rule one more time and the Calder\'on-Vaillancourt Theorem together with the a priori estimate~\eqref{e:apriori-semiclassical1}, it yields the expected upper bound for these terms.
 
 It remains to deal with the case where one differentiates $\tilde{\chi}_R^{\mathbf{C}}$. In that case, we cannot exploit the fact that the support is bounded but we can remark that we end up with terms that are of the form
 $$\frac{\varepsilon \mathbf{b}(\cdot,h\mathbf{H}_1)\mathbf{H}_1}{\sqrt{1+H_2^2+H_3^2}}\{H_2^2+H_3^3,\mathbf{H}_1\}.
 $$
 Using Lemma~\ref{l:symbol-cone}, the first term lies in the class of symbol $S^0_{\text{cl}}$ inside the support of our symbol (recall that we have differentiated $\tilde{\chi}_{\varepsilon}^{\mathbf{C}}$) and is uniformly bounded in terms of $\varepsilon$ and $R$ (but not small a priori). Hence, we are left with showing that the term resulting from the Poisson bracket is indeed small. Again, we apply~\eqref{e:H1commuteZ2-normalform} together with the semiclassical a priori estimate~\eqref{e:apriori-semiclassical1} and we conclude thanks to the composition rule for pseudodifferential operators and the Calder\'on-Vaillancourt Theorem.
\end{proof}

\subsection{Proof of Proposition \ref{p:invariance}}

Let $b \in \mathcal{C}_c^\infty(\mathcal{U}_0 \times \IR)$, we consider as before the small deformation $\mathbf{b}(x,y,x,E)$ of $b$ given by \eqref{e:normal_form_for_b}. Thanks to Lemma~\ref{l:remove-cutoff}, one has
\begin{equation}\label{e:Wignerequation2}
\big \langle\Op_h^w\left(\left\{P_{h},\mathbf{H}_1 \mathbf{b}(\cdot,h \mathbf{H}_1)\right\}\tilde{\chi}_\varepsilon^C\tilde{\chi}_\varepsilon^{\mathbf{C}}\tilde{\chi}_R^B\tilde{\chi}_R^{\mathbf{B}}\right) u_h,u_h\big\rangle=\ml{O}(\varepsilon^2)+\ml{O}_{R,\varepsilon}(h^{\frac{1}{2}}).
\end{equation}
Hence, we need to understand
\begin{align}\label{e:normalform-semiclassical}
\{P_h,  \mathbf{H}_1 \mathbf{b} \}= \{H_2^2+H_3^2, \mathbf{H}_1 \mathbf{b}\}+\{ W ,  \mathbf{H}_1 \mathbf{b} \} + \{ h H_1 Q,  \mathbf{H}_1 \mathbf{b} \}.
\end{align}
Recalling~\eqref{e:H1commuteZ2-normalform} and~\eqref{e:poisson-bracket-a-normal-form}, one has
$$\{ H_2^2+H_3^2,\mathbf{H}_1\}= R_1H_2^2+R_2H_3^2+R_3H_2H_3,$$
and 
$$\{H_2^2+H_3^2,\mathbf{b}\}=\frac{H_2^2+H_3^2}{H_1}X(b)+\frac{1}{H_1}\mathbf{R}_b+h\{ H_2^2+H_3^2,\mathbf{H}_1\}\sum_{|\alpha|\leq 2}\partial_Eb_\alpha\left(\frac{H_2}{H_1}\right)^{\alpha_2}\left(\frac{H_3}{H_1}\right)^{\alpha_3},$$
where $\mathbf{R}_b$ and $(R_j)_{j\geq 3}$ belongs to the class of symbol $S^0_{\text{cl}}$ inside the support of our cutoff functions with supremum that is of order $\ml{O}(\varepsilon^2)$. Hence, using the eigenvalue equation~\eqref{e:eigenvalue-conjugation} and the semiclassical a priori estimates~\eqref{e:apriori-semiclassical1} together with the composition rule for pseudodifferential operators and the expressions for $\mathbf{b}$ and $\mathbf{H}_1$ given in~\eqref{e:normal_form_for_b} and~\eqref{e:normalform-H1}, equation~\eqref{e:Wignerequation2} becomes
\begin{multline}\label{e:Wignerequation2bis}
\big \langle\Op_h^w\left(X(b)\left(\lambda_h-W-hQH_1\right)\tilde{\chi}_\varepsilon^C\tilde{\chi}_\varepsilon^{\mathbf{C}}\tilde{\chi}_R^B\tilde{\chi}_R^{\mathbf{B}}\right) u_h,u_h\big\rangle\\[0.2cm] +
\big \langle\Op_h^w\left(\left\{W+hH_1Q,\mathbf{H}_1 \mathbf{b}(\cdot,h \mathbf{H}_1)\right\}\tilde{\chi}_\varepsilon^C\tilde{\chi}_\varepsilon^{\mathbf{C}}\tilde{\chi}_R^B\tilde{\chi}_R^{\mathbf{B}}\right) u_h,u_h\big\rangle=\ml{O}(\varepsilon^2)+\ml{O}_{R,\varepsilon}(h^{\frac{1}{2}}).
\end{multline}
Hence, it remains to analyze the terms
$$\left\{W+hH_1Q,\mathbf{H}_1 \mathbf{b}(\cdot,h \mathbf{H}_1)\right\}.$$
To do that, we recall that, from the exact expressions for $\mathbf{H}_1$ and $\mathbf{b}$ given in~\eqref{e:normalform-H1},~\eqref{e:coeff-a-normal-form} and~\eqref{e:normal_form_for_b}, one has
$$\mathbf{H}_1=H_1\left(1+\sum_{|\alpha|\in\{2,3\}}P_{\alpha}(x,y,z)\left(\frac{H_2}{H_1}\right)^{\alpha_2}\left(\frac{H_3}{H_1}\right)^{\alpha_3}\right),$$
and
\begin{align*}\mathbf{b}(\cdot,h\mathbf{H}_1)\mathbf{H}_1 & =b(\cdot,h\mathbf{H}_1)H_1+X_\perp(b)(\cdot,h\mathbf{H}_1)H_3-V(b)(\cdot,h\mathbf{H}_1)H_2\\[0.2cm]
 & \quad +\sum_{|\alpha|\geq 2}Q_{\alpha}(x,y,z,h\mathbf{H}_1)\frac{H_2^{\alpha_2}H_3^{\alpha_3}}{H_1^{|\alpha|-1}},
\end{align*}
where $Q_\alpha$ are smooth compactly supported functions. We also observe that, in the support of our cutoff functions, one has 
$$\{W+hH_1Q,h\mathbf{H}_1\}=\{W+hH_1Q,hH_1\}+\ml{O}(\varepsilon).$$
For similar reasons, the terms of order $|\alpha|\geq 2$ in the expression of $\mathbf{b}(.,h\mathbf{H}_1)\mathbf{H}_1$ yields a contribution of order $\ml{O}(\varepsilon)$ and~\eqref{e:Wignerequation2bis} can be rewritten as
\begin{multline}\label{e:Wignerequation3}
\big \langle\Op_h^w\left(X(b)\left(\lambda_h-W-hQH_1\right)\tilde{\chi}_\varepsilon^C\tilde{\chi}_\varepsilon^{\mathbf{C}}\tilde{\chi}_R^B\tilde{\chi}_R^{\mathbf{B}}\right) u_h,u_h\big\rangle+\ml{O}(\varepsilon)+\ml{O}_{R,\varepsilon}(h^{\frac{1}{2}})\\[0.2cm] =
-\big \langle\Op_h^w\left(\left\{W+hH_1Q,H_1 b+X_\perp(b)H_3-V(b)H_2\right\}\tilde{\chi}_\varepsilon^C\tilde{\chi}_\varepsilon^{\mathbf{C}}\tilde{\chi}_R^B\tilde{\chi}_R^{\mathbf{B}}\right) u_h,u_h\big\rangle.
\end{multline}
As one has, on the suport of our functions,
\begin{multline*}
\{ W ,  H_1 b+X_\perp(b)H_3-V(b)H_2 \} \\[0.2cm] = - X(W)b - hH_1 X(W) \partial_Eb-X_\perp(b)V(W)+V(b)X_\perp(W)+\ml{O}(\varepsilon)
\end{multline*}
and
\begin{align*}
\{ hH_1 Q,  H_1 b+X_\perp(b)H_3-V(b)H_2 \} & = - hH_1 X(Q) b - (hH_1)^2  X(Q) \partial_Eb + hH_1 Q X(b)\\[0.2cm] & \quad  - hH_1X_\perp(b)V(Q)+hH_1V(b)X_\perp(Q)+\ml{O}(\varepsilon),
\end{align*}
we finally obtain the expected result by letting $h\rightarrow 0^+$, $R\rightarrow+\infty$ and $\varepsilon\rightarrow 0^+$ (in this order) in~\eqref{e:Wignerequation3}.

\section{Summary of the properties of $\mu_\infty$}
\label{s:main_results_and_proofs}

In this short section, we summarize our description of the semiclassical measures $\mu_\infty$ obtained as weak limits of the Wigner distributions $\bm{\mu}_{h}^{R,\varepsilon}$ given by \eqref{e:black_mu} (or equivalently~\eqref{e:full-wigner-H1}). More precisely, as a consequence of Propositions~\ref{p:support_properties} and~\ref{p:invariance}, one has the following Theorem:   

\begin{theo}\label{t:maintheo} Let $Q,W\in\ml{C}^\infty(\ml{M},\IR)$ such that $\|Q\|_{\ml{C}^0}<1$ and let $\lambda_0\geq\min\ W$. Given a sequence $(\psi_h,\lambda_h)$ satisfying \eqref{e:semiclassical-eigenvalue} then any measure $\mu_\infty$ obtained from the sequence~\eqref{e:full-wigner-H1} decomposes as:
\begin{equation}
\label{e:decomposition_mu}
\mu_\infty(q,E) = \overline{\mu}_\infty(q,E) + \sum_{k=0}^\infty  \big( \mu^+_{k,\infty}(q,E) + \mu^-_{k,\infty}(q,E) \big),
\end{equation}
where $\overline{\mu}_\infty$ and $(\mu^\pm_{k,\infty})_{k\geq 0}$ are finite non-negative Radon measures satisfying the following concentration properties:
\begin{enumerate}[(S.1)]

\item
\label{i:support_1} $\operatorname{supp} \overline{\mu}_{\infty} \subset \mathcal{M}_{\lambda_0,W} \times \{ 0 \}$,
with $\mathcal{M}_{\lambda_0,W} := \{ q \in \mathcal{M} \, : \, \lambda_0 - W(q) \geq 0 \}$, 
\medskip

\item
\label{i:support_2} for every $k \in \mathbb{Z}_+$,
$$\operatorname{supp} \mu^\pm_{k,\infty} \subset \mathcal{H}_\pm^{-1}(2k+1) \subset\mathcal{U}_{\lambda_0,W}\times\IR_{\pm}^*.$$
\end{enumerate}
Moreover, they verify the following invariance properties:

\begin{enumerate}[(P.1)]

\item
\label{i_invariance_1} for every $a \in \mathcal{C}^1_c(\mathcal{U}_{\lambda_0,W})$, 
$$
\int_{\ml{M}\times\{0\}}  Y_W \big(a \big) \, d\overline{ \mu}_\infty = 0,
$$ 
with $Y_W$ being defined in~\eqref{e:good-vectorfield},
\medskip

\item 
\label{i:invariance_2} for every $k \in \mathbb{N}$ and every $a \in \mathcal{C}^1_c(\mathcal{M}\times\IR^*)$,
$$
\int_{\ml{M}\times\IR_\pm^*} \frac{\mathcal{X}_{W,Q} (a)}{E}   d\mu^\pm_{k,\infty}  = 0,
$$
with $\mathcal{X}_{W,Q}$ being defined in~\eqref{e:vectorfield-invariance},
\item 
\label{i:invariance_3} for every $a \in \mathcal{C}^1(\mathcal{M})$,
$$
\int_{(\mathcal{M}_{\lambda_0,W}\setminus\mathcal{U}_{\lambda_0,W})\times\{0\}} \big(\Omega_{W}(a)+X(W)a\big)   d\overline{\mu}_{\infty}  = 0.
$$
\end{enumerate}

\end{theo}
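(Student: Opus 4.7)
The decomposition \eqref{e:decomposition_mu} together with (S.\ref{i:support_1}) and (S.\ref{i:support_2}) are already provided by Proposition~\ref{p:support_properties}, read in a local chart $\ml{U}_0$ and glued globally by a partition of unity, so only the three invariance properties require proof. All of them will be obtained from Proposition~\ref{p:invariance} by testing on carefully chosen symbols and exploiting the algebraic identities
$$\mathcal{X}_{W,Q}(E) = E\,X(\mathcal{H}_1),\qquad \mathcal{X}_{W,Q}(\mathcal{H}_1) = \mathcal{H}_1\,X(\mathcal{H}_1),\qquad \mathcal{X}_{W,Q}(\mathcal{H}_\pm) = 0,$$
together with the boundary identity $\mathcal{X}_{W,Q}\rest_{E=0} = (\lambda_0 - W)\,Y_W$; each of these follows by a short computation from the definitions, using $\Omega_f(f)\equiv 0$ and $(\lambda_0-W)\,\Omega_{\ln(\lambda_0-W)} = -\Omega_W$.

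For (P.\ref{i:invariance_2}), given $a \in \ml{C}^1_c(\ml{M}\times\IR^*_\pm)$ and a cutoff $\chi$ with $\chi(2k+1)=1$ and $\chi(2k'+1)=0$ for every $k'\neq k$, I would test Proposition~\ref{p:invariance} against $b(q,E) = a(q,E)\,\chi(\mathcal{H}_\pm(q,E))/E$. Using $\mathcal{X}_{W,Q}(\mathcal{H}_\pm)=0$ and $\mathcal{X}_{W,Q}(E)=E\,X(\mathcal{H}_1)$, the two terms in the Proposition collapse to $\chi(\mathcal{H}_\pm)\,\mathcal{X}_{W,Q}(a)/E$; on $\overline{\mu}_\infty$ the factor $a$ vanishes (since $a=0$ near $E=0$), while $\chi(\mathcal{H}_\pm)$ kills every $\mu_{k',\infty}^\pm$ with $k'\neq k$ as well as every $\mu_{k',\infty}^\mp$ (the latter because $\mathcal{H}_\pm = -(2k'+1)$ on $\mathcal{H}_\mp^{-1}(2k'+1)$), so only $\mu_{k,\infty}^\pm$ contributes and (P.\ref{i:invariance_2}) follows.

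For (P.\ref{i_invariance_1}), given $a\in\ml{C}^1_c(\mathcal{U}_{\lambda_0,W})$, I would test with $b(q,E) = a(q)\,\chi(E)/\mathcal{H}_1(q,E)$ where $\chi$ is a cutoff with $\chi(0)=1$ supported narrowly enough that $\mathcal{H}_1>0$ on $\operatorname{supp}(a)\times\operatorname{supp}(\chi)$ (possible thanks to $\|Q\|_\infty<1$). Using $\mathcal{X}_{W,Q}(\mathcal{H}_1)=\mathcal{H}_1\, X(\mathcal{H}_1)$ the integrand simplifies to $\chi(E)\,\mathcal{X}_{W,Q}(a)/\mathcal{H}_1 + a\,\chi'(E)\,E\,X(\mathcal{H}_1)/\mathcal{H}_1$, which equals $Y_W(a)$ at $E=0$ (by the boundary identity) and $\pm(2k+1)^{-1}\,\mathcal{X}_{W,Q}(a\chi)/E$ on $\mathcal{H}_\pm^{-1}(2k+1)$ (where $\mathcal{H}_1=\pm(2k+1)E$). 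The $\mu_{k,\infty}^\pm$-contributions then vanish by (P.\ref{i:invariance_2}) applied to $a\chi$, legitimate for each fixed $k$ after inserting a harmless cutoff away from $E=0$ since $\operatorname{supp}\mu_{k,\infty}^\pm$ is bounded away from $E=0$, and (P.\ref{i_invariance_1}) follows.

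The most delicate step is (P.\ref{i:invariance_3}). Testing the Proposition with $b(q,E)=a(q)\,\chi(E)$ for $a\in\ml{C}^1(\ml{M})$ and $\chi(0)=1$, the clean identity $\mathcal{X}_{W,Q}(a\chi)+X(\mathcal{H}_1)a\chi = \mathcal{X}_{W,Q}(Ea\chi)/E$ shows that the $\mu_{k,\infty}^\pm$-contributions all vanish by (P.\ref{i:invariance_2}) applied to $Ea\chi$. The integrand on $\overline{\mu}_\infty$ is $(\lambda_0-W)X(a)-\Omega_W(a)-X(W)a$, which on the boundary $\ml{M}_{\lambda_0,W}\setminus\mathcal{U}_{\lambda_0,W}$ reduces to $-(\Omega_W(a)+X(W)a)$ and on the interior $\mathcal{U}_{\lambda_0,W}$ equals $Y_W\big((\lambda_0-W)a\big)$ (using $Y_W(\lambda_0-W)=-X(W)$). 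The main obstacle is to conclude that the interior integral vanishes, which would follow immediately from (P.\ref{i_invariance_1}) if $(\lambda_0-W)a$ were compactly supported in $\mathcal{U}_{\lambda_0,W}$; since it is not, I would approximate it by $\eta_\epsilon(\lambda_0-W)a$ with $\eta_\epsilon=\theta((\lambda_0-W)/\epsilon)$, $\theta\in\ml{C}^\infty(\IR)$ equal to $0$ near $0$ and $1$ near $+\infty$. The crucial feature is that $\eta_\epsilon$ depends only on $W$, which forces $\Omega_W(\eta_\epsilon)\equiv 0$ by the derivation property, so that $Y_W(\eta_\epsilon)=-X(W)\,\theta'((\lambda_0-W)/\epsilon)/\epsilon$ and the commutator error $\int(\lambda_0-W)a\,Y_W(\eta_\epsilon)\,d\overline{\mu}_\infty$ becomes a uniformly bounded integral supported in the shrinking shell $\{\epsilon\le\lambda_0-W\le 2\epsilon\}$ whose integrand tends to zero pointwise as $\epsilon\to 0^+$. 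Dominated convergence combined with (P.\ref{i_invariance_1}) applied to $\eta_\epsilon(\lambda_0-W)a$ then yields the vanishing of the interior integral, and (P.\ref{i:invariance_3}) follows by assembling the three vanishing pieces.
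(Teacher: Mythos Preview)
Your proof is correct and follows essentially the same strategy as the paper: the support statements are read off Proposition~\ref{p:support_properties}, and (P.1)--(P.3) are all derived from Proposition~\ref{p:invariance} by testing against well-chosen symbols and localizing with cutoffs. The algebraic identities you isolate (in particular $\mathcal{X}_{W,Q}(E)=E\,X(\mathcal{H}_1)$ and $\mathcal{X}_{W,Q}(\mathcal{H}_1)=\mathcal{H}_1\,X(\mathcal{H}_1)$) make transparent why dividing by $E$ or by $\mathcal{H}_1$ absorbs the extra $X(\mathcal{H}_1)b$ term; the paper does the same thing but more tersely.

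The only place where the execution differs noticeably is (P.\ref{i:invariance_3}). The paper first shrinks a cutoff $\chi(E/\delta)$ to collapse the identity of Proposition~\ref{p:invariance} onto $\{E=0\}$ (killing all $\mu_{k,\infty}^\pm$ by dominated convergence in one stroke), and then shrinks a second cutoff $\chi((W-\lambda_0)/\delta)$ to localize on the critical set $\{W=\lambda_0\}$. You instead keep a fixed cutoff in $E$, kill each $\mu_{k,\infty}^\pm$ separately via (P.\ref{i:invariance_2}), and then remove the interior contribution by approximating $(\lambda_0-W)a$ with $\eta_\epsilon(\lambda_0-W)a$ and invoking (P.\ref{i_invariance_1}). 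Both arguments are valid; yours trades one dominated-convergence step for an appeal to the already-proved (P.\ref{i:invariance_2}), and handles the boundary localization ``from the inside'' rather than ``from the outside'', but the underlying mechanism (the commutator with the cutoff is bounded and supported on a shrinking shell because $\Omega_W(\eta_\epsilon)=0$) is the same.
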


\begin{proof}
  The only remaining point compared with Proposition~\ref{p:invariance} is to verify that the invariance properties restrict to each layer $\mathcal{H}_\pm^{-1}(2k+1) $ and $\mathcal{M}\times\{0\}.$ To see this, we first work inside $\mathcal{U}_{\lambda_0,W}\times\IR$ and prove properties \textit{(P.1)} and \textit{(P.2)}. We let $k\in\IZ_+$ and $a\in\ml{C}^{\infty}_{c}(\ml{U}_{\lambda_0,W}\times\IR)$ whose support does not intersect $\text{supp}(\mu_\infty)\setminus\mathcal{H}_\pm^{-1}(2k+1)$. For such a function, we deduce the expected property \textit{(P.2)}. If we now consider $a$ to be an element in $\mathcal{C}^1_c(\ml{M}\times\IR^*)$, then it can be splitted as a sum of a function of the previous form and a function that is supported away from $\mathcal{H}_\pm^{-1}(2k+1)$. Thus, we obtain property \textit{(P.2)} for the expected class of functions. Combining this with Proposition~\ref{p:invariance}, we also find that, for every $a\in\mathcal{C}^1_c(\mathcal{U}_{\lambda_0,W}\times\IR)$,
  $$\int_{\ml{M}\times\{0\}}  Y_W \big((\lambda_0-W)a \big) \, d\overline{ \mu}_\infty = 0,
$$ 
from which we infer \textit{(P.1)}. It now remains to discuss what happens on the critical set 
$$\mathcal{M}_{\lambda_0,W}\setminus\mathcal{U}_{\lambda_0,W}:=\{q\in\mathcal{M}: W(q)=\lambda_0\}.$$ To do this, we rewrite the conclusion of Proposition~\ref{p:invariance} slightly more explicitely: for every $a\in\mathcal{C}^{\infty}_c(\ml{M}\times\IR)$,
$$
\int_{\ml{M}\times\IR}\left((\lambda_0-W)X(a)+\Omega_{\lambda_0-W}(a)-E\Omega_Q(a)+EX(\mathcal{H}_1)\partial_Ea+X(\mathcal{H}_1)a\right)d\mu_{\infty}=0.
$$
If we take $a$ to be of the form $\chi(E/\delta)b(q)$ where $b\in\ml{C}^{\infty}(\mathcal{M})$ and where $\chi$ is the same cutoff function as in the previous sections, we find using the dominated convergence Theorem that, for every $b\in\ml{C}^{\infty}(\ml{M})$, 
$$
\int_{\ml{M}\times\{0\}}\left((\lambda_0-W)X(b)-\Omega_{W}(b)-X(W)b\right)d\overline{\mu}_{\infty}=0.
$$
We now take the test function $b$ to be of the form $\chi((W(q)-\lambda_0)/\delta)\tilde{b}(q)$ with $\chi$ a smooth cutoff function (near $0$) as above. Letting $\delta\rightarrow 0$ in the previous equality, we find thanks to the dominated convergence Theorem
$$
\int_{(\mathcal{M}_{\lambda_0,W}\setminus\mathcal{U}_{\lambda_0,W})\times\{0\}}\big(\Omega_{W}(\tilde{b})+X(W)\tilde{b}\big)d\overline{\mu}_{\infty}=0.
$$
\end{proof}


Note that, compared with Theorem~\ref{t:simplified_theorem} from the introduction, this result holds without any assumption on $\lambda_0$. It also involves the more general measure $\mu_\infty$ which describes precisely how $H_1$ escape at infinity. 
\medskip

Let us now explain that it directly implies Theorem~\ref{t:simplified_theorem}. We also remark that, if $\lambda_0>\max W$, then \textit{(P.1)} reads equivalently as
$\int_{\ml{M}\times\{0\}}  Y_W(a) d\overline{ \mu}_\infty = 0$ for every $a\in\ml{C}^1(\ml{M})$. This implies the property of $\overline{\nu}_\infty$ in Theorem~\ref{t:simplified_theorem} by letting
$$\overline{\nu}_\infty(q) = \int_\IR \overline{\mu}_\infty(q,dE).$$

Notice, since $\Omega_{\mathcal{H}_1}(\mathcal{H}_1) = 0$, that $\mathcal{X}_{W,Q}(\mathcal{H}_\pm) = 0$. This implies in particular that the vector field $E^{-1}\mathcal{X}_{W,Q}$ is tangent to the level sets $\mathcal{H}_\pm^{-1}(2k+1)$ and thus induces a well-defined flow on these layers. Finally, we can derive from~\textit{(S.2)} and \textit{(P.2)} that
$$
\int_{\ml{M}\times\IR^*_{\pm}}\left( \left(\pm(2k+1)+Q\right)Y_W(a)-\Omega_Q(a)+X(\ml{H}_1)\partial_Ea  \right) d\mu^\pm_{k,\infty}  = 0,
$$
which implies the last part of Theorem~\ref{t:simplified_theorem} by letting
$$\nu_{k,\infty}^\pm(q) := \int_\IR \mu_{k,\infty}^\pm(q,dE).$$

\section{The case of the flat torus}\label{s:torus}

In this section, we briefly discuss the case where $M= \mathbb{T}^2 = \IR^2/2\pi\IZ^2$, $Q=W=0$ and $g=dx^2+dy^2$ is the canonical Euclidean metric. Our aim is to show examples of different sequences of eigenfunctions for the operator $-h^2 \Delta_{\operatorname{sR}}$ which select any given choice among the semiclassical measures $\overline{\mu}_\infty$ and $\mu_{k,\infty}^{\pm}$ by putting their total mass on them as $h \to 0^+$. 
\medskip

In this particular example, the operators $X$, $X_\perp$ and $V$ can be written by global formulas in the canonical coordinates $(x,y,z) \in \mathbb{T}^3 \simeq S\mathbb{T}^2$. Precisely:
$$X=\cos z\partial_x+\sin z \partial_y,\ X_\perp=\sin z\partial_x-\cos z\partial_y,\ \text{and}\ V=\partial_z.$$ 
We restrict ourselves to search for solutions to~\eqref{e:semiclassical-eigenvalue} of the particular form 
$$\psi_h(x,y,z)=u_h(z)e^{in \cdot (x,y)},\quad \mathbf{n}=(n_1,n_2)\in\IZ^2.$$
As we impose that $\psi_h$ solves our eigenvalue problem, then $u_h$ must satisfy
$$h^2(n_1\sin z-n_2\cos z)^2u_h(z)-h^2u_h''(z)=u_h(z),$$
or equivalently
$$-\frac{1}{\|\mathbf{n}\|^2}u_h''(z)+\sin^2(z-z_\mathbf{n})u_h(z)=\frac{1}{h^2\|\mathbf{n}\|^2}u_h(z),$$
with $(\cos z_\mathbf{n},\sin z_\mathbf{n})=\mathbf{n}/\|\mathbf{n}\|$. We recognize in this expression the semiclassical Mathieu operator 
$$
\widehat{M}_\mathbf{n} := - \frac{1}{\Vert \mathbf{n} \Vert^2} \partial_z^2 + \sin^2(z- z_\mathbf{n}),
$$
on the circle $\mathbb{S}^1$. Note that it is a one-dimensional Schr\"odinger operator with a double well potential. Hence, we are led to the equation
\begin{equation}
\label{e:Mathieu_equation}
\widehat{M}_\mathbf{n} u_\mathbf{n} = \lambda(\mathbf{n}) u_\mathbf{n}, \quad \lambda(\mathbf{n}) = \frac{1}{(h_\mathbf{n} \Vert \mathbf{n} \Vert)^2}.
\end{equation}
Since $\widehat{M}_\mathbf{n}$ has compact resolvent on $L^2(\mathbb{S}^1)$, for every $\mathbf{n}\in\IZ^2$ there is an increasing sequence of eigenvalues $(\lambda_k(\mathbf{n}))_{k\geq 0}$ with corresponding (normalized) eigenfunctions $(u_{\mathbf{n},k}(z))_{k\geq 0}$ of $\widehat{M}_\mathbf{n}$. Note that, up to translation by $z_{\mathbf{n}}$, we can restrict ourselves to the case where $z_\mathbf{n}=0$ which amounts to take a lattice point of the form $\mathbf{n}=(n,0)$ with say $n>0$. Under this assumption, we first show the following:
\begin{lemm}
\label{l:quasimodes_for_mathieu}
For every $k \in \mathbb{N}$, there exists $\lambda_k(n,0):=\lambda_k(n) \in \operatorname{Sp}_{L^2(\mathbb{S}^1)}(\widehat{M}_{(n,0)})$ such that:
\begin{equation}
\label{e:eigenvalues_asymptic}
\lambda_k(n) = \frac{(2k+1)}{ n } \left( 1 + \ml{O}_k\left(\frac{1}{\sqrt{n}}\right) \right), \quad \text{as }  n \to + \infty.
\end{equation}
\end{lemm}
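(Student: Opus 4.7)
The plan is to exploit the semiclassical structure of $\widehat{M}_{(n,0)} = -\frac{1}{n^2}\partial_z^2 + \sin^2(z)$ as a one-dimensional Schr\"odinger operator with small parameter $h = 1/n$ and double-well potential $V(z) = \sin^2(z)$. Near the minimum $z = 0$ one has $V(z) = z^2 - z^4/3 + O(z^6)$, so after the rescaling $\zeta = \sqrt{n}\,z$ the operator takes the form $\tfrac{1}{n}\bigl(-\partial_\zeta^2 + \zeta^2\bigr) + O(n^{-2}\zeta^4)$, i.e.\ $1/n$ times the one-dimensional harmonic oscillator up to a relative correction of order $1/n$. The existence of eigenvalues of $\widehat{M}_{(n,0)}$ close to $(2k+1)/n$, associated to Hermite modes, should then follow from a quasi-mode construction together with the spectral theorem, since $\widehat{M}_{(n,0)}$ is selfadjoint with compact resolvent on $L^2(\mathbb{S}^1)$.

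Concretely, I would fix $k \in \mathbb{N}$, let $\phi_k \in L^2(\IR)$ be the $L^2$-normalized $k$-th Hermite function satisfying $(-\partial_\zeta^2 + \zeta^2)\phi_k = (2k+1)\phi_k$, pick a cutoff $\chi \in \ml{C}^\infty_c((-\pi/2,\pi/2))$ equal to $1$ near $0$, and set
$$
u_n^{\operatorname{app}}(z) := n^{1/4}\chi(z)\phi_k(\sqrt{n}\,z),
$$
viewed as a smooth function on $\mathbb{S}^1$ by periodic extension. The super-exponential decay of $\phi_k$ together with the change of variables $\zeta = \sqrt{n}\,z$ immediately gives $\Vert u_n^{\operatorname{app}}\Vert_{L^2(\mathbb{S}^1)} = 1 + \ml{O}_k(n^{-\infty})$. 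Using $-\phi_k'' + \zeta^2\phi_k = (2k+1)\phi_k$ and Leibniz' rule, the harmonic-oscillator contribution cancels the eigenvalue term exactly on the support of $\chi$, leaving
$$
\Bigl(\widehat{M}_{(n,0)} - \tfrac{2k+1}{n}\Bigr)u_n^{\operatorname{app}}(z) = n^{1/4}\chi(z)\bigl(\sin^2(z) - z^2\bigr)\phi_k(\sqrt{n}\,z) + \ml{R}_n(z),
$$
where $\ml{R}_n$ gathers the terms containing at least one derivative of $\chi$; these are supported where $|z| \gtrsim 1$ and hence $\ml{O}_k(n^{-\infty})$ in $L^2$ by the Gaussian decay of $\phi_k$ and $\phi_k'$. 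For the main term, the uniform bound $|\sin^2(y) - y^2| \leq C y^4$ on the support of $\chi$ and the same change of variables yield an $L^2$ norm of order $n^{-2}$.

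Putting these estimates together, $\Vert (\widehat{M}_{(n,0)} - (2k+1)/n)u_n^{\operatorname{app}}\Vert_{L^2(\mathbb{S}^1)} = \ml{O}_k(n^{-2})$, and the spectral theorem applied to the selfadjoint operator with compact resolvent $\widehat{M}_{(n,0)}$ produces an eigenvalue $\lambda_k(n) \in \operatorname{Sp}(\widehat{M}_{(n,0)})$ with $|\lambda_k(n) - (2k+1)/n| = \ml{O}_k(n^{-2})$. This is in fact stronger than (and therefore implies) the relative error $\ml{O}_k(n^{-1/2})$ claimed in \eqref{e:eigenvalues_asymptic}. No serious obstacle is expected: the only mild technical points are the bookkeeping of the cutoff error (handled via Gaussian decay) and the elementary Taylor expansion $\sin^2(z) - z^2 = O(z^4)$. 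Issues one might worry about, such as distinctness of the eigenvalues $\lambda_k(n)$ for different $k$ at fixed large $n$, or the tunneling splitting between the two wells at $z=0$ and $z=\pi$, are irrelevant for the stated existence lemma and can be ignored here.
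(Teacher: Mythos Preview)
Your proposal is correct and follows essentially the same quasimode construction as the paper: cut off a rescaled Hermite function near $z=0$, use Gaussian decay to kill the cutoff errors, and Taylor-expand $\sin^2 z$ to control the remainder. The only difference is that you exploit the evenness of $\sin^2 z - z^2$ to get $O(z^4)$ rather than the paper's $O(|z|^3)$, yielding a quasimode error $O_k(n^{-2})$ instead of $O_k(n^{-3/2})$; either suffices for~\eqref{e:eigenvalues_asymptic}.
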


\begin{rema} In principle, there could be other sequences of eigenvalues verifying different asymptotic formulas, say $\lambda_\alpha(n)=\frac{\alpha}{n}+o(n^{-1}).$ Yet, one could show that this is not the case by comparing eigenfunctions of $\widehat{M}_n$ with quasimodes of the harmonic oscillator 
\begin{equation}
\label{e:harmonic_oscillator}
\widehat{H}_n := - \frac{1}{ n^2} \partial_z^2 + z^2,
\end{equation}
on $L^2(\IR)$, whose spectrum is given explicitely by
$$
\operatorname{Sp}_{L^2(\IR)}(\widehat{H}_n) = \left \{ \frac{2k+1}{n } \, : \, k \in \mathbb{N} \right \}.
$$ 
However, since we are only interested in showing the existence of sequences of eigenfunctions of $-h_n^2 \Delta_{\operatorname{sR}}$ which put positive mass on the semiclassical measures $\overline{\mu}_\infty$ and $\mu_{k,\infty}^\pm$, for which we already know the concentration properties \textit{(S.1)} and \textit{(S.2)} of Theorem \ref{t:maintheo}, we omit this discussion.
\end{rema}

\begin{proof}
To prove this lemma, it is sufficient to construct a sequence of (almost) normalized quasimodes $(v_{k,n}, E_k(n))_{n\rightarrow \infty}$ satisfying
$$
\widehat{M}_n v_{k,n} = E_k(n) v_{k,n} + \ml{O}_k\left( \frac{1}{n^{\frac{3}{2}}} \right); \quad E_k(n) = \frac{2k+1}{ n}.
$$
To this aim, let $\delta > 0$ small, and set $v_{k,n}(z) := \chi(z/\delta)\varphi_{k,n}$ where $\chi$ is a cutoff function supported in a neighborhood of $0$ and where 
$$
\varphi_{k,n}(z) = n^{1/4} \varphi_k\big( \sqrt{n}z \big),
$$
and $\varphi_k$ is the normalized Hermite function of degree $k$~\cite[Th.~6.2]{Zworski12}. Observe that, for any $N \geq 1$ and for any $\ell\geq 0$,
\begin{equation}\label{e:exp-decay}
\int_{|z|\geq \delta}\left|\varphi_{k,n}^{(\ell)}(z)\right|^2dz = \mathcal{O}_{\delta,\ell,k,N}\left( \frac{1}{ n^{N/2}} \right), \quad \text{as } \|n\| \to \infty.
\end{equation}
In particular, we get that $\|v_{k,n}\|_{L^2(\IT)}=1+\ml{O}_k(n^{-1})$ as $n\to\infty$ as expected. Moreover, using the Taylor expansion
$$
\sin^2z = z^2 + \mathcal{O}(\vert z \vert^3), \quad \text{as }  z \to 0,
$$
and the fact that
$$
\left(\int_{\IR}|z^3\varphi_{k,n}(z)|^2 dz\right)^{\frac{1}{2}} = \mathcal{O}_k \left( \frac{1}{n^{3/2}} \right), \quad \text{as } n\rightarrow\infty,
$$
the claim holds by using the eigenvalue equation combined with~\eqref{e:exp-decay} and
$$
\widehat{H}_n \varphi_{k,n} = E_{k}(n) \varphi_{k,n}, \quad z \in \IR.
$$
\end{proof}

Let us now fix, for every $n>0$,
\begin{equation}
\label{e:choice_h_n}
h_n := \frac{1}{\sqrt{(2k+1 + o_k(1)) n }}, \quad \text{as }  n  \to \infty,
\end{equation}
so that \eqref{e:Mathieu_equation} and \eqref{e:eigenvalues_asymptic} hold. For this sequence, take a sequence of solutions to \eqref{e:semiclassical-eigenvalue} that are of the form
\begin{equation}
\label{e:eigenfunction_k}
\psi_n(x,y,z) = u_{n}(z)e^{inx}.
\end{equation}

\begin{prop}
\label{p:examples_sequences}
Let $(\psi_n)_{n\geq 1}$ be a normalized sequence of the form \eqref{e:eigenfunction_k}. Let us assume that $(\psi_n)_{n\geq 1}$ satisfy \eqref{e:semiclassical-eigenvalue} with $(h_n)_{n\geq 1}$ given by \eqref{e:choice_h_n}. Then the total mass of $\mu^+_{k,\infty} + \mu^-_{k,\infty}$ is equal to one. 
\end{prop}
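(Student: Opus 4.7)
The plan is to exploit the explicit product structure of $\psi_n(x,y,z)=u_n(z)e^{inx}$ in order to compute $\mu_\infty$ by hand, bypassing the normal-form machinery of Sections~\ref{s:normalform}--\ref{s:invariance}. On the flat torus $\lambda\equiv 0$, so $H_1=\xi\cos z+\eta\sin z$, and the semiclassical Wigner transform of $\psi_n$ factorizes as
\begin{equation*}
W_{h_n}[\psi_n](x,y,z,\xi,\eta,\zeta)=\delta(\xi-h_n n)\,\delta(\eta)\,W^{\IS^1}_{h_n}[u_n](z,\zeta),
\end{equation*}
where $W^{\IS^1}_{h_n}[u_n]$ is the $1$-dimensional Wigner transform of $u_n$. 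Since $h_n n=\sqrt{n/(2k+1)+o_k(1)}\to +\infty$, the microlocal support of $\psi_n$ in the $\xi$ direction escapes to infinity in the semiclassical phase space, so the usual semiclassical measure $w$ of the sequence vanishes on compactly supported test functions of $T^*\ml{M}$. Consequently $\nu_\infty$ equals the quantum limit $\nu$ of $|\psi_n|^2 d\mu_L$ (a probability measure), and by~\eqref{e:disintegration} $\mu_\infty$ is itself a probability measure on $\ml{M}\times\IR$.

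The key computation is then to evaluate $\mu^{R,\varepsilon}_{h_n}(b)$ for $b\in\ml{C}^\infty_c(\ml{U}_0\times\IR)$ via this Wigner factorization. For any fixed $R$ and $\varepsilon$, the cutoffs $\tilde\chi_R^B$ and $\tilde\chi_\varepsilon^C$ are identically $1$ at the support of $\delta(\xi-h_n n)\delta(\eta)$ once $n$ is large enough, hence contribute nothing to the limit. Integrating the two Dirac factors against the symbol $b(x,y,z,h_n(\xi\cos z+\eta\sin z))$ freezes the fourth argument of $b$ at $h_n^2 n\cos z$, which by~\eqref{e:choice_h_n} converges uniformly in $z$ to $(2k+1)^{-1}\cos z$. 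Using the marginal identity $\int W^{\IS^1}_{h_n}[u_n](z,\zeta)\,d\zeta=|u_n(z)|^2$ together with the continuity of $b$ then yields
\begin{equation*}
\langle\mu^{R,\varepsilon}_{h_n},b\rangle=\int_{\IT^3}b\!\left(x,y,z,\tfrac{\cos z}{2k+1}\right)|u_n(z)|^2\,dxdydz+o_n(1).
\end{equation*}

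To finish I will let $n\to\infty$. The Mathieu eigenfunctions $u_n$ satisfy Agmon-type exponential decay outside the two wells $\{z=0,\pi\}$ of $\sin^2 z$, and after rescaling $z\mapsto\sqrt n z$ around each well are asymptotic to linear combinations of Hermite functions as in the proof of Lemma~\ref{l:quasimodes_for_mathieu}. Hence $|u_n(z)|^2\,dz$ converges weakly on $\IS^1$ to a measure of the form $\alpha\delta_0+\beta\delta_\pi$, with $\alpha,\beta\geq 0$ and $\alpha+\beta=(2\pi)^{-2}$ fixed by $\|\psi_n\|_{L^2(\IT^3)}=1$. Substituting gives
\begin{equation*}
\langle\mu_\infty,b\rangle=\alpha\!\int_{\IT^2}\!b(x,y,0,\tfrac{1}{2k+1})\,dxdy+\beta\!\int_{\IT^2}\!b(x,y,\pi,-\tfrac{1}{2k+1})\,dxdy,
\end{equation*}
so $\operatorname{supp}\mu_\infty\subset\mathcal{H}_+^{-1}(2k+1)\cup\mathcal{H}_-^{-1}(2k+1)$. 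The support properties~\textit{(S.\ref{i:support_1})} and~\textit{(S.\ref{i:support_2})} of Theorem~\ref{t:maintheo} then force $\overline{\mu}_\infty=0$ and $\mu^\pm_{k',\infty}=0$ for every $k'\neq k$, so $\mu_\infty=\mu^+_{k,\infty}+\mu^-_{k,\infty}$, whose total mass equals $(2\pi)^2(\alpha+\beta)=1$.

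The only point needing some care is the weak convergence of $|u_n|^2$ for the \emph{true} eigenfunctions (as opposed to the single-well quasimodes of Lemma~\ref{l:quasimodes_for_mathieu}), which follows either from standard Agmon/WKB estimates for $\widehat{M}_n$ or from the observation that the spectral projector onto the cluster of near-degenerate eigenvalues around $(2k+1)/n$ is supported on the combined quasimode subspace. Everything else reduces to the explicit Wigner transform of a plane wave paired with a Dirac mass in $(\xi,\eta)$.
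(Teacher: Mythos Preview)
Your approach is essentially the same as the paper's: reduce to the one-dimensional Mathieu problem on $\IS^1$ and exploit the concentration of $u_n$ near the potential wells $z\in\{0,\pi\}$. However, your claim that ``the cutoffs $\tilde\chi_R^B$ and $\tilde\chi_\varepsilon^C$ are identically $1$ at the support of $\delta(\xi-h_n n)\delta(\eta)$'' is false for $\tilde\chi_\varepsilon^C$. Evaluated at $\xi=h_n n$, $\eta=0$, this cutoff becomes
\begin{equation*}
\tilde\chi\!\left(\frac{\varepsilon\, h_n n\cos z}{\sqrt{1+(h_n n)^2\sin^2 z+\zeta^2}}\right),
\end{equation*}
which vanishes near $z=\pi/2$ (where $\cos z\to 0$) and also for large $|\zeta|$, regardless of how large $n$ is. Hence you cannot drop this factor before invoking the marginal identity in $\zeta$, and the displayed formula for $\langle\mu^{R,\varepsilon}_{h_n},b\rangle$ is not justified as written.

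The paper closes this gap by first establishing phase-space localization of $u_n$ in \emph{both} $z$ and $\zeta$: by functional calculus, $\Op_{1/n}^{\IS^1,w}\big(\chi_\delta(\sin^2 z+\zeta^2)\big)u_n=\chi_\delta(\widehat M_n)u_n+O(1/n)=u_n+O(1/n)$, so $u_n$ is microlocally supported where $\sin^2 z+(\zeta/(h_n n))^2\leq 2\delta$. On that set, with $\delta\ll\varepsilon^2$, the cutoff $\tilde\chi_\varepsilon^C$ is genuinely equal to $1$, and the rest of the argument proceeds exactly as you outline. Your Agmon remark addresses concentration in $z$ only; an additional ingredient such as the energy bound $\|h_n u_n'\|_{L^2}^2=(h_n n)^2\langle\widehat M_n u_n,u_n\rangle=O(1)$ (or the functional-calculus localization above) is needed to control the $\zeta$-tail and make the removal of $\tilde\chi_\varepsilon^C$ legitimate.
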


\begin{proof}
Let us consider $\delta > 0$ to be chosen sufficiently small along the proof. Let $\chi_\delta = \chi(\cdot/\delta)$ where $\chi$ is still a small cutoff function near $0$. We have, by the functional analysis of pesudodifferential operators \cite[Thm. 14.9]{Zworski12}, the localization property:
\begin{align*}
\Op_{\frac{1}{\Vert n \Vert}}^{\mathbb{S}^1,w} \big( \chi_\delta(  \sin^2(z) + \zeta^2 ) \big)   u_n & =  \chi_{\delta}(  \widehat{M}_n)  u_n + O_\delta\left( \frac{1}{ n} \right) \\[0.2cm]
 & = \chi_\delta\left(  \frac{2k+1}{n}(1+o_k(1))\right) u_n + O_\delta\left( \frac{1}{n } \right) \\[0.2cm]
 & = u_n + O_{\delta,k}\left( \frac{1}{ n } \right).
\end{align*}
On the other hand, we have:
\begin{align*}
\big \langle \Op_{h_n}^w( \chi(h_n H_1) \tilde{\chi}_\varepsilon^C \tilde{\chi}_R^B ) \psi_n, \psi_n \big \rangle_{L^2(\IT^3)} = \big \langle \Op_{h_n}^{\mathbb{S}^1,w} (\kappa_n^{\varepsilon,R}) u_n,u_n \big \rangle_{L^2(\mathbb{S}^1)},
\end{align*}
where
$$
\kappa_n^{\varepsilon,R}(z,\zeta) := \chi(h_n^2 n \cos(z) )   \tilde{\chi} \left( \frac{ \varepsilon h_n n \cos(z) }{\sqrt{1 +  (h_n n)^2\sin^2(z) + \zeta^2}} \right) \tilde{\chi} \left( \frac{ (h_n n )^2 + \zeta^2}{R} \right).
$$
Arguing as in Section~\ref{s:cutoff}, one finds that this symbol belongs to the class of symbols $S^0_{\text{cl}}(T^*\IS^1)$ amenable to pseudodifferential calculus on the circle. Observe also that, for $n$ large enough, the last function in this product is identically equal to $1$ Notice also that
\begin{align*}
\Op_{\frac{1}{\Vert n \Vert}}^{\mathbb{S}^1,w} \big( \chi_\delta(  (\sin^2(z) + \zeta^2 )) \big)   & = \Op_{h_n}^{\mathbb{S}^1,w} \left( \chi_\delta\left(  \sin^2(z) + \left( \frac{\zeta}{h_n  n} \right)^2 \right) \right) \\[0.2cm]
& =: \Op_{h_n}^{\mathbb{S}^1,w} ( \sigma_n^\delta ).
\end{align*}
Thus, by using the previous localization property for $u_n$ and the semiclassical pseudodifferential calculus, we have the composition formula:
\begin{align*}
 \big \langle \Op_{h_n}^{\mathbb{S}^1,w} (\kappa_n^{\varepsilon,R}) u_n,u_n \big \rangle_{L^2(\mathbb{S}^1)} & =  \big \langle \Op_{h_n}^{\mathbb{S}^1,w} (\kappa_n^{\varepsilon,R}) \Op_{h_n}^{\mathbb{S}^1,w} \big( \sigma_n^\delta \big) u_n,u_n \big \rangle_{L^2(\mathbb{S}^1)} + \mathcal{O}_{\delta,k}\left( \frac{1}{ n } \right) \\[0.2cm]
  & =  \big \langle \Op_{h_n}^{\mathbb{S}^1,w} (\kappa_n^{\varepsilon,R}  \sigma_n^\delta \big) u_n,u_n \big \rangle_{L^2(\mathbb{S}^1)} + \mathcal{O}_{\delta,\varepsilon,R}\left( h_n \right).
\end{align*}
In this expression, if we take $\delta$ sufficiently small (i.e. so that $\delta \ll\varepsilon^2$), we have 
$$
\kappa_n^{\varepsilon,R}(z,\zeta)  \sigma_n^\delta(z,\zeta) =   \chi(h_n^2  n  \cos(z)) \sigma_n^\delta(z,\zeta),
$$ 
since $\tilde{\chi}(x) = 1$ for $\vert x \vert \geq 2$. Moreover, for $\delta > 0$ sufficiently small, we can also decompose $\sigma_n^\delta$ as the sum of two functions $\sigma_n^{1,\delta}$ and $\sigma_n^{2,\delta}$ compactly supported respectively near $z = 0$ and $z = \pi$, that is:
$$
\sigma_n^\delta(z,\zeta) = \sigma_n^{1,\delta}(z,\zeta) + \sigma_n^{2,\delta}(z,\zeta),
$$
with $\supp \sigma_n^{1,\delta} \cap \supp \sigma_n^{1,\delta} = \emptyset$. Using next that
$$
h_n^2 n  \cos(z) = \frac{(-1)^{j-1}}{2k+1} + \mathcal{O}(\delta), \quad \text{as } \delta \to 0, \quad j = 1,2,
$$
respectively on the support of $\sigma_n^{j,\delta}(z,\zeta)$, we get thanks to the Calder\'on-Vaillancourt Theorem:
\begin{align*}
 \big \langle \Op_{h_n}^{\mathbb{S}^1,w} (  \kappa_n^{\varepsilon,R}) u_n,u_n \big \rangle_{L^2(\mathbb{S}^1)} & \\[0.2cm]
  & \hspace*{-3cm} = \sum_{j\in \{1,2 \}}  \chi\left( \frac{(-1)^{j-1}}{2k+1} \right) \big \langle \Op_{h_n}^{\mathbb{S}^1,w} (  \sigma_n^{j,\delta} ) u_n,u_n \big \rangle_{L^2(\mathbb{S}^1)} + \mathcal{O}(\delta) +  \mathcal{O}_{\delta,\varepsilon,R}\left( h_n \right).
\end{align*}
Therefore, taking limits in $ n  \to + \infty$ through a subsequence, we obtain, for $\delta \ll \varepsilon^2$,
$$
 \lim_{h_n \to 0}\big \langle \Op_{h_n}^w( \chi(h_n H_1) \tilde{\chi}_\varepsilon^C \tilde{\chi}_R^B ) \psi_n, \psi_n \big \rangle_{L^2(\mathcal{M})} =  \alpha_1^\delta \chi \left( \frac{1}{2k+1} \right) + \alpha^\delta_2\chi\left(  \frac{-1}{2k+1} \right) + \mathcal{O}(\delta),
$$
where $\alpha_j^\delta = \lim_{h_n \to 0^+} \big \langle \Op_{h_n}^{\mathbb{T}^1,w} (  \sigma_n^{j,\delta} ) u_n,u_n \big \rangle_{L^2(\mathbb{T}^1)}$, and $\alpha_1^\delta + \alpha_2^\delta = 1$ (using one more time the localization property of the sequence $(u_n)_{n\geq 1}$). Finally, in view of the fact that
$$
\lim_{\varepsilon \to 0} \lim_{R \to \infty} \lim_{h_n \to 0}\big \langle \Op_{h_n}^w( \chi(h_n H_1) \tilde{\chi}_\varepsilon^C \tilde{\chi}_R^B ) \psi_n, \psi_n \big \rangle_{L^2(\IT^3)} = \int_{\IT^3\times \IR} \chi(E) \, d\mu_\infty(q,E),
$$ 
we can take $\delta \to 0$ and use Theorem \ref{t:maintheo} (property \textit{(S.2)}) to conclude the proof.
\end{proof}

We finally show the existence of sequences of eigenfunctions $(\psi_n)$ satisfying \eqref{e:semiclassical-eigenvalue} which put positive mass on the semiclassical measure $\overline{\mu}_\infty$. To this aim, we note that, thanks to Lemma~\ref{l:quasimodes_for_mathieu} and for every $k\geq 1$, we can find some $n_k\geq 1$ such that, for every $n\geq n_k$, there is an eigenvalue $E_k(n)$ of $\widehat{M}_{(n,0)}$ verifying
$$\frac{1}{2\sqrt{(2k+1)n}}\leq E_k(n)=\frac{1}{(h_nn)^2}\leq\frac{\sqrt{2}}{\sqrt{(2k+1)n}}\leq \frac{1}{\sqrt{k}}.$$
Hence, we can take $n=n_k$ and pick a sequence $(K_{n_k})_{k\geq 1}$ such that $K_{n_k} \to + \infty$ and thus the sequence $(h_{n_k})_{k\geq 1}$ satisfying now
\begin{equation}
\label{e:choice_h_n_2}
\frac{1}{n_k}\ll h_{n_k} = \frac{1}{\sqrt{K_{n_k} n_k }}\ll \frac{1}{\sqrt{n_k}}, \quad \text{as }  k \to \infty.
 \end{equation}
Adapting the proof of Proposition \ref{p:examples_sequences} and using property \textit{(S.1)} of Theorem \ref{t:maintheo}, we obtain:
\begin{coro}
\label{c:examples_sequences}
 Let $(\psi_{n_k})_{k\geq 1}$ be a normalized sequence of the form \eqref{e:eigenfunction_k}. Let us assume that $\psi_{n_k}$ satisfy \eqref{e:semiclassical-eigenvalue} with $h_{n_k}$ given by \eqref{e:choice_h_n_2}. Then the total mass of $\overline{\mu}_\infty$ is equal to one.
\end{coro}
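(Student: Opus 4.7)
The strategy is to adapt the argument of Proposition~\ref{p:examples_sequences} to the new scaling. The crucial feature now is that
$$h_{n_k}^2 n_k = \frac{1}{K_{n_k}} \longrightarrow 0 \quad \text{while} \quad h_{n_k} n_k = \sqrt{\frac{n_k}{K_{n_k}}} \longrightarrow +\infty,$$
so that the sequence lies in the subcritical subelliptic regime $1 \ll |H_1| \ll h^{-1}$, rather than in the critical one $h|H_1|\asymp 1$ exploited in Proposition~\ref{p:examples_sequences}. In view of property \textit{(S.1)} of Theorem~\ref{t:maintheo}, my plan is to show that for every fixed $\delta>0$,
$$
\int_{\IT^3\times\IR} \chi(E/\delta)\,d\mu_\infty(q,E) = 1.
$$
Taking $\delta\to 0^+$ and applying the dominated convergence theorem together with the support decomposition from Theorem~\ref{t:maintheo} (the measures $\mu_{k,\infty}^\pm$ live on $\{E=\pm 1/(2k+1)\}$, which shrinks to $\{0\}$ as $k\to\infty$ and whose contribution to the integral is killed by $\chi(E/\delta)$ for $\delta$ small enough at each finite $k$) will then yield $\overline{\mu}_\infty(\IT^3\times\{0\}) = 1$.

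To establish the identity above, I would repeat the reduction used in Proposition~\ref{p:examples_sequences}: using that $\psi_{n_k}$ is a Fourier mode $u_{n_k}(z)e^{in_k x}$ in the $(x,y)$ variables, one rewrites
$$\big\langle \Op_{h_{n_k}}^w\big( \chi(h_{n_k}H_1/\delta) \tilde{\chi}_\varepsilon^C \tilde{\chi}_R^B \big)\psi_{n_k},\psi_{n_k}\big\rangle_{L^2(\IT^3)} = \big\langle \Op_{h_{n_k}}^{\IS^1,w}(\kappa_{n_k}^{\varepsilon,R,\delta})u_{n_k},u_{n_k}\big\rangle_{L^2(\IS^1)} + \ml{O}_{\varepsilon,R,\delta}(h_{n_k}),$$
with
$$
\kappa_{n_k}^{\varepsilon,R,\delta}(z,\zeta) := \chi\!\left( \frac{h_{n_k}^2 n_k \cos z}{\delta} \right) \tilde{\chi}\!\left( \frac{\varepsilon h_{n_k} n_k \cos z}{\sqrt{1+(h_{n_k}n_k)^2\sin^2 z + \zeta^2}} \right) \tilde{\chi}\!\left( \frac{(h_{n_k}n_k)^2+\zeta^2}{R} \right).
$$
Combined with the Mathieu localization $\Op_{1/n_k}^{\IS^1,w}(\chi_{\delta'}(\sin^2 z+\zeta^2))u_{n_k} = u_{n_k} + \ml{O}_{\delta'}(1/n_k)$ (which follows from $\widehat{M}_{n_k}u_{n_k}=E_k(n_k)u_{n_k}$ with $E_k(n_k)\to 0$), I then need to verify that each of the three cutoffs in $\kappa_{n_k}^{\varepsilon,R,\delta}$ converges to the constant $1$ on the microlocal support $\{\sin^2 z+ \zeta^2/(h_{n_k}n_k)^2 \leq \delta'\}$ of $u_{n_k}$ (expressed in the $h_{n_k}$-scale). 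The first cutoff tends to $\chi(0)=1$ because $h_{n_k}^2 n_k\to 0$; the third tends to $\tilde{\chi}(\infty)=1$ because $(h_{n_k}n_k)^2\to\infty$; the only nontrivial one is the middle cutoff, for which the choice $\delta'\lesssim \varepsilon^2$ (together with $|\cos z|\asymp 1$ on the support and $h_{n_k}n_k\to\infty$) ensures that it is identically equal to $1$ for $k$ large.

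Applying pseudodifferential composition and the Calder\'on--Vaillancourt theorem as in Proposition~\ref{p:examples_sequences}, this implies that the pairing above tends to $\Vert u_{n_k}\Vert_{L^2(\IS^1)}^2 = 1$ as $k\to\infty$, for any fixed $\varepsilon>0$, $R>1$ and $\delta>0$, from which the corollary follows. The main (mild) technical obstacle is the simultaneous choice of the auxiliary Mathieu-localization width $\delta'$ as a function of $\varepsilon$, and the justification that this localization survives composition with $\Op_{h_{n_k}}^{\IS^1,w}(\kappa_{n_k}^{\varepsilon,R,\delta})$. Both points are routine given uniform seminorm estimates on $\kappa_{n_k}^{\varepsilon,R,\delta}$ analogous to those of Lemma~\ref{l:symbol}, which allow one to control the composition error by $\ml{O}_{\varepsilon,R,\delta}(h_{n_k})$.
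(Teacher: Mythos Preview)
Your proposal is correct and is precisely the adaptation that the paper has in mind (the paper gives no details beyond ``adapting the proof of Proposition~\ref{p:examples_sequences} and using property \textit{(S.1)} of Theorem~\ref{t:maintheo}''). Your identification of the three regimes for the cutoffs in $\kappa_{n_k}^{\varepsilon,R,\delta}$ is the right computation, and the auxiliary Mathieu localization works exactly as you say because the Mathieu eigenvalue $E_k(n_k)=1/(h_{n_k}n_k)^2$ tends to $0$ under the scaling~\eqref{e:choice_h_n_2}. One small simplification: once you have shown $\int_{\IT^3\times\IR}\chi(E/\delta)\,d\mu_\infty=1$ for every $\delta>0$, you can conclude directly (without invoking the layers $\mu_{k,\infty}^\pm$) that $\mu_\infty$ is a probability measure supported in $\{|E|\le 2\delta\}$ for all $\delta$, hence in $\{E=0\}$, which already gives $\overline{\mu}_\infty=\mu_\infty$.
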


\appendix

\section{Spectral properties of $\widehat{P}_h$}
\label{a:spectral}

In this appendix, we briefly review the spectral properties of $\widehat{P}_h$. A key ingredient of the analysis is the following standard result~\cite[Cor.~17.14]{RothschildStein76}:
\begin{theo}[Rothschild-Stein]\label{t:RothschildStein} Let $Q\in\ml{C}^\infty(\ml{M},\IR)$ such that $\|Q\|_{\ml{C}^0}<1$. Set 
$$\mathcal{L}=-\Delta_{\text{sR}}-iQX=-X_\perp^2-V^2-iQ[V,X_\perp].$$

Then, for every $N\geq 1$, one can find continuous maps $P_N:H^s\rightarrow H^{1+s}$ and $S_N:H^s\rightarrow H^{s+\frac{N}{2}}$ (for all $s\geq 0$) such that 
$$P_N\mathcal{L}=\operatorname{Id}+S_N.$$ 

In particular, there exists a constant $C_{M,g}>0$ such that
\begin{equation}\label{e:RothschildStein}
\forall \psi\in\mathcal{C}^\infty(\ml{M}),\quad\|\psi\|_{H^1}\leq C_{M,g}\left(\|\mathcal{L}\psi\|_{L^2}+\|\psi\|_{L^2}\right),
 \end{equation} 
and, for every $s\geq 0$,
$$\mathcal{L}(\psi)\in H^s,\quad\psi\in L^2\ \Longrightarrow\ \psi\in H^{s+1}.$$
\end{theo}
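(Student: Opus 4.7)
The plan is to split the statement into two parts: first, the \emph{a priori} subelliptic estimate~\eqref{e:RothschildStein}, which I would prove by hand using the condition $\|Q\|_{\ml{C}^0}<1$; second, the full parametrix construction $P_N\ml{L}=\operatorname{Id}+S_N$, for which I would reduce to the flat Heisenberg model and invoke the standard sub-Riemannian calculus.

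For the $H^1$ estimate, the starting point is Hörmander's theorem applied to $-\Delta_{\text{sR}}=-X_\perp^2-V^2$: since $X=[V,X_\perp]$, the triple $(X,X_\perp,V)$ spans $T\ml{M}$, giving the ellipticity--type bound
\[
\|\psi\|_{H^1}^2\leq C\bigl(\|X_\perp\psi\|_{L^2}^2+\|V\psi\|_{L^2}^2+\|\psi\|_{L^2}^2\bigr).
\]
Writing $\ml{L}=-X_\perp^2-V^2-iQ[V,X_\perp]$ and using that $X_\perp^*=-X_\perp$, $V^*=-V$ (the frame is divergence-free for $d\mu_L$), I would integrate by parts in
\[
\la\ml{L}\psi,\psi\ra=\|X_\perp\psi\|^2+\|V\psi\|^2-i\la Q(VX_\perp-X_\perp V)\psi,\psi\ra
\]
so that, after a short computation, the cross terms combine as $-2\Im\la X_\perp\psi,QV\psi\ra$ (the other pieces produce only factors of $V(Q)$ and $X_\perp(Q)$, controlled by $\|\psi\|(\|X_\perp\psi\|+\|V\psi\|)$). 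Cauchy--Schwarz on the main term gives $|\Im\la X_\perp\psi,QV\psi\ra|\leq \tfrac{\|Q\|_{\ml{C}^0}}{2}(\|X_\perp\psi\|^2+\|V\psi\|^2)$, so that
\[
\Re\la\ml{L}\psi,\psi\ra\geq(1-\|Q\|_{\ml{C}^0})(\|X_\perp\psi\|^2+\|V\psi\|^2)-C_\eps\|\psi\|^2-\eps(\|X_\perp\psi\|^2+\|V\psi\|^2).
\]
Choosing $\eps$ small and using the subelliptic bound above yields~\eqref{e:RothschildStein}. This is where the hypothesis $\|Q\|_{\ml{C}^0}<1$ enters in an essential way: it ensures the quadratic form remains coercive.

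For the parametrix part, my strategy would be to use that $\ml{L}$ is locally modeled on the sub-Laplacian $-\tilde X_\perp^2-\tilde V^2-iQ(m_0)[\tilde V,\tilde X_\perp]$ on the Heisenberg group $\IH_1$, where $(\tilde X_\perp,\tilde V)$ are left-invariant vector fields realizing the contact structure at a base point $m_0\in\ml{M}$. On $\IH_1$, Folland--Stein's fundamental solution provides an explicit homogeneous parametrix $P_0^{\IH}$ of type $-1$ (in the sub-Riemannian sense, i.e.\ gaining one step-weighted derivative). I would then build a global $P_0$ on $\ml{M}$ by patching local Heisenberg parametrices with a partition of unity, so that $P_0\ml{L}=\operatorname{Id}+S_0$ with $S_0$ of subelliptic order $-1/2$; writing $P_N=P_0(\operatorname{Id}-S_0+\cdots+(-1)^{N-1}S_0^{N-1})$ gives $P_N\ml{L}=\operatorname{Id}+(-1)^N S_0^N$, and the mapping properties follow from composition in the Folland--Stein/Rothschild--Stein class of operators.

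The main obstacle is justifying that this patching works uniformly despite the variable coefficients $Q(m)$ and the fact that the brackets $[X,X_\perp]=-KV$, $[V,X]=-X_\perp$ do not vanish: the osculating Heisenberg algebra differs from the true bracket structure by lower-(Hörmander)-order terms, and one has to show these produce only admissible perturbations of the parametrix. Once this is controlled, the hypoelliptic regularity statement ``$\ml{L}\psi\in H^s\Rightarrow\psi\in H^{s+1}$'' is obtained by an easy bootstrap: apply $P_N$ (with $N\geq 2s+2$) to $\ml{L}\psi=f$, so that $\psi=P_Nf-S_N\psi$, where $P_Nf\in H^{s+1}$ and $S_N\psi\in H^{N/2}$, and iterate the elliptic estimate~\eqref{e:RothschildStein} along a standard Friedrichs mollifier argument to upgrade regularity by one subelliptic step at a time.
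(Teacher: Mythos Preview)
The paper does not actually prove this theorem: it is stated as a standard result and attributed to \cite[Cor.~17.14]{RothschildStein76}, so there is no ``paper's own proof'' to compare against. Your parametrix sketch via the osculating Heisenberg model is essentially the Rothschild--Stein approach, and as an outline it is fine.

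There is, however, a genuine gap in your derivation of~\eqref{e:RothschildStein}. The inequality you invoke,
\[
\|\psi\|_{H^1}^2\leq C\bigl(\|X_\perp\psi\|_{L^2}^2+\|V\psi\|_{L^2}^2+\|\psi\|_{L^2}^2\bigr),
\]
is false: it would force $\|X\psi\|\leq C(\|X_\perp\psi\|+\|V\psi\|+\|\psi\|)$, which contradicts the fact that $X=[V,X_\perp]$ is one derivative ``harder'' than $X_\perp$ and $V$. For a step-2 H\"ormander system, the sharp first-order subelliptic estimate only gives $\|\psi\|_{H^{1/2}}$ on the left-hand side. Your coercivity computation (which does use $\|Q\|_{\ml{C}^0}<1$ correctly, and mirrors the quadratic-form bounds the paper writes out before Lemma~\ref{l:spectrum} and in the proof of Lemma~\ref{l:apriori-estimate}) therefore yields only
\[
\|\psi\|_{H^{1/2}}\leq C\bigl(\|\ml{L}\psi\|_{L^2}+\|\psi\|_{L^2}\bigr),
\]
not the claimed $H^1$ bound. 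To recover the full $H^1$ gain you really need the parametrix (or the Rothschild--Stein estimate for the unperturbed $-\Delta_{\text{sR}}$) as input, and then a perturbation argument; the coercivity of $\Re\la\ml{L}\psi,\psi\ra$ alone is not enough. In other words, the two halves of your plan are not independent: the $H^1$ estimate already requires the machinery you defer to the second half.
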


Let us now discuss the spectral properties of $\widehat{P}_h$. For an introduction on the spectral properties of unbounded operators, the reader is referred to~\cite[Ch.~VIII]{ReedSimon80} and~\cite[Ch.~X]{ReedSimon75} that we closely follow for the terminology. For any $\psi\in H^2(\ml{M})$, we define
$$\tilde{P}_h(\psi):=\left(-h^2\Delta_{\text{sR}}+\frac{h^2}{2i}(QX-(QX)^*)+W\right)\psi,$$
which induces an unbounded operator
$$\tilde{P}_h:D(\tilde{P}_h):=H^2(\mathcal{M})\subset L^2(\ml{M})\rightarrow  L^2(\ml{M}).$$
One can define its adjoint $\tilde{P}_h^*$ by defining the domain
$$D(\tilde{P}_h^*):=\left\{\psi\in L^2(\ml{M}):\ \exists u\in L^2(\ml{M})\ \text{such that}\ \forall \varphi\in H^2(\ml{M}),\ \langle\psi,\tilde{P}_h\varphi\rangle=\langle u,\varphi\rangle\right\},$$
or equivalently
$$D(\tilde{P}_h^*):=\left\{\psi\in L^2(\ml{M}):\ \tilde{P}_h\psi\in L^2(\ml{M})\right\}$$
The operator $\tilde{P}_h^*:\psi\in D(\tilde{P}_h^*)\rightarrow \tilde{P}_h\psi \in L^2(\ml{M})$ is closed and it is densely defined. Hence, according to~\cite[Th.~VIII.1]{ReedSimon80}, $\tilde{P}_h$ is closable and we denote its closure by $\overline{\tilde{P}}_h$ whose domain is denoted by $D\left(\overline{\tilde{P}}_h\right)$ and equal to the set of $\psi\in L^2(\ml{M})$ such that
$$\exists \psi_j\in H^2(\ml{M}),\ \exists v\in L^2(\ml{M})\ \text{such that}\  \|\psi_j-\psi\|_{L^2}+\|\tilde{P}_h\psi_j-v\|_{L^2}\rightarrow 0.
$$
In general, one only has $D\left(\overline{\tilde{P}}_h^*\right)= D(\tilde{P}_h^*)\subset D\left(\overline{\tilde{P}_h}\right)$ so that $\overline{\tilde{P}}_h$ is not necessarily selfadjoint. In order to fix this problem, we can make some assumptions on the size of $Q$ and use positivity arguments.

More precisely, $\tilde{P}_{h}$ is associated with the real quadratic form
$$\tilde{B}(\psi):=\int_{\ml{M}}(\tilde{P}_h\psi)\overline{\psi} \, d\mu_L,\ \psi\in H^2(\ml{M}),$$
which, thanks to~\eqref{e:commutators}, is bounded from below by
\begin{align*}\tilde{B}(\psi) & \geq\|hX_\perp\psi\|^2_{L^2} +\|hV\psi\|^2_{L^2}-2\|Q\|_{\ml{C}^0}\|hX_\perp\psi\|_{L^2}\|hV\psi\|_{L^2} \\[0.2cm]
 & \quad +(\min W-h^2\|Q\|_{\ml{C}^1})\|\psi\|^2_{L^2}\\[0.2cm]
& \quad -h\|Q\|_{\ml{C}^1}\|\psi\|_{L^2}\big(\|hX_\perp\psi\|_{L^2}+\|hV\psi\|_{L^2}\big)\\[0.2cm]
& \geq \left(1-\|Q\|_{\ml{C}^0}-\frac{h\|Q\|_{\ml{C}^1}}{2}\right) \left(\|hX_\perp\psi\|^2_{L^2}+\|hV\psi\|^2_{L^2}\right)\\[0.2cm]
& \quad +\left(\min W-(h^2+h)\|Q\|_{\ml{C}^1}\right)\|\psi\|^2_{L^2}.
\end{align*}
Hence, if $\|Q\|_{\ml{C}^0}<1$ (and $h>0$ is small enough in a way that depends on $Q$), it follows from~\cite[Th.~X.23]{ReedSimon75} that $\tilde{B}$ is a closable form whose closure $B$ corresponds to a unique selfadjoint operator $\widehat{P}_h$ referred as the \emph{Friedrichs extension} of $\tilde{P}_h$. Moreover, the spectrum of this selfadjoint extension is bounded from below by $\min W+\ml{O}_Q(h)$ and its domain verifies
$$D(\widehat{P}_h)\subset H^1_{\text{sR}}(\ml{M}):=\left\{u\in\ml{D}'(\ml{M}): \|\psi\|_{L^2}^2+\|X_\perp\psi\|_{L^2}^2+\|V\psi\|^2_{L^2}<\infty\right\}.$$
In particular, $\widehat{P}_h:D(\widehat{P}_h)\subset L^2(\ml{M})\rightarrow L^2(\ml{M})$ is a closed selfadjoint operator and thus $(\widehat{P}_h+C)$ has a bounded inverse for $C>0$ large enough:
$$\big(\widehat{P}_h+C\big)^{-1}:L^2(\ml{M})\rightarrow (D(\widehat{P}_h),\|.\|_{L^2})\subset L^2(\ml{M}).$$
We would like to show that this defines a compact operator. To see this, recall from~\eqref{e:RothschildStein} that
$$\forall\psi\in\ml{C}^{\infty}(\ml{M}),\quad \|(\widehat{P}_h+C)^{-1}\psi\|_{H^1(\ml{M})}\leq c_h\big(\|\psi\|_{L^2}+\|(\widehat{P}_h+C)^{-1}\psi\|_{L^2}\big)$$
so that, if $(\psi_j)_{j\geq 0}$ is a bounded sequence in $L^2(\ml{M})$, then $((\widehat{P}_h+C)^{-1}\psi_j)_{j\geq 0}$ is also bounded in $H^1(\ml{M})$. 
\begin{rema}\label{r:domain} Along the way, this discussion shows that $H^2(\ml{M})\subset D(\widehat{P}_h)\subset H^1(\ml{M})$ (with continuous inclusions).
\end{rema}

As the inclusion $H^1(\ml{M})\subset L^2(\ml{M})$ is compact, $(\widehat{P}_h+C)^{-1}:L^2(\ml{M})\rightarrow L^2(\ml{M})$ is indeed a compact operator. As $\widehat{P}_h$ is selfadjoint, there exists an orthonormal basis of $L^2(\ml{M})$ made of eigenmodes of $\widehat{P}_h$. Moreover, if one has $\widehat{P}_h\psi_h=\lambda_h\psi_h$ with $\psi_h\in D(\widehat{P}_h)$, then $\mathcal{L}\psi_h\in H^1(\ml{M})$ and, according to Theorem~\ref{t:RothschildStein}, one finds that $\psi_h\in H^2(\ml{M})$. By induction, we get that these eigenmodes are smooth.

 \begin{rema} If we let $C>0$ be a large enough constant, then $\tilde{P}_h+C$ is a positive symmetric operator and its adjoint is given by $\tilde{P}_h^*+C$ with domain $D(\tilde{P}_h^*)$. In particular, if $\psi$ belongs to the kernel of $\tilde{P}_h^*+C$, then, by the Rothschild-Stein Theorem, $\psi$ belongs to $H^1(\mathcal{M})$ (and by induction to $\mathcal{C}^{\infty}(\ml{M})$. Hence, it lies in the domain of $\widehat{P}_h$ and we can deduce that $\psi=0$. According to~\cite[Th.~X.26]{ReedSimon75}, it implies that the Friedrichs extension is the only (semibounded) selfadjoint extension of $\tilde{P}_h+C$ (hence of $\tilde{P}_h$). 
 \end{rema}

The spectral properties of $\widehat{P}_h$ that we have proved so far are summarized by the next statement:
\begin{lemm}\label{l:spectrum}  Suppose that $\|Q\|_{\ml{C}^0}<1$. Then, there exists $h_0>0$ such that, for every $0<h<h_0$, 
$$\widehat{P}_h:D(\widehat{P}_h)\rightarrow L^2(\ml{M})$$
is a selfadjoint operator whose spectrum consists in a discrete sequence of eigenvalues
$$\min W+\ml{O}_Q(h)\leq \lambda_h(0)\leq\lambda_h(1)\leq\ldots\leq \lambda_h(j)\ldots\rightarrow +\infty.$$
Moreover,
$$\widehat{P}_h\psi_h=\lambda_h\psi_h,\ \text{with}\ \psi_h\in D(-\widehat{P}_h)\quad\Longrightarrow\quad \psi_h\in \ml{C}^\infty(\ml{M}).$$
\end{lemm}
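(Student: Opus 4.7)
The plan is to assemble the statement from the pieces that have already been set up in the discussion preceding the lemma, treating $\widehat{P}_h$ as the Friedrichs extension of the symmetric operator $\tilde{P}_h$ acting on $H^2(\mathcal{M})$ and then extracting discreteness from the compactness of the resolvent via the Rothschild--Stein subelliptic estimate \eqref{e:RothschildStein}.

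First I would carefully verify the lower bound of the quadratic form $\tilde{B}(\psi) = \langle \tilde{P}_h\psi,\psi\rangle_{L^2}$ on $H^2(\mathcal{M})$. An integration by parts using the commutator relations \eqref{e:commutators} yields
\begin{equation*}
 \tilde{B}(\psi) \geq \left(1-\|Q\|_{\mathcal{C}^0}-\tfrac{h\|Q\|_{\mathcal{C}^1}}{2}\right)\left(\|hX_\perp\psi\|_{L^2}^2+\|hV\psi\|_{L^2}^2\right) + \big(\min W-C_Q h\big)\|\psi\|_{L^2}^2,
\end{equation*}
as displayed in the introduction. Since $\|Q\|_{\mathcal{C}^0}<1$, for $h>0$ small enough (depending only on $Q$) the prefactor of the subelliptic term is positive and $\tilde{B}$ is semibounded from below by $\min W+\mathcal{O}_Q(h)$. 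By \cite[Th.~X.23]{ReedSimon75}, $\tilde{B}$ is closable and its closure $B$ determines a unique selfadjoint operator $\widehat{P}_h$, the Friedrichs extension of $\tilde{P}_h$, with form domain contained in $H^1_{\mathrm{sR}}(\mathcal{M})$. The asserted lower bound on $\mathrm{Sp}(\widehat{P}_h)$ then follows from the spectral theorem applied to this form lower bound.

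Next, I would pick $C>0$ large enough so that $\widehat{P}_h+C\geq 1$, giving a bounded inverse $(\widehat{P}_h+C)^{-1}\colon L^2(\mathcal{M})\to L^2(\mathcal{M})$ whose range lies in $D(\widehat{P}_h)$. The main step is to promote this $L^2$-boundedness to an $H^1$-boundedness using Theorem~\ref{t:RothschildStein}. Writing $\widehat{P}_h = h^2\mathcal{L} + W + \text{(zeroth-order lower-order terms from } X(Q)\text{)}$, for any $u\in D(\widehat{P}_h)\subset H^1(\mathcal{M})$ one has $\mathcal{L}u = h^{-2}(\widehat{P}_h u - Wu) + \text{l.o.t.}\in L^2(\mathcal{M})$, so \eqref{e:RothschildStein} applies and gives
\begin{equation*}
\|(\widehat{P}_h+C)^{-1}\psi\|_{H^1} \leq c_h\left(\|\psi\|_{L^2} + \|(\widehat{P}_h+C)^{-1}\psi\|_{L^2}\right) \leq c_h'\,\|\psi\|_{L^2}.
\end{equation*}
Composing with the compact Sobolev embedding $H^1(\mathcal{M})\hookrightarrow L^2(\mathcal{M})$ shows $(\widehat{P}_h+C)^{-1}$ is a compact selfadjoint operator on $L^2(\mathcal{M})$. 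The spectral theorem for compact selfadjoint operators then produces an orthonormal basis of eigenfunctions of $\widehat{P}_h$ with eigenvalues $\lambda_h(j)\to +\infty$, proving the discreteness and ordering stated in the lemma.

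Finally, for the smoothness claim I would bootstrap using the hypoelliptic regularity built into Theorem~\ref{t:RothschildStein}. If $\widehat{P}_h\psi_h = \lambda_h\psi_h$ with $\psi_h\in D(\widehat{P}_h)\subset H^1(\mathcal{M})$, then $\mathcal{L}\psi_h = h^{-2}(\lambda_h - W)\psi_h - \text{(first-order term in } Q\text{)}\in H^{s}$ whenever $\psi_h\in H^{s+1}$; the hypoellipticity statement in Theorem~\ref{t:RothschildStein} (via the map $S_N$ gaining arbitrarily many derivatives) then upgrades $\psi_h$ to $H^{s+2}$, and an induction on $s$ together with Sobolev embedding yields $\psi_h\in\mathcal{C}^\infty(\mathcal{M})$. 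The main technical care needed is at the very first step, to ensure that $D(\widehat{P}_h)\subset H^1(\mathcal{M})$ so that the iteration can start and that the first-order perturbation $h^2 QX$ is absorbed correctly on each pass; this is already guaranteed by Remark~\ref{r:domain} and the form-domain inclusion obtained above.
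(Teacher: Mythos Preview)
Your proposal is correct and follows essentially the same approach as the paper: the lemma is stated there as a summary of the preceding discussion, and you have reproduced precisely that argument (semiboundedness of the form, Friedrichs extension, compactness of the resolvent via the Rothschild--Stein estimate and the Rellich embedding, then hypoelliptic bootstrap for smoothness). One small inaccuracy: in the bootstrap you refer to a ``first-order term in $Q$'', but in fact $\widehat{P}_h = h^2\mathcal{L} - \tfrac{ih^2}{2}X(Q) + W$, so after isolating $\mathcal{L}\psi_h$ the right-hand side involves only multiplication by smooth functions, which makes the induction step even cleaner than you indicate.
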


We conclude this appendix with the following a priori estimates that are used all along the article:
\begin{lemm}\label{l:apriori-estimate} Suppose that $\|Q\|_{\ml{C}^0}<1$. Then, one can find $C_{Q,W}>0$ and $0<h_Q\leq 1$, such that, for all $0<h\leq h_Q$,
 \begin{multline*}\widehat{P}_h\psi_h=\lambda_h\psi_h,\ \text{with}\ \psi_h\in D(\widehat{P}_h)\\
 \Longrightarrow\quad \|hX_\perp\psi_h\|_{L^2}^2+\|hV\psi_h\|_{L^2}^2+\|h^2X\psi_h\|_{L^2}^2\leq C_{Y,W}(1+|\lambda_h|)^2\|\psi_h\|_{L^2}^2.
\end{multline*}
\end{lemm}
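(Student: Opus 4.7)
The plan is to split the estimate into two steps: first bounding $\|hX_\perp\psi_h\|^2+\|hV\psi_h\|^2$ via the quadratic form of $\widehat{P}_h$ on its eigenfunction, and then deriving the bound on $\|h^2X\psi_h\|$ by invoking the Rothschild--Stein estimate~\eqref{e:RothschildStein} applied to the hypoelliptic operator $\mathcal{L}=-\Delta_{\operatorname{sR}}-iQX$.

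For the first step, since $\widehat{P}_h$ is selfadjoint and $\lambda_h\in\IR$, taking real parts in $\langle \widehat{P}_h\psi_h,\psi_h\rangle=\lambda_h\|\psi_h\|^2$ and using $V^*=-V$, $X_\perp^*=-X_\perp$ (divergence-free frame) gives
\[
\|hX_\perp\psi_h\|^2+\|hV\psi_h\|^2=\lambda_h\|\psi_h\|^2-\langle W\psi_h,\psi_h\rangle-h^2\operatorname{Im}\langle X\psi_h,Q\psi_h\rangle,
\]
the contribution of $-\frac{ih^2}{2}X(Q)$ dropping out because $\langle X(Q)\psi_h,\psi_h\rangle\in\IR$. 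To control the last term, I would write $X=[V,X_\perp]$ and integrate by parts: the two ``pure $Q$-coupling'' contributions combine into $2i\int Q\operatorname{Im}(V\psi_h\overline{X_\perp\psi_h})d\mu_L$, whose modulus is bounded by $2\|Q\|_{\ml{C}^0}\|hV\psi_h\|\|hX_\perp\psi_h\|/h^2$, while the remaining pieces involving $V(Q)$ and $X_\perp(Q)$ contribute at most $h\|Q\|_{\ml{C}^1}(\|hV\psi_h\|+\|hX_\perp\psi_h\|)\|\psi_h\|/h^2$. Under the standing hypothesis $\|Q\|_{\ml{C}^0}<1$, the dominant coupling is absorbed into the left-hand side via $2ab\leq a^2+b^2$ (leaving a positive coefficient $1-\|Q\|_{\ml{C}^0}$ on the LHS), while the lower-order term is handled by AM--GM together with the condition $h\leq h_Q$ small enough. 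This yields $\|hX_\perp\psi_h\|^2+\|hV\psi_h\|^2\leq C_{Q,W}(1+|\lambda_h|)\|\psi_h\|^2$.

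For the second step, rather than tackling the subelliptic commutator algebra for $\|h^2X\psi_h\|$ directly (which tends to produce tautologies), I plan to plug the eigenvalue equation into the Rothschild--Stein a priori estimate. Writing $h^2\mathcal{L}=\widehat{P}_h-W+\frac{ih^2}{2}X(Q)$, the eigenvalue equation reads $h^2\mathcal{L}\psi_h=(\lambda_h-W)\psi_h+\frac{ih^2}{2}X(Q)\psi_h$, hence $\|\mathcal{L}\psi_h\|_{L^2}\leq C_{Q,W}h^{-2}(1+|\lambda_h|)\|\psi_h\|_{L^2}$. Since $\|X\psi_h\|_{L^2}\leq\|\psi_h\|_{H^1}$, inequality~\eqref{e:RothschildStein} then yields $\|h^2X\psi_h\|_{L^2}\leq C_{Q,W}(1+|\lambda_h|)\|\psi_h\|_{L^2}$, which after squaring combines with the output of the first step to give the announced bound of order $C_{Q,W}(1+|\lambda_h|)^2\|\psi_h\|^2$.

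The hard part is really the first step, where the absorption argument has to succeed using only the $\ml{C}^0$-bound $\|Q\|_{\ml{C}^0}<1$ (the same hypothesis that ensures hypoellipticity and selfadjointness of $\widehat{P}_h$). The coupling constant $2\|Q\|_{\ml{C}^0}$ is exactly at the critical threshold for $2ab\leq a^2+b^2$, and it is essential that the integration-by-parts contributions involving $V$ and $X_\perp$ conspire to give exactly this coefficient; any worse constant would require a strictly smaller bound on $Q$. The Rothschild--Stein step is by contrast entirely mechanical, the constant in~\eqref{e:RothschildStein} being $h$-independent.
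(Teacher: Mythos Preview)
Your proposal is correct and follows essentially the same route as the paper's proof: the first step uses the quadratic form identity together with $X=[V,X_\perp]$ and the absorption via $\|Q\|_{\ml{C}^0}<1$, and the second step feeds the eigenvalue equation into the Rothschild--Stein estimate~\eqref{e:RothschildStein}. The only cosmetic difference is that you take real parts to drop the $-\frac{ih^2}{2}X(Q)$ contribution outright, whereas the paper keeps it and bounds it by an extra $\frac{h^2}{2}\|Q\|_{\ml{C}^1}\|\psi_h\|^2$; both lead to the same conclusion.
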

\begin{proof} Let $\psi_h\in D(\widehat{P}_h)$ such that $\widehat{P}_h=\lambda_h\psi_h$. One has then
$$\|hX_\perp\psi_h\|_{L^2}^2+\|hV\psi_h\|_{L^2}^2=\lambda_h\|\psi_h\|_{L^2}^2-\langle W\psi_h,\psi_h\rangle-\frac{h^2}{i}\langle(QX+\frac{1}{2}X(Q))\psi_h,\psi_h\rangle.$$
Hence, one has
$$\|hX_\perp\psi_h\|_{L^2}^2+\|hV\psi_h\|_{L^2}^2\leq (\|W\|_{\ml{C}^0}+|\lambda_h|)\|\psi_h\|_{L^2}^2+h^2|\langle(QX+\frac{1}{2}X(Q))\psi_h,\psi_h\rangle|.$$
Recall that $X=[V,X_\perp]$ from which we infer
\begin{multline*}|\langle(QX+\frac{1}{2}X(Q))\psi_h,\psi_h\rangle|\leq 2\|Q\|_{\ml{C}^0}\|hX_\perp\psi_h\|_{L^2}\|hV\psi_h\|_{L^2}\\
+ h\|Q\|_{\ml{C}^1}\left(\|hX_\perp\psi_h\|_{L^2}+ \|hV\psi_h\|_{L^2}\right)\|\psi_h\|_{L^2}
+ \frac{h^2}{2}\|Q\|_{\ml{C}^1}\|\psi_h\|_{L^2}^2.\end{multline*}
Then, we get
$$\|hX_\perp\psi_h\|_{L^2}^2+\|hV\psi_h\|_{L^2}^2\leq \frac{1}{1-\|Q\|_{\ml{C}^0}-\frac{h\|Q\|_{\ml{C}^1}}{2}}(\|W\|_{\ml{C}^0}+|\lambda_h|+2h\|Q\|_{\ml{C}^1})\|\psi_h\|_{L^2}^2.$$
Hence, under the assumption that $\|Q\|_{\ml{C}^0}$, there exists a constant $C_{Q,W}>0$ (depending only $Q$ and $W$) and $0<h_Q\leq 1$ (depending only on $Q$) such that, for every $0<h\leq h_Q$,
\begin{equation}\label{e:bound-apriori-proof}\|hX_\perp\psi_h\|_{L^2}^2+\|hV\psi_h\|_{L^2}^2\leq C_{Q,W}(1+|\lambda_h|)\|\psi_h\|_{L^2}^2.\end{equation}
Finally, using the Rothschild-Stein Theorem one more time, one finds that there exists a constant $C_{M,g}>0$ such that
$$\|X\psi_h\|_{L^2}\leq \|\psi_h \|_{H^1}\leq C_{M,g}\left(\|\mathcal{L}\psi_h\|_{L^2}+\|\psi_h\|_{L^2}\right).$$
Multiplying this inequality by $h^2$ and using the fact that $\widehat{P}_h\psi_h=\lambda_h\psi_h$ to control the upper bound in terms of $\|\psi_h\|_{L^2}$, we obtain the expected upper bound.
\end{proof}

\section{Reminder on semiclassical analysis on $\IR^2\times\IS^1$}\label{a:pdo}

In this appendix, we review a few facts about semiclassical analysis on $T^*(\IR^2\times\IS^1)$ that are used all along our analysis of the measure at infinity. A standard textbook is~\cite{Zworski12} which treats the case of $T^*\IR^3$ in great details in Chapter~$4$. The case of $T^*(\IR^2\times\IS^1)$ can be handled similarly by proper use of Fourier series along the $z$-variable rather than Fourier transform. See for instance~\cite[\S 5.3]{Zworski12} for a detailed discussion in the case of $T^*\IT^3$.

For a nice enough smooth function $a$ on $T^*(\IR^2\times\IS^1)$ (say compactly supported) and for every $h>0$, the Weyl (semiclassical) quantization of $a$ is defined, for all $u$ in $u\in\ml{C}^\infty_c(\IR^3)$, by
\begin{equation}\label{e:weyl} \text{Op}_h^w(a)\left(u\right)(q):=\frac{1}{(2\pi h)^3}\int_{\IR^6}e^{\frac{i}{h}(q-q')\cdot p}a\left( \frac{q+q'}{2},p\right)u(q')dq'dp.
\end{equation}
Using the periodicity along the $\mathbb{S}^1$-variable, one can verify that this definition extends to smooth test functions $u\in\ml{C}^\infty_c(\IR^2\times\IS^1)$~\cite[\S5.3.1]{Zworski12}.

Regarding the regularity needed for $a$, this definition still makes sense when working with smooth functions $a$ belonging to the class of (Kohn-Nirenberg) symbols~\cite[\S 9.3]{Zworski12}:
$$S^m_{\text{cl}}(T^*(\IR^2\times\IS^1))=\left\{a\in\ml{C}^\infty(T^*(\IR^2\times\IS^1)):\ \forall(\alpha,\beta)\in\IZ_+^{6},\ P_{m,\alpha,\beta}(a)<+\infty\right\},\quad m\in\IR,$$
where
$$P_{m,\alpha,\beta}(a):=\sup_{(q,p)}\{\langle p\rangle^{-m+|\beta|}|\partial_q^\alpha\partial_p^\beta a(x,\xi)|\}.$$
In other words, we gain some decay in $p$ when differentiating in the $p$-variable. Even if such a decay is not necessary to work in an Euclidean set-up, it is of crucial importance in our analysis to have this extra decay in view of dealing with the escape at infinity in the fibers.

A nice property of the Weyl quantization is that, for a real-valued $a$, $\Op_h^w(a)$ is a (formally) selfadjoint operator~\cite[Th.~4.1]{Zworski12}. Another property that we extensively use all along this article is the composition rule for pseudodifferential operators\footnote{Technically speaking, this reference deals with the Weyl quantization on $T^*\IR^3$ but the proof works as well in our set-up.}~\cite[Th.~9.5, Th.~4.12]{Zworski12}
\begin{theo}\label{t:composition} Let $a\in S^{m_1}_{\text{cl}}(T^*(\IR^2\times\IS^1))$ and $b\in S^{m_2}_{\text{cl}}(T^*(\IR^2\times\IS^1))$. Then, there exists $c\in S^{m_1+m_2}_{\text{cl}}(T^*(\IR^2\times\IS^1))$ (depending on $h$) such that
\begin{equation}\label{e:composition-formula}\Op_h^w(a)\circ\Op_h^w(b)=\Op_h^w(c).\end{equation}
Moreover,
$$c(q,p)=\sum_{k=0}^N\frac{h^k}{k!}\left(A(D)\right)^k(a(q_1,p_1)b(q_2,p_2))|_{q_1=q_2=q, p_1=p_2=p}+\ml{O}_{S^{m_1+m_2-N-1}}(h^{N+1}),$$
 where the constant in the remainder depends on a finite number of seminorms of $a$ and $b$ (depending on $N$ and on the seminorm in $S^{m_1+m_2-N-1}$), and where 
 $$A(D):=\frac{1}{2i}\left(\partial_{p_1}\cdot\partial_{q_2}-\partial_{p_2}\cdot\partial_{q_1}\right).$$
\end{theo}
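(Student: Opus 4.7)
The plan is to derive the composition formula by direct computation on Schwartz kernels, then recognize an oscillatory integral whose stationary-phase expansion yields the stated asymptotic expansion. First, starting from the defining integral~\eqref{e:weyl}, I would compute the Schwartz kernel of $\Op_h^w(a)\circ\Op_h^w(b)$ acting on test functions $u\in \mathcal{C}^\infty_c(\IR^2\times\IS^1)$, viewed as $2\pi\IZ$-periodic smooth functions on $\IR^3$. Writing the composition as a quadruple integral in $(q',q'',p',p'')$, I would perform the change of variables $y=q''-q$, $y'=q'-q''$ and integrate out one of the cotangent variables to identify the kernel in the form
$$ K_h(q,q')=\frac{1}{(2\pi h)^3}\int_{\IR^3} e^{\frac{i}{h}(q-q')\cdot p}\,c(q,q',p)\,dp,$$
thereby recognizing a candidate symbol for the composition $c$. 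Passing to Weyl form via the midpoint $\frac{q+q'}{2}$ then gives the standard Moyal-type representation
$$ c(q,p)=\frac{1}{(\pi h)^6}\int_{\IR^{12}} e^{-\frac{2i}{h}\sigma((y_1,\eta_1),(y_2,\eta_2))} a(q+y_1,p+\eta_1)b(q+y_2,p+\eta_2)\,dy_1\,d\eta_1\,dy_2\,d\eta_2,$$
where $\sigma$ is the canonical symplectic form on $T^*(\IR^2\times\IS^1)$; the periodicity in the $z$-variable is handled via Fourier-series decomposition, which replaces the $\zeta$-integral by a discrete sum and produces no new difficulty since the phase is linear in the periodic variable.

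Next, I would rewrite this oscillatory integral in operator form using Fourier inversion: the bilinear map $(a,b)\mapsto c$ is precisely
$$ c(q,p)=\exp\!\Bigl(\tfrac{ih}{2}\sigma(D_{q_1,p_1},D_{q_2,p_2})\Bigr)\bigl(a(q_1,p_1)\,b(q_2,p_2)\bigr)\bigr|_{q_1=q_2=q,\,p_1=p_2=p},$$
where $\sigma(D_1,D_2)=D_{p_1}\!\cdot D_{q_2}-D_{p_2}\!\cdot D_{q_1}$. Identifying $\frac{ih}{2}\sigma(D_1,D_2)=h\,A(D)$ with $A(D)$ as in the statement, Taylor's formula applied to $e^{tA(D)}$ at $t=h$ up to order $N$ produces the main asymptotic series $\sum_{k=0}^N \frac{h^k}{k!}A(D)^k(a\otimes b)\bigr|_{\mathrm{diag}}$ and an explicit integral remainder
$$ R_N(q,p)=\frac{h^{N+1}}{N!}\int_0^1(1-t)^N \exp\!\bigl(thA(D)\bigr)\bigl(A(D)^{N+1}(a\otimes b)\bigr)\,dt\bigr|_{\mathrm{diag}}.$$

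The main obstacle is then the estimate $R_N=\mathcal{O}_{S^{m_1+m_2-N-1}}(h^{N+1})$, since the symbols are not compactly supported and the formal integrals only converge as oscillatory integrals. I would handle this by applying the non-stationary phase technique: on the Fourier-side representation, the operator $\exp(thA(D))$ is realized as an oscillatory integral with quadratic phase, and one inserts the standard regularizing identity
$$ \bigl(1+|y_1|^2+|y_2|^2+|\eta_1|^2+|\eta_2|^2\bigr)^{-M}\cdot L^M=\mathrm{Id},$$
where $L$ is a suitable second-order differential operator that preserves the phase. After $M$ integrations by parts one obtains absolutely convergent integrals whose integrand carries enough decay to permit differentiation in $(q,p)$ and to verify the symbol estimates $P_{m_1+m_2-N-1,\alpha,\beta}(R_N)\le C_{N,\alpha,\beta}\,h^{N+1}$ on each seminorm, using that $A(D)^{N+1}(a\otimes b)$ already gains $N+1$ powers of $\langle p\rangle^{-1}$ from the $p$-derivatives in $A(D)$.

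Finally, the claim that $c\in S^{m_1+m_2}_{\mathrm{cl}}$ follows by combining the explicit bound on each term $A(D)^k(a\otimes b)|_{\mathrm{diag}}$ (which lies in $S^{m_1+m_2-k}_{\mathrm{cl}}$ by the chain rule and the decay properties built into the symbol class) with the remainder estimate above for $N=0$. Periodicity in $z$ presents no additional difficulty once the Fourier-series representation in the $z$-variable is used, because all manipulations respect the $2\pi\IZ$-invariance; the proof thus reduces to the classical $T^*\IR^3$ statement from~\cite[Th.~4.12]{Zworski12}.
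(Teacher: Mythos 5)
Your proposal is correct and takes essentially the same route as the paper, which does not prove this statement itself but cites the textbook result \cite[Th.~4.12, Th.~9.5]{Zworski12} with a footnote that the $\IS^1$ factor is handled by Fourier series in $z$. Your argument (kernel computation, Moyal integral representation, $\exp(hA(D))$ with Taylor expansion and integral remainder, non-stationary phase/integration by parts to get the $S^{m_1+m_2-N-1}$ remainder bounds, and Fourier series for periodicity) is precisely that standard proof.
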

In particular, we can see from this result that $c=\ml{O}_{S^{m_1+m_2-N-1}}(h^{N+1})$ if $a$ and $b$ have disjoint supports. We can also verify that, all the even powers in $h$ in the asymptotic expansion of $[\Op_h^w(a),\Op_h^w(b)]$ cancels out and that the first term is given by $\frac{h}{i}\{a,b\}.$

Another key property for us is the Calder\'on-Vaillancourt Theorem~\cite[Ch.~5]{Zworski12} that states the existence of constants $C_0, N_0$ such that, for every $a\in S^{0}_{\text{cl}}(T^*(\IR^2\times\IS^1))$,
\begin{equation}\label{e:calderon}
 \left\|\Op_h^w(a)\right\|_{L^2\rightarrow L^2}\leq C_0\sum_{|\alpha|\leq N_0}h^{\frac{|\alpha|}{2}}\|\partial^\alpha a\|_{\infty}.
\end{equation}
Recall also the Garding property that is valid for elements in $S^{0}_{\text{cl}}(T^*(\IR^2\times\IS^1))$. Given any $a$ in that class satisfying $a\geq 0$, it ensures the existence of a constant $C_a>0$~\cite[Th.~4.32]{Zworski12} such that
\begin{equation}\label{e:Garding0}\forall u\in L^2(\IR^2\times\IS^1),\quad\langle \Op_h^w(a)u,u\rangle\geq -C_a h\|u\|_{L^2}^2.
 \end{equation}

\bibliographystyle{alpha}
\bibliography{allbiblio}

\end{document}